\newtheorem{theorem}{Theorem}[section]
\newtheorem{lemma}[theorem]{Lemma}
\newtheorem{definition}[theorem]{Definition}
\newtheorem{corollary}[theorem]{Corollary}
\newtheorem{conjecture}[theorem]{Conjecture}
\newtheorem{proposition}[theorem]{Proposition}
\newtheorem{example}[theorem]{Example}
\theoremstyle{remark}
\newtheorem{remark}[theorem]{Remark}
\newtheoremstyle{TheoremNum}
        { 1em}{1 em}              
        {\itshape}                      
        {}                              
        {\bfseries}                     
        {.}                             
        { 0.5 em}                             
        {\thmname{#1}\thmnote{ \bfseries #3}}
    \theoremstyle{TheoremNum}
    \newtheorem{thmn}{Theorem}
 \newtheoremstyle{TheoremNum}
        { 1em}{1 em}              
        {\itshape}                      
        {}                              
        {\bfseries}                     
        {.}                             
        { 0.5 em}                             
        {\thmname{#1}\thmnote{ \bfseries #3}}
    \theoremstyle{TheoremNum}
    \newtheorem{coroln}{Corollary}
     \newtheoremstyle{TheoremNum}
        { 1em}{1 em}              
        {\itshape}                      
        {}                              
        {\bfseries}                     
        {.}                             
        { 0.5 em}                             
        {\thmname{#1}\thmnote{ \bfseries #3}}
    \theoremstyle{TheoremNum}
\newcommand{\T}{\mathscr{T}}
\newcommand{\C}{\mathbb{C}}
\numberwithin{equation}{section}
\begin{document}

\bibliographystyle{amsalpha}

\title{On a conjecture of Voisin on the gonality of very general abelian varieties}

\thanks{The author acknowledges the support of the Natural Sciences and Engineering Research Council of Canada (NSERC). Cette recherche est partiellement financ\'ee par le Conseil de recherches en sciences naturelles et en g\'enie du Canada (CRSNG)}
\author{Olivier Martin}
\email{olivier@uchicago.edu}

\address{Department of Mathematics, University of Chicago, Chicago, IL, USA}


\maketitle

\begin{abstract}
We study orbits for rational equivalence of zero-cycles on very general abelian varieties by adapting a method of Voisin to powers of abelian varieties. We deduce that, for $k$ at least $3$, a very general abelian variety of dimension at least $2k-2$ has covering gonality greater than $k$. This settles a conjecture of Voisin. We also discuss how upper bounds for the dimension of orbits for rational equivalence can be used to provide new lower bounds on other measures of irrationality. In particular, we obtain a strengthening of the Alzati-Pirola bound on the degree of irrationality of abelian varieties.
\end{abstract}

\section{Introduction}
\vspace{0.5em}
In his seminal 1969 paper \cite{M}, Mumford shows that the Chow group of zero-cycles of a smooth complex projective surface with $p_g\neq 0$ is very large. Building on the work of Mumford, in \cite{R1} and \cite{R2} Ro\u{\i}tman studies the map
\begin{align*}\text{Sym}^k (X)&\to CH_0(X)\\
x_1+\ldots+x_k&\mapsto \{x_1\}+\ldots+\{x_k\},
\end{align*}
for a smooth complex projective variety $X$. He shows that fibers of this map are countable unions of Zariski closed subsets. We call orbits of degree $k$ for rational equivalence the fibers of both the map $X^k\to CH_0(X)$ and the map $\text{Sym}^k X\to CH_0(X)$. The orbit of $\{x_1\}+\ldots+\{x_k\}\in CH_0(X)$ is the fiber of $\text{Sym}^k X\to CH_0(X)$ or $X^k\to CH_0(X)$ over this point and is denoted by $|\{x_1\}+\ldots+\{x_k\}|$. Ro\u{\i}tman defines birational invariants $d(X)$ and $j(X)\in \mathbb{Z}_{\geq 0}$ such that for $k\gg 0$ the minimal dimension of orbits of degree $k$ for rational equivalence is $k(\dim X-d(X))-j(X)$. He generalizes Mumford's theorem by showing that
$$H^0(X,\Omega^q)\neq 0\implies d(X)\geq q.$$
In particular, if $X$ has a holomorphic top form, a very general point $x_1+\ldots+x_k\in \text{Sym}^kX$ is contained in a zero-dimensional orbit.\\

Abelian varieties are among the simplest examples of varieties admitting a holomorphic top form. In this article we will focus our attention on this example and take a point of view different from the one mentioned above. Instead of fixing an abelian variety $A$ and considering the minimal dimension of orbits of degree $k$ for rational equivalence, we will be interested in the maximal dimension of such orbits for a very general abelian variety $A$ of dimension $g$ with a given polarization type.\\

This perspective has already been studied by Pirola, Alzati-Pirola, and Voisin (see respectively \cite{P},\cite{AP},\cite{V}) with a view towards the minimal gonality of curves on very general abelian varieties. The story begins with \cite{P} in which Pirola shows that curves of geometric genus less than $\dim A-1$ are rigid in the Kummer variety of a very general abelian variety $A$. This allows him to show: 
\begin{theorem}[Pirola]\label{P}
A very general abelian variety of dimension at least $3$ does not have positive dimensional orbits of degree $2$. In particular, it does not admit a nonconstant morphism from a hyperelliptic curve.
\end{theorem}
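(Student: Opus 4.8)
The plan is to argue by contradiction, converting a positive-dimensional orbit of degree $2$ into a curve of small geometric genus in the Kummer variety $\mathrm{Kum}(A)=A/\{\pm1\}$, in contradiction with Pirola's rigidity theorem. First I would record that orbits of degree $1$ are single points: if $\{a\}=\{b\}$ in $CH_0(A)$, applying the Albanese map $CH_0(A)_{\deg0}\to A$ to $\{a\}-\{b\}$ gives $a=b$. Now suppose $Z\subseteq\Sym^2 A$ is an irreducible positive-dimensional component of an orbit of degree $2$, and pull it back to the $\Z/2$-invariant subvariety $\widetilde Z\subseteq A\times A$. The fibers of $\mathrm{pr}_1\colon\widetilde Z\to A$ are orbits of degree $1$, hence points, so $\mathrm{pr}_1$ is birational onto a positive-dimensional subvariety $V\subseteq A$ (positive-dimensional since $\widetilde Z$ is and $\mathrm{pr}_1(\widetilde Z)=\mathrm{pr}_2(\widetilde Z)$ by symmetry) and produces a birational involution $\psi\colon V\dashrightarrow V$ with $\{v\}+\{\psi(v)\}$ constant in $CH_0(A)$. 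Taking Albanese images shows $v+\psi(v)=s_0$ is constant, so $\psi$ is the restriction of the involution $x\mapsto s_0-x$ of $A$; conjugating by a translation (multiplication by $2$ on $A$ is surjective, so $s_0$ has a halving) I may assume $s_0=0$, so that $V=-V$ and $\{v\}+\{-v\}$ is constant on $V$.

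Next I would pass to the Kummer variety. The morphism $\Sym^2 A\to A\times\mathrm{Kum}(A)$ sending $\{x,y\}$ to $\bigl(x+y,\ \overline{x-y}\bigr)$, where the bar denotes the class in $\mathrm{Kum}(A)$, is finite; on $Z$ the first coordinate is constant (it is the Albanese image of the common class), so $Z$ maps finitely onto a positive-dimensional subvariety $W\subseteq\mathrm{Kum}(A)$, namely the image of $[2]V$. The crucial point is to read off from $W$, using the symmetry of $V$ and the rational equivalence, a curve — or non-trivial family of curves — of geometric genus $<\dim A-1$ in $\mathrm{Kum}(A)$. The prototype is the hyperelliptic case: given a non-constant $\phi\colon H\to A$ with $H$ hyperelliptic, of involution $\iota$, the fibers of the $g^1_2$ are linearly equivalent on $H$, hence push forward to rationally equivalent $0$-cycles $\phi(p)+\phi(\iota p)$ on $A$; taking Albanese images forces this sum to be a constant $s_0$, so after translating $A$ one has $\phi\circ\iota=[-1]\circ\phi$, and $\phi$ descends to a non-constant morphism $\proj^1=H/\iota\to\mathrm{Kum}(A)$ — a rational curve, of genus $0<\dim A-1$. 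The general positive-dimensional orbit should likewise produce curves in $\mathrm{Kum}(A)$ of geometric genus below $\dim A-1$.

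Finally, since $\dim A\ge3$ gives $\dim A-1\ge2$, Pirola's rigidity theorem applies to these curves and excludes the configuration for very general $A$, a contradiction; hence $A$ has no positive-dimensional orbit of degree $2$. The last assertion is then immediate: by the computation above, a non-constant morphism from a hyperelliptic curve produces, via its $g^1_2$, a non-constant curve in $\Sym^2 A$ contracted by the map to $CH_0(A)$ (it is non-constant because $\phi$ is), i.e.\ a positive-dimensional orbit of degree $2$. I expect the main obstacle to be the genus control in the previous paragraph: the curves one naively extracts from $Z$ are isomorphic to curves on $V\subseteq A$ and so can have large genus, and one must instead exploit the rational-equivalence structure — that any two cycles in $Z$ are joined by a pencil — to bring their geometric genus below $\dim A-1$; this, together with the precise way Pirola's rigidity is leveraged to pass from "rigid" to "non-existent", is where the real work lies.
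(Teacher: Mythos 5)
A preliminary remark: the paper does not prove this statement — Theorem \ref{P} is quoted from Pirola's article \cite{P}, whose real input is the rigidity of curves of geometric genus $<\dim A-1$ in the Kummer variety of a very general $A$. So your proposal must be measured against Pirola's original argument. Your preparatory steps are essentially sound: the fibers of $\mathrm{pr}_1$ on $\widetilde Z$ are degree-one orbits, hence finite (with the $\Q$-coefficients used in this paper, $\{a\}=\{b\}$ only forces $a-b$ to be torsion, so "single points" should be "finite sets" — harmless); the involution is $x\mapsto s_0-x$ by the Albanese argument; the normalization $s_0=0$ by translation is legitimate; and the "in particular" direction (a nonconstant map from a hyperelliptic curve yields a positive-dimensional degree-two orbit via its $g^1_2$) is correct.

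The genuine gap is precisely the one you flag and do not close: you never produce a curve of geometric genus $<\dim A-1$ in $\mathrm{Kum}(A)$, and the curve $W=\overline{[2]V}$ cannot serve, since its genus is uncontrolled. The missing idea is to extract the low-genus curve not from the orbit but from the \emph{witness} of the rational equivalence: if $z_0\neq z_1$ are rationally equivalent effective cycles of degree $2$ on $A$, the standard Mumford--Ro\u{\i}tman lemma produces a curve $\Gamma$ mapping to $A$ together with a pencil of degree-two divisors on $\Gamma$ interpolating between $z_0$ and $z_1$; since $A$ contains no rational curves, $\Gamma$ must be irreducible with a $g^1_2$, i.e.\ hyperelliptic in the broad sense, and then your own Albanese computation shows $\phi\circ\iota=s_0-\phi$, so $\Gamma/\iota\cong\proj^1$ maps nonconstantly to $A/(x\mapsto s_0-x)\cong\mathrm{Kum}(A)$. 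This is how genus $0<\dim A-1$ enters once $\dim A\geq 3$. You also leave open the passage from "rigid" to "nonexistent": a single such rational curve generates, via the translation action of $A$ (and the choice of square roots of $s_0$), a positive-dimensional family of rational curves in the Kummer varieties of the family, contradicting rigidity for very general $A$. Without these two steps the proposal is a correct framing of the problem but not a proof.
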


There are several ways in which one might hope to generalize this result. For instance, one can ask for the covering gonality of a very general abelian variety of dimension $g$. Given a variety $X$, the covering gonality $\text{cov.gon}(X)$ is the the smallest positive integer $g$ such that a generic point of $X$ is contained in a $g$-gonal curve in $X$. It is a birational invariant that measures how far $X$ is from being uniruled and its significance was observed in \cite{BPEL}. We will call gonality of $X$ the minimal gonality of curves in $X$. For an abelian variety $A$, these two notions coincide since one can use the action of $A$ on itself by translation to produce a covering family of $k$-gonal curves from a single $k$-gonal curve in $A$.\\

Now observe that an irreducible curve $C\subset X$ with gonality $k$ provides us with a positive dimensional orbit in $\text{Sym}^k C$, and thus with a positive dimensional orbit in $\text{Sym}^k X$. Hence, one can give a lower bound on the gonality of $X$ by giving a lower bound on the degree of positive dimensional orbits. This suggests to consider the function
$$\mathscr{G}(k):=\text{inf}\begin{cases}g\in \mathbb{Z}_{>0}\bigg|\begin{rcases} \text{a very general abelian variety of dimension } g \text{ does }\\ \text{ not have a positive dimensional orbit of degree } k\end{rcases},\end{cases} $$
and to attempt to bound it from above. Indeed, a very general abelian variety of dimension at least $\mathscr{G}(k)$ must have gonality at least $k+1$. Note that, given a positive dimensional orbit $|\{a_1\}+\ldots+\{a_k\}|$, the orbit $|\{a_1\}+\ldots+\{a_k\}+\{0_A\}|$ is also positive dimensional. It follows that $\mathscr{G}(k)$ is non-decreasing.\\

In this direction, a few years after the publication of \cite{P}, Alzati and Pirola improved on Pirola's results in \cite{AP}, showing that a very general abelian variety $A$ of dimension at least $4$ does not have positive dimensional orbits of degree $3$, i.e.\;that $\mathscr{G}(3)\leq 4$. This suggest that, for any $k\in \mathbb{Z}_{>0}$, a very general abelian variety of sufficiently large dimension does not admit a positive dimensional orbit of degree $k$, i.e.\;that $\mathscr{G}(k)$ is finite for any $k\in \mathbb{Z}_{>0}$. This problem was posed in \cite{BPEL} and answered positively by Voisin in \cite{V}.

\begin{theorem}[Voisin, Thm. 1.1 in \cite{V}]\label{Vmainthm}
A very general abelian variety of dimension at least $2^{k-2}(2k-1)+(2^{k-2}-1)(k-2)$ does not have a positive dimensional orbit of degree $k$, i.e. $\mathscr{G}(k)\leq 2^{k-2}(2k-1)+(2^{k-2}-1)(k-2).$
\end{theorem}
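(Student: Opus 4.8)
The plan is to prove Theorem~\ref{Vmainthm}, i.e.\ that $\mathscr{G}(k)\le f(k):=2^{k-2}(2k-1)+(2^{k-2}-1)(k-2)$, by induction on $k\ge 2$, the base case $k=2$ being Pirola's Theorem~\ref{P}, which gives $\mathscr{G}(2)\le 3=f(2)$. It is convenient to record that, after expansion, $f(k)=3(k-1)2^{k-2}-(k-2)$, so that $f(k)=2f(k-1)+3\cdot 2^{k-2}+(k-4)$; this is the recursion the argument must reproduce. Assuming $\mathscr{G}(k-1)\le f(k-1)$, suppose for contradiction that a very general abelian variety $A$ of dimension $g\ge f(k)$ carries a positive-dimensional orbit of degree $k$.

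First I would spread the orbit out over moduli. By Ro\u{\i}tman's theorem the orbits of degree $k$ in $\Sym^k A$ form a countable union of Zariski-closed subsets; combining this with the standing hypothesis via a routine countability argument over the moduli space $\mathcal{A}_g$ of abelian varieties of dimension $g$ yields, on a dense open $U\subseteq\mathcal{A}_g$, an algebraic family $\mathcal{O}\to U$ such that for $[A]\in U$ the fiber $\mathcal{O}_{[A]}\subset\Sym^k A$ is a positive-dimensional orbit of degree $k$. One may then replace ``very general'' by ``general in $U$'' and regard the orbit $\mathcal{O}_{[A]}$, together with all the auxiliary geometry attached to it (in particular the curves carrying the functions whose divisors witness the relevant rational equivalences), as defined in families over $U$.

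The crux is a degree-reduction step of the form: \emph{if a general abelian variety of dimension $g$ has a positive-dimensional orbit of degree $k$, then a general abelian variety of some dimension $g'$ with $g\le 2g'+3\cdot 2^{k-2}+(k-4)$ has a positive-dimensional orbit of degree $k-1$}. To obtain it I would pick a curve $C\subseteq\mathcal{O}_{[A]}$ through a general point, giving over $C$ a non-isotrivial family $z_t=\{x_1(t)\}+\cdots+\{x_k(t)\}$ of effective degree-$k$ cycles with $z_t\sim z_0$ for rational equivalence; since a single point of $A$ moves in no nontrivial family for rational equivalence, some component $x_i(t)$ genuinely varies, and one can try to ``peel it off'', passing to the degree-$(k-1)$ cycles $z_t-\{x_i(t)\}$ and absorbing the discrepancy — which is controlled by the degree-$0$ cycle $\{x_i(t)\}-\{x_i(0)\}$, itself expressible via divisors of functions on a family of curves $\Gamma_t\subset A$ that sweep out a positive-dimensional subvariety — into the geometry of those curves. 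The rigidity of low geometric genus curves in the Kummer variety $K(A)$, the Hodge-theoretic mechanism already behind Theorem~\ref{P}, should then constrain this auxiliary family strongly enough to force the surviving degree-$(k-1)$ orbit onto a \emph{general} abelian variety of dimension $g'$ as large as the recursion demands; the loss of roughly half the dimension reflects the cost of retaining enough of the original equivalence to keep the peeled-off family genuinely in play. Carrying this out, and in particular pinning down the constant so that $g\ge f(k)$ forces $g'\ge f(k-1)$, is the technical heart of the proof.

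Granting the degree-reduction step, the induction closes at once: a positive-dimensional orbit of degree $k$ on a very general abelian variety of dimension $g\ge f(k)$ forces one of degree $k-1$ on a very general abelian variety of dimension $g'\ge f(k-1)$ (using $f(k)=2f(k-1)+3\cdot 2^{k-2}+(k-4)$), contradicting the inductive hypothesis; hence $\mathscr{G}(k)\le f(k)$. I expect the degree-reduction step to be the main obstacle, and within it two points to require genuine care: first, that the surviving degree-$(k-1)$ orbit is still truly \emph{positive-dimensional} — a priori the discrepancy one absorbs could collapse the family — and second, more delicate, that the abelian variety carrying it can be taken \emph{general} in its own moduli rather than some special member. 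Controlling both is precisely the role of the Hodge-theoretic rigidity, and it is also what forces the factor of $2$, and hence the exponential shape of $f(k)$, into the final bound.
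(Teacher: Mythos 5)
This theorem is not proved in the paper at all: it is Voisin's Theorem 1.1, quoted from \cite{V}, and the introduction only sketches her argument. Measured against that sketch, your proposal takes a genuinely different route, and its central step is a gap rather than a proof. You correctly reverse-engineer the recursion $f(k)=2f(k-1)+3\cdot 2^{k-2}+(k-4)$ from the closed formula, but the ``degree-reduction step'' that is supposed to realize it --- from a positive-dimensional orbit of degree $k$ on a general $g$-fold, produce a positive-dimensional orbit of degree $k-1$ on a general $g'$-fold with $g\le 2g'+3\cdot 2^{k-2}+(k-4)$ --- is asserted, not established. Two things are missing. First, ``peeling off'' a point destroys the rational equivalence: there is no reason why $z_t-\{x_i(t)\}$ should remain constant in $CH_0(A)$, and ``absorbing the discrepancy'' is the entire difficulty, not a detail one can defer to rigidity of low-genus curves in the Kummer variety. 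Second, and more fatally, the peeled family still lives on the \emph{same} abelian variety $A$ of dimension $g$; your outline contains no mechanism for passing to an abelian variety of smaller dimension $g'$, let alone a general one in its own moduli, so the inequality $g\le 2g'+\cdots$ --- the only place the constant $3\cdot 2^{k-2}+(k-4)$ enters --- has no support.

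Voisin's actual argument, as summarized in this paper's introduction, is structured quite differently. The dimension drop comes from specializing to the loci $S_\lambda$ where $\mathcal{A}_s\sim\mathcal{B}_s^{\lambda}\times\mathcal{E}_s^{\lambda}$ and projecting to $\mathcal{B}_s^{\lambda}$: she shows that the locus swept out by the single pinned orbit $|k\{0_A\}|$ is contained in a naturally defined subset $A_k$ with $\dim A_k\le k-1$, and then iterates specialization-and-projection to conclude that this locus is zero-dimensional once $\dim A\ge 2k-1$. The exponential factor $2^{k-2}$ arises from the subsequent induction passing from $|k\{0_A\}|$ to general orbits of degree $k$, i.e.\ an induction on the number of points of the cycle constrained to equal $0_A$ (the parameter $l$ in $\mathscr{G}_l(k)$, with each decrement of $l$ roughly doubling the required dimension, from the base case $\mathscr{G}_k(k)\le 2k-1$ down to $l=2$) --- not an induction on the degree $k$ with base case Pirola's theorem. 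So even as a plan your outline does not track the actual proof, and as written it leaves the crucial step unproved.
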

Voisin provides some evidence suggesting that this bound can be improved significantly. Her main conjecture from \cite{V} is the following linear bound on the gonality of very general abelian varieties:
\begin{conjecture}[Voisin, Conj. 1.2 in \cite{V}]A very general abelian variety of dimension at least $2k-1$ has gonality at least $k+1$.\label{Vweakconj}
\end{conjecture}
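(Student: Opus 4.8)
\medskip
\noindent\textbf{Proof proposal.}

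Since for abelian varieties gonality and covering gonality coincide --- via the translation action, as recalled above --- and since $2k-2\le 2k-1$, it suffices to prove that a very general abelian variety of dimension $g\ge 2k-2$ (with a fixed polarization type) has no positive-dimensional orbit of degree $k$; by the definition of $\mathscr{G}$ this gives $\mathscr{G}(k)\le 2k-2$ and hence Conjecture~\ref{Vweakconj}. Assume for contradiction that such an orbit exists. By Ro\u{\i}tman's description of orbits as countable unions of Zariski-closed sets, together with a standard countability argument over the relevant moduli space, one obtains a smooth quasi-projective base $S$, a polarized abelian scheme $\pi\colon\mathcal A\to S$ with very general generic fibre, a smooth projective family of curves $\mathcal B\to S$, and morphisms $\gamma_1,\dots,\gamma_k\colon\mathcal B\to\mathcal A$ over $S$ such that $\sum_i\{\gamma_i(b)\}$ is constant in $\CH_0(\mathcal A_s)$ for every $s$, while $\mathcal B_s\to\Sym^k\mathcal A_s$ is non-constant. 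Discard the constant $\gamma_i$, absorbing them into the constant term; let $m\le k$ be the number of surviving non-constant morphisms, and translate so that $\sum_i\gamma_i\equiv 0$ (legitimate since the Albanese class of a zero-cycle is a rational-equivalence invariant). One has $m\ge 2$, a single non-constant morphism with constant class in $\CH_0(\mathcal A_s)$ being impossible.

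The engine is a dimension estimate generalizing the rigidity theorems of Pirola and Alzati--Pirola:

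\medskip
\noindent\emph{Key Lemma.} In the situation above, with all $m$ morphisms $\gamma_i$ non-constant, one has $\dim\mathcal A_s\le 2m-2$.
\medskip

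\noindent Granting this, $g\le 2m-2\le 2k-2$, so the standing hypothesis $g\ge 2k-2$ forces $g=2k-2$ and $m=k$, i.e.\ the extremal case. I would exclude it by induction on $k\ge 3$: the base case $k=3$ is exactly the Alzati--Pirola bound $\mathscr{G}(3)\le 4$ recalled in the introduction (equivalently: no positive-dimensional degree-$3$ orbit on a very general fourfold), and Pirola's Theorem~\ref{P} disposes of $m=2$ along the way; the inductive step is a boundary refinement of the Key Lemma, described below. This yields $\mathscr{G}(k)\le 2k-2$ for $k\ge 3$ and with it the conjecture.

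To prove the Key Lemma I would adapt Voisin's degeneration-and-infinitesimal method, carried out on the $m$-fold fibre power $\mathcal A^{\times_S m}=\mathcal A\times_S\cdots\times_S\mathcal A$ --- this is the passage ``to powers of abelian varieties''. The tuple $\Gamma=(\gamma_1,\dots,\gamma_m)\colon\mathcal B\to\mathcal A^{\times_S m}$ has image a family of curves lying in the abelian subscheme $\mathcal K=\ker\big(\sigma\colon\mathcal A^{\times_S m}\to\mathcal A\big)$, and the relation $\sum_i\{\gamma_i(b)\}=\text{const}$ in $\CH_0(\mathcal A_s)$ becomes the assertion that this curve is sent to a point by $\mathcal A_s^{m}\to\CH_0(\mathcal A_s)$. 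Differentiating the relation in the $S$-directions produces a Griffiths-type infinitesimal invariant for this family of fibrewise rationally trivial zero-cycles; one then degenerates $\mathcal B_s$ --- say to a nodal or totally reducible curve --- and reads off the resulting identity inside the infinitesimal variation of Hodge structure of $\mathcal A^{\times_S m}/S$. Because the period of $A$ is as generic as possible, the only solutions should be those in which the differentials $d\gamma_i$ satisfy extra linear relations, and a rank count --- each genuinely-varying component carrying its own block, each forced relation costing two dimensions --- then yields $\dim\mathcal A_s\le 2m-2$. The boundary refinement needed for the induction comes from pushing this analysis at the extremal locus $\dim\mathcal A_s=2m-2$: there, using that the inductive hypothesis has already settled degree $m-1$, the infinitesimal equation should become overdetermined and force one $\gamma_i$ to be constant, contradicting the choice of $m$.

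The step I expect to be the main obstacle is exactly this infinitesimal count on $\mathcal A^{\times_S m}$ together with the degeneration bookkeeping: one must show that a non-degenerate solution genuinely consumes two units of dimension per genuinely-varying component --- rather than, as in the cruder argument underlying Voisin's exponential bound, letting components ``pair up'' in a way that only buys a constant factor --- and one must either rule out or route into the induction the degenerations that occur along the way (a component becoming constant, the family of curves acquiring base points, $\mathcal K$ acquiring extra abelian subvarieties). Making the count sharp and uniform, and meshing it with the boundary step, is the technical heart of the argument, and is precisely where the linear bound has to be earned.
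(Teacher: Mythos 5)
Your reduction to the statement $\mathscr{G}(k)\le 2k-2$ is the right target (it is exactly Theorem \ref{2k-2}), and your insistence on working on the fibre power of $\mathcal A$ and on normalized cycles is indeed where the paper lives. But the proposal has a genuine gap: its entire weight rests on the unproven ``Key Lemma'' ($\dim\mathcal A_s\le 2m-2$ when all $m$ components $\gamma_i$ are non-constant) together with an equally unproven ``boundary refinement,'' and the mechanism you sketch for proving it --- a Griffiths-type infinitesimal invariant, degeneration of the curve $\mathcal B_s$ to a nodal curve, and a rank count in the infinitesimal variation of Hodge structure showing each varying component ``costs two dimensions'' --- is the Pirola/Alzati--Pirola line of attack, which is precisely what does not scale beyond small $k$ (the paper points to Lemmas 6.8--6.10 of \cite{AP} as the sticking point, namely controlling generic finiteness of the relevant projections on subvarieties of $A^k$). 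You correctly flag that making this count ``sharp and uniform'' is where the linear bound has to be earned, but that is the content of the theorem; as written the argument is a plan, not a proof, and there is no evidence the infinitesimal count closes.

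The paper's actual mechanism is different and worth contrasting. Rather than differentiating the rational-equivalence relation and degenerating the curve $\mathcal B_s$, it degenerates the abelian variety: one specializes to isogeny loci $S_\lambda$ where $\mathcal A_s\sim\mathcal B^\lambda_s\times\mathcal E^\lambda_s$ and shows (Proposition \ref{prop1}) that for a suitable $\lambda$ the projected suborbit $p_\lambda(\mathcal Z_s)\subset B^k$ varies with $s\in S_\lambda(B)$; sweeping over $s$ raises the dimension of the locus foliated by suborbits by one while dropping $\dim A$ by one, and iterating down to abelian surfaces produces a $(2k-3)$-dimensional subvariety of $B^{k,\,0}$ foliated by suborbits of dimension at least $2$, contradicting the Mumford--Ro\u{\i}tman-type bound of Corollary \ref{absurfbound}. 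The induction is on the dimension of the abelian variety, not on $k$ or on the number of non-constant components, and there is no extremal case $g=2k-2$, $m=k$ to be excluded separately. The hard technical point --- keeping the projections generically finite on $\mathcal Z$ at each step, which is the true analogue of your worry about components ``pairing up'' --- is resolved by the double-specialization trick of Section \ref{salvaging} (specializing to $D\times F\times E_s$ and comparing the two projections), not by an infinitesimal rank count. None of these ingredients appear in your proposal, so the gap is not cosmetic.
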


The main result of this paper is the proof of this conjecture. It is obtained by generalizing Voisin's method to powers of abelian varieties. This allows us to rule out the existence of positive dimensional orbits of degree $k$ for very general abelian varieties of sufficiently large dimension.
\begin{thmn}[\ref{2k-2}]
For $k\geq 3$, a very general abelian variety of dimension at least $2k-2$ has no positive dimensional orbits of degree $k$, i.e.
$\mathscr{G}(k)\leq 2k-2$.
\end{thmn}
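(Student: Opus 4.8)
The plan is to adapt Voisin's degeneration-of-varying-cycles argument from a single abelian variety to powers $A^m$, and to extract from the resulting obstruction a much tighter numerical bound. Suppose for contradiction that the very general abelian variety $A$ of dimension $g\geq 2k-2$ carries a positive-dimensional orbit $|\{a_1\}+\ldots+\{a_k\}|$; after translating we may assume $0_A$ appears among the $a_i$. As in \cite{P} and \cite{V}, such an orbit produces a nonconstant map $f\colon C\to A$ from a curve $C$ (say the normalization of a component of the orbit, or of a $k$-gonal curve through a general point), together with a second curve or family realizing the rational equivalence $\sum f(c_t^{(i)})\sim_{\mathrm{rat}}\sum a_i$ as $t$ varies. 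The first step is to encode the rational equivalence by a surface (or family of curves) $S\to B$ mapping to $A$, whose fibers are the cycles $Z_b=\sum_i \{x_i(b)\}$, and to observe that the holomorphic $p$-forms on $A$ pull back to zero on $S$ because the $Z_b$ are rationally equivalent — this is the Mumford–Roïtman principle that drives everything.

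The key new idea — the "powers of abelian varieties" mechanism promised in the introduction — is to consider, for suitable $m$, the correspondence in $A^m$ obtained by taking $m$ "independent copies" of the moving cycle, i.e.\ replacing $A$ by $A^m$ and the cycle $Z_b$ by a product-type cycle on $A^m$, so that the obstruction coming from $H^0(A^m,\Omega^{p})=\bigoplus H^0(A,\Omega^{a_1})\otimes\cdots\otimes H^0(A,\Omega^{a_m})$ becomes multiplicatively stronger. Concretely, I would set up a family of zero-cycles of degree $k$ on $A^m$ supported on the diagonal-translated copies of $C$, differentiate the rational equivalence to get a vector of "infinitesimal" conditions valued in $H^0(C,\omega_C)$-type spaces twisted by the pullback of $\Omega_A$, and count dimensions. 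Since $A$ is very general, the only abelian subvarieties are $0$ and $A$, and the image of $C$ in $A$ generates $A$; this rigidity (together with a Hodge-theoretic / Lefschetz-type input bounding the rank of the differential of the Abel–Jacobi-type map on $A^m$) forces a linear-algebra inequality relating $k$, $g$, and $m$. Optimizing over $m$ should collapse Voisin's $2^{k-2}(2k-1)+\cdots$ to the linear bound $g\leq 2k-3$, contradicting $g\geq 2k-2$.

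In more detail, the steps in order are: (1) reduce to a normalized curve $f\colon C\to A$ with $f_*[C]$ generating $A$ and a nonconstant deformation of the cycle $\sum_i\{x_i\}$ inside its rational-equivalence class; (2) for each $m\geq 1$ build the auxiliary family on $A^m$ and write down the infinitesimal rational-equivalence (= "pullback of forms vanishes") constraint as the vanishing of a pairing between the tangent space to the family and $H^0(A^m,\Omega^{\bullet})$; (3) use the very-general hypothesis (Néron–Severi rank $1$, no nontrivial abelian subvarieties, the period map being as nondegenerate as possible) to show the relevant pairing is nondegenerate on a large subspace, yielding an inequality of the shape $\dim(\text{moduli of }A)\leq (\text{something growing like }mk - \binom{m}{2}\text{-type correction})$; (4) choose $m$ optimally (I expect $m\approx k$) and simplify to get $g\leq 2k-3$; (5) conclude $\mathscr{G}(k)\leq 2k-2$ and note $k\geq 3$ is where the counting becomes favorable, recovering and extending \cite{AP} ($k=3$) and matching Conjecture \ref{Vweakconj} at the endpoint. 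The main obstacle — and where the real work lies — is step (3): controlling the rank of the differential of the cycle map on $A^m$ uniformly, i.e.\ proving that the infinitesimal Mumford obstruction does not degenerate for the very general $A$. This requires a careful monodromy/specialization argument (specialize to $A$ isogenous to a product, or to a Jacobian, compute the obstruction there, then spread out), very much in the spirit of Voisin's original use of the Gauss map and the second fundamental form, but bookkeeping the combinatorics of $\bigoplus_{a_1+\cdots+a_m=p} H^0(\Omega^{a_1})\otimes\cdots$ is what makes the bound linear rather than exponential.
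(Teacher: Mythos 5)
Your sketch does not contain the mechanism that actually produces the linear bound, and the part you identify as the ``key new idea'' misreads what working with powers of abelian varieties means here. An orbit of degree $k$ is by definition a fiber of $A^k\to CH_0(A)$, so the relevant subvariety already lives in $A^k$; there is no auxiliary integer $m$ of ``independent copies'' to optimize over, and the Mumford--Ro\u{\i}tman vanishing on $H^0(A^m,\Omega^p)$ does not become ``multiplicatively stronger'' in any way that converts Voisin's exponential bound into a linear one. The actual engine is an induction by specialization and projection: one restricts the locally complete family to loci $S_\lambda$ where $\mathcal{A}_s\sim \mathcal{B}^\lambda_s\times\mathcal{E}^\lambda_s$ with $\mathcal{E}^\lambda_s$ an elliptic curve, projects the one-dimensional family of normalized suborbits to $(\mathcal{B}^\lambda_s)^k$, and proves (Proposition \ref{prop1}) that the projected variety varies with the elliptic factor, so that its union over $S_\lambda(B)$ has dimension one higher. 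Each such step trades one dimension of the abelian variety for one extra dimension of a variety foliated by positive-dimensional suborbits; after $2k-4$ steps one lands on an abelian surface $B$ with a $(2k-3)$-dimensional subvariety of $B^{k,\,0}$ foliated by suborbits of dimension at least $2$, contradicting the bound $\dim Z\leq 2(k-1)-d$ of Corollary \ref{absurfbound}. Nothing in your steps (2)--(4) produces this trade-off or this terminal inequality, and ``optimizing over $m$'' has no counterpart in a correct argument.

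Moreover, the genuinely hard point --- which your step (3) does not address --- is that Proposition \ref{prop1} requires the projection $p_\lambda$ restricted to the suborbit family to be generically finite on its image (conditions $(*)$, $(**)$, $(***)$ of Section \ref{specialization}), and this condition becomes harder to guarantee as the dimension of the family grows along the induction. The paper devotes Section \ref{salvaging} to recovering it, by specializing simultaneously to products $D\times F\times E_s$ and comparing the projections to $D^k$ and to $(D\times F)^k$: equality of dimensions of the two images forces generic finiteness. Without an argument for this at every inductive step, the induction --- and hence the bound $2k-2$ --- does not go through. I would also flag that your reduction in step (1) to a $k$-gonal curve is not available: positive-dimensional orbits of degree $k$ are strictly more general than those arising from $k$-gonal curves, and the theorem is a statement about the former.
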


This gives the following lower bound on the gonality of very general abelian varieties:
\begin{coroln}[\ref{gonalitybound}]
For $k\geq 3$, a very general abelian variety of dimension at least $2k-2$ has gonality at least $k+1$. In particular, Conjecture \href{Vweakconj}{\ref{Vweakconj}} holds.
\end{coroln}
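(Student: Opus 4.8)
The plan is to deduce this corollary from Theorem \ref{2k-2} by the elementary passage from curves of small gonality to positive dimensional orbits recalled in the introduction; the whole of the content sits in Theorem \ref{2k-2}, and what is left is essentially formal. Fix $k\geq 3$ and let $A$ be a very general abelian variety of dimension at least $2k-2$, and suppose for contradiction that some irreducible curve $C\subset A$ has gonality $\ell\leq k$. Let $\nu\colon \tilde C\to A$ be the composite of the normalization $\tilde C\to C$ with the inclusion $C\hookrightarrow A$, a finite morphism. A degree-$\ell$ morphism $\tilde C\to \proj^1$ exhibits its fibers as a $\proj^1$-family of effective divisors $D_t$ of degree $\ell$ on $\tilde C$, all rationally equivalent to one another; hence the zero-cycles $\nu_* D_t$ on $A$ are pairwise rationally equivalent of degree $\ell$. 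A routine check (injectivity of $t\mapsto D_t$ into $\Sym^\ell \tilde C$, finiteness of the map $\Sym^\ell\tilde C\to \Sym^\ell A$ induced by $\nu$, and properness of $\proj^1$) shows that $t\mapsto \nu_* D_t$ is a finite morphism $\proj^1\to \Sym^\ell A$, so its image is a positive dimensional subvariety lying in a single orbit of degree $\ell$ on $A$.

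Next I would promote this to a positive dimensional orbit of degree $k$: exactly as observed in the introduction, if $|\{a_1\}+\ldots+\{a_\ell\}|$ is positive dimensional then so is $|\{a_1\}+\ldots+\{a_\ell\}+(k-\ell)\{0_A\}|$, since adding the fixed cycle $(k-\ell)\{0_A\}$ to a family alters neither its dimension nor the fact that it lies in one rational equivalence class. This contradicts Theorem \ref{2k-2}, so $A$ carries no curve of gonality $\leq k$, i.e.\ its gonality is at least $k+1$. For the last assertion of the corollary, the case $k\geq 3$ of Conjecture \ref{Vweakconj} is now immediate because $2k-1\geq 2k-2$; the case $k=2$ follows by the same curve-to-orbit argument with Pirola's Theorem \ref{P} in place of Theorem \ref{2k-2}; and the case $k=1$ holds because an abelian variety admits no nonconstant morphism from $\proj^1$, so every curve in it has gonality at least $2$.

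The only point in this deduction that requires any attention — and the closest thing to an obstacle — is verifying that the $\proj^1$-family of gonality-pencil fibers does not collapse to a point after pushforward to $\Sym^\ell A$, which is what makes the resulting orbit genuinely positive dimensional; this is immediate from finiteness of $\nu$ together with properness of $\proj^1$, and so the argument is entirely formal once Theorem \ref{2k-2} is in hand.
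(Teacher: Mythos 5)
Your proposal is correct and follows exactly the route the paper intends: the corollary is stated without a separate proof precisely because it is the curve-to-orbit reduction from the introduction (a $k$-gonal curve yields a positive dimensional orbit of degree $k$, and $\mathscr{G}$ is non-decreasing via adding copies of $\{0_A\}$) combined with Theorem \ref{2k-2}. Your extra care about non-collapse of the $\mathbb{P}^1$-family under pushforward, and your handling of the $k=1,2$ cases of Conjecture \ref{Vweakconj} via Pirola's theorem and the absence of rational curves, are both sound and consistent with the paper.
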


We can also generalize Theorem \href{P}{\ref{P}}\; in a different direction. Observe that a nonconstant morphism from a hyperelliptic curve $C$ to an abelian surface $A$ gives rise to a positive dimensional orbit of the form $|2\{0_A\}|$. Indeed, translating $C$ if needed, we can assume that a Weierstrass point of $C$ maps to $0_A$. This suggests to consider the maximal $g$ for which a very general abelian variety $A$ of dimension $g$ contains an irreducible curve $C\subset A$ whose normalization $\pi: \widetilde{C}\to C$ admits a degree $k$ morphism to $\mathbb{P}^1$ with a point of ramification index at least $l$. We say that $p\in \widetilde{C}$ has ramification index at least $l$ if the sum of the ramification indices of $\widetilde{C}\to \mathbb{P}^1$ at all points in $\pi^{-1}(\pi(p))$ is at least $l$. This maximal $g$ is less than
$$\mathscr{G}_l(k):=\text{inf}\begin{cases}g\in \mathbb{Z}_{>0}\bigg| \begin{rcases}\text{a very general abelian variety of dimension } g \text{ does not admit }\\\text{ a positive dimensional orbit of the form } |\sum_{i=1}^{k-l} \{a_i\}+l\{a_0\}|\end{rcases}.\end{cases}$$
Clearly, we have
$$\mathscr{G}_1(k)=\mathscr{G}_0(k)=\mathscr{G}(k),$$
and
$$\mathscr{G}_l(k)\leq \mathscr{G}_{l-1}(k),\qquad\forall l\in \{1,\ldots, k\}.$$
In this direction, Voisin shows:
\begin{theorem}[Voisin, Thm. 1.4 (iii) \& (iv) in \cite{V}] The following inequality holds:
$$\mathscr{G}_l(k)\leq 2^{k-l}(2k-1)+(2^{k-l}-1)(k-2).$$
In particular,
$$\mathscr{G}_k(k)\leq 2k-1.$$
\end{theorem}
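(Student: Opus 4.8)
The plan is to prove both assertions by a single induction that bottoms out at Pirola's Theorem~\ref{P}, organised around the elementary identity $f(k,l)=2f(k,l+1)+(k-2)$, where $f(k,l):=2^{k-l}(2k-1)+(2^{k-l}-1)(k-2)$ is the claimed bound. First I would pin down the object of study: assuming a very general abelian variety $A$ of dimension $g$ has a positive dimensional orbit $|\sum_{i=1}^{k-l}\{a_i\}+l\{a_0\}|$, Ro\u{\i}tman's description \cite{R1} of such an orbit as a countable union of closed subvarieties provides a positive dimensional component, and after shrinking it, normalising, and a base change I obtain a smooth projective curve $C$ with morphisms $f_0,f_1,\dots,f_{k-l}\colon C\to A$ such that the degree $k$ cycle $l\,f_0(t)+\sum_{i\ge1}f_i(t)$ represents a fixed class in $\CH_0(A)$ for every $t\in C$, the underlying $0$-cycle acquiring a point of multiplicity $l$ over one point $t_0$.

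The analytic input is the constant cycle condition. Holomorphic $1$-forms on $A$ being translation invariant, $l\,f_0^*\omega+\sum_{i\ge1}f_i^*\omega=\bigl(l\,f_0+\sum_{i\ge1}f_i\bigr)^*\omega$ for every $\omega\in H^0(A,\Omega^1_A)$, and constancy of the class — even of its Albanese sum — forces this form to vanish, so $l\,f_0+\sum_{i\ge1}f_i\colon C\to A$ is constant. On its own this is far too weak, so, following Voisin's strategy in \cite{V}, I would use that the orbit exists for the \emph{very general} $A$: a countability argument and the spreading out of rational equivalence produce, over an analytic neighbourhood of a general point of the moduli of polarised abelian varieties, a family of curves realising the above data fibrewise. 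Differentiating ``$l\,f_0+\sum f_i\equiv\text{const}$'' in the moduli directions, and using that for very general $A$ the Kodaira--Spencer classes exhaust $\Sym^2 H^0(A,\Omega^1_A)^{\vee}$, one lands in the setting of the Pirola and Alzati--Pirola infinitesimal rigidity results \cite{P,AP}: the resulting quadratic compatibility conditions on the forms $f_i^*\omega$ cannot all hold unless $C$ with its maps is rigid, contradicting that it must deform with $A$; tracking the numerology of this obstruction yields the quantitative bound.

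For the induction, the base case $l=k$ concerns the class $|k\{a_0\}|$, where the configuration is a totally ramified fibre of a $g^1_k$ through a point of $A$; a multiplicity-refined version of the rigidity argument above — which I expect to cascade down one unit of degree at a time to the hyperelliptic case of Theorem~\ref{P} — then gives $\mathscr{G}_k(k)\le 2k-1=f(k,k)$. For the inductive step, from a positive dimensional orbit of type $(k,l)$ on a very general $A$ of dimension $g$ I would manufacture one of type $(k,l+1)$ on a very general abelian variety of dimension at least roughly $\tfrac12\bigl(g-(k-2)\bigr)$: the extra unit of multiplicity is forced by a difference/self-product construction — e.g.\ passing to the fibre product of $C$ with itself over its $g^1_k$, or to a summation correspondence on $A\times A$ — which doubles the relevant Hodge-theoretic data and so halves the dimension one controls, with $(k-2)$ absorbed by the ramification bookkeeping of the $g^1_k$. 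Combining this with $\mathscr{G}_{l+1}(k)\le f(k,l+1)$ and $f(k,l)=2f(k,l+1)+(k-2)$ gives $\mathscr{G}_l(k)\le f(k,l)$; and since a $g^1_k$ always has a simple ramification point, an arbitrary positive dimensional degree $k$ orbit already produces one of type $(k,2)$ with no loss of dimension, so the same estimates recover $\mathscr{G}(k)=\mathscr{G}_1(k)\le f(k,2)$.

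The step I expect to be the main obstacle is precisely this quantitative one: arranging the self-product construction so that the new abelian variety is again \emph{very general} and of the claimed dimension, and so that it retains enough Kodaira--Spencer directions for the rigidity argument to be rerun on it. This is where the exponential factor $2^{k-l}$ is generated and where the bulk of the technical work lies; the analytic core (translation invariance of $1$-forms together with infinitesimal rigidity) and the reduction to curves are comparatively formal.
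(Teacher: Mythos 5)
First, a point of reference: the paper does not prove this statement at all --- it is quoted from Voisin's paper \cite{V} (her Theorem 1.4 (iii) and (iv)) and used as a black box. So the only thing to measure your proposal against is the sketch of Voisin's method given in the introduction of the present paper. Your combinatorial skeleton is right: the bound $f(k,l)=2^{k-l}(2k-1)+(2^{k-l}-1)(k-2)$ does satisfy $f(k,l)=2f(k,l+1)+(k-2)$, the proof is an induction on $l$ descending from the base case $\mathscr{G}_k(k)\le 2k-1$, and you correctly noticed that the published bound for $\mathscr{G}_1(k)=\mathscr{G}(k)$ is $f(k,2)$ rather than $f(k,1)$ because one can enter the induction at $l=2$.

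There are, however, two genuine gaps. The first is that your inductive step is a placeholder: the assertion that a positive-dimensional orbit with an $l$-fold point on a very general abelian variety of dimension $2g+k-2$ produces one with an $(l+1)$-fold point on a very general abelian variety of dimension $g$ is the entire content of Voisin's Theorem 1.4(iii), and you offer only ``a difference/self-product construction \dots which doubles the relevant Hodge-theoretic data and so halves the dimension one controls,'' while yourself flagging this as the main obstacle. Without that construction nothing is proved. The second gap is that the engine you invoke is the wrong one: you attribute the dimension control to Kodaira--Spencer classes exhausting $\Sym^2H^0(A,\Omega^1_A)^{\vee}$ and to the infinitesimal rigidity computations of \cite{P} and \cite{AP}; that method yields the cases $k=2,3$ and does not scale to give the exponential bound. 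Voisin's actual mechanism --- recalled in the introduction here and generalized in Sections \ref{specialization}--\ref{salvaging} --- is to show that the relevant locus (e.g.\ $\bigcup_i\mathrm{pr}_i(|k\{0_A\}|)$) lies in a \emph{naturally defined subset} of $A$ of dimension at most $k-1$, and then to specialize repeatedly to loci where $A\sim B\times E$, using that the image of that subset under the projection to $B$ varies with the elliptic factor, so that its dimension drops by one each time $\dim A$ drops by one. That is how the base case $\mathscr{G}_k(k)\le 2k-1$ is really obtained (start at $\dim A=k$, where the naturally defined subset is proper, and apply the projection step $k-1$ times), not by a ``cascade down to the hyperelliptic case.'' A further minor issue: an arbitrary positive-dimensional degree-$k$ orbit is not a priori the fibre of a $g^1_k$ on a curve, so your claim that one passes to type $(k,2)$ ``with no loss of dimension'' because a $g^1_k$ has a simple ramification point needs justification; even in the gonality application one must use the translation trick (Remark \ref{trick}) to place the ramification point correctly.
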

We can improve on Voisin's result to show the following:

\begin{thmn}[\ref{k+1}]
A very general abelian variety $A$ of dimension at least $2k+2-l$ does not have a positive dimensional orbit of the form $|\sum_{i=1}^{k-l}\{a_i\}+l\{0_A\}|$, i.e. $\mathscr{G}_{l}(k)\leq 2k+2-l.$ \\
Moreover, if $A$ is a very general abelian variety of dimension at least $k+1$, the orbit $|k\{0_A\}|$ is countable, i.e. $\mathscr{G}_{k}(k)\leq k+1.$
\end{thmn}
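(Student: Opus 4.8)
The plan is to argue by contradiction, in the spirit of Pirola, Alzati--Pirola and Voisin, but running the argument on powers of $A$ rather than on $A$ itself. Suppose a very general abelian variety $A$ of dimension $g$ admits a positive-dimensional orbit of the stated shape, with $g\geq 2k+2-l$ (resp.\ $g\geq k+1$ for the second assertion). By Ro\u{\i}tman's theorem orbits are countable unions of Zariski closed subsets, so a Baire-category argument spreads this datum out: after passing to a generically finite cover $U$ of a component of the relevant moduli space of polarized abelian varieties of dimension $g$, one obtains a family $\mathcal C\to U$ of irreducible curves $\mathcal C_s\subset A_s$ equipped with degree $k$ pencils $f_s\colon\widetilde{\mathcal C}_s\to\mathbb P^1$ whose fibers all push forward to mutually rationally equivalent zero-cycles on $A_s$, and such that one distinguished fiber $f_s^{-1}(0)$ pushes forward to $l\{0_{A_s}\}+\sum_{i=1}^{k-l}\{c_i(s)\}$; using translations on $A_s$ we have normalized the multiplicity-$l$ point to be the origin, and it is the only point of this distinguished cycle that does not move with $s$. (When $l\leq 4$ the bound $\mathscr G_l(k)\leq 2k+2-l$ already follows from the inequality $\mathscr G_l(k)\leq\mathscr G(k)$ and Theorem~\ref{2k-2}, so the genuinely new content is the range $l\geq 5$ and the refined estimate $\mathscr G_k(k)\leq k+1$.)

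For the core I would adapt Voisin's infinitesimal method. For $s\in U$ and $t\in\mathbb P^1$ the zero-cycle $\zeta_s(t):=f_s^{-1}(t)-f_s^{-1}(0)$ is homologically trivial and rationally equivalent to zero on $A_s$. Since $A$ is very general the variation of Hodge structure of $\mathcal A\to U$ is as large as possible, so the vanishing of the infinitesimal invariant attached to the family $(\zeta_s(t))_{s,t}$ --- expressed, following Green and Voisin, as a cup product against the Gauss--Manin connection of $\mathcal A\to U$ --- is a severe constraint. Passing to powers is what makes it quantitatively effective: a positive-dimensional orbit on $A$ produces, by applying products of translations to the pencil, a curve in a suitable power $A^{n}$ together with a family of rationally trivial zero-cycles on $A^{n}$ whose infinitesimal invariant can be analyzed coordinate by coordinate, and in this analysis each of the $l$ copies of the fixed origin contributes nothing to the variation of Hodge structure and so improves the estimate by one unit. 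The outcome I would expect is the inequality $g\leq 2k+1-l$, contradicting $g\geq 2k+2-l$ and hence proving $\mathscr G_l(k)\leq 2k+2-l$.

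For the second statement one exploits the extreme case $l=k$. The orbit $|k\{0_A\}|$ corresponds to a pencil $f$ one --- hence every --- of whose fibers pushes forward to $k\{0_A\}$ on $A$, so applying the Albanese map already forces $\sum_j x_j(t)=0_A$ for all $t$; consequently the associated curve lies in the abelian subvariety $\ker(A^k\xrightarrow{+}A)\cong A^{k-1}$, and the distinguished cycle has no moving points whatsoever. A sharper form of the infinitesimal analysis using this extra structure --- a curve in $A^{k-1}$ passing through the origin and carrying a pencil that is totally concentrated over $0_A$ --- should force the family $\mathcal C\to U$ to be isotrivial as soon as $g\geq k+1$. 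But an isotrivial such family would exhibit every $A_s$ as a quotient of a single fixed abelian variety, which is impossible: a fixed abelian variety has only countably many quotient abelian varieties, whereas $\{A_s\}_{s\in U}$ contains uncountably many isomorphism classes. Hence $|k\{0_A\}|$ is zero-dimensional, i.e.\ countable, and $\mathscr G_k(k)\leq k+1$.

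As in Voisin's original argument, the main obstacle is the Hodge-theoretic heart of the middle step: one must choose the auxiliary curves in the powers $A^n$ so that a positive-dimensional orbit genuinely yields a nontrivial family of rationally trivial cycles there, must compute --- or at least sharply bound --- the associated cup-product/infinitesimal invariant, and must verify that for very general $A$ the relevant linear map on cohomology is injective away from the contribution of the fixed origin. This last point is where $\End(A)=\mathbb Z$ and the maximality of the variation of Hodge structure enter, and it is also where the sharp constants $2k+2-l$ and $k+1$, rather than weaker bounds, are won or lost; a secondary difficulty is the numerical bookkeeping that keeps track of exactly which marked points vary in moduli.
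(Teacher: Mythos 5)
Your set-up (contradiction, Ro\u{\i}tman spreading-out over a generically finite cover of moduli, normalizing the multiplicity-$l$ point to $0_{A_s}$ by translation) matches the paper's, and your observation that the $l$ frozen copies of the origin are what should buy the improvement over $\mathscr{G}(k)\leq 2k-2$ is exactly the right intuition. But the core of your argument is missing. You propose to run a Green--Voisin infinitesimal-invariant/Gauss--Manin cup-product computation on a curve in a power of $A$ and state that ``the outcome I would expect is the inequality $g\leq 2k+1-l$''; you then acknowledge yourself that the choice of auxiliary curves, the computation of the invariant, and the injectivity of the relevant map on cohomology are all unverified. That is not a proof of the bound, and it is not the mechanism the paper uses anywhere: the paper never invokes infinitesimal invariants or the Gauss--Manin connection. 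The same applies to the second assertion, where ``a sharper form of the infinitesimal analysis should force isotriviality'' is again an expectation rather than an argument.

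What the paper actually does is purely the specialization-and-projection machinery of Sections 3--4. Starting from the one-dimensional suborbit $\mathcal{Z}\subset\mathcal{A}^k$ with $\mathcal{A}_s^{k-l}\times\{0_{\mathcal{A}_s}\}^l\cap\mathcal{Z}_s\neq\emptyset$, one applies Proposition \ref{mainprop} iteratively, degenerating to abelian varieties isogenous to $B\times E_s$ and projecting; each step raises the fiber dimension by one, so after descending from dimension $2k+2-l$ to an abelian surface $B$ one obtains a $(2k+1-l)$-dimensional subvariety of $B^{k,\,0}$. Partitioning the parameter locus $S_\lambda(B)$ according to the value of the distinguished point $\underline{b}=(b_1,\ldots,b_{k-l},0_B,\ldots,0_B)$, which ranges over a $2(k-l)$-dimensional family, shows this subvariety is foliated by suborbits of dimension at least $l+1$, contradicting the form-vanishing bound of Corollary \ref{absurfbound} ($\dim Z\leq 2(k-1)-d$). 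For $l=k$ the distinguished point is entirely frozen, the whole $k$-dimensional projected image is a single suborbit, and the same corollary gives $k\leq k-2$, whence $\mathscr{G}_k(k)\leq k+1$. If you want to complete your write-up along the paper's lines, the ingredients you need to import are Proposition \ref{mainprop} (whose role is precisely to guarantee the generic finiteness you would otherwise have to check at each inductive step) and Corollary \ref{absurfbound}; if you want to pursue the infinitesimal-invariant route instead, you must actually produce the computation you defer, and as it stands that step is a genuine gap.
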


Taking a slightly different perpective, one can consider the maximal dimension of orbits of degree $k$ for abelian varieties. One of the main contributions of \cite{V} is the following extension of results of Alzati-Pirola (the cases $k=2,3$, see \cite{AP}):
\begin{theorem}[Voisin, Thm. 1.4 (i) of \cite{V}]\label{k-1}
Orbits of degree $k$ on an abelian variety have dimension at most $k-1$.
\end{theorem}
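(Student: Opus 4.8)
The plan is to bound the dimension of each irreducible component of an orbit separately and to induct on $k$, the two ingredients being the Mumford--Ro\u{\i}tman principle on holomorphic forms along families of rationally equivalent zero-cycles and dimension estimates extracted from the inductive hypothesis.

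First I would pass to an infinitesimal statement. Let $Z$ be an irreducible component of an orbit $\bigl|\sum_{i=1}^{k}\{a_i\}\bigr|$ on $A$, of dimension $d:=\dim Z$; it is Zariski closed by Ro\u{\i}tman, and we may assume $d\geq 1$. Pick an irreducible component $\widetilde Z$ of the preimage of $Z$ in $A^{k}$ that dominates $Z$, and a resolution $B\to\widetilde Z$; composing with the projections yields morphisms $a_i\colon B\to A$ with $\sum_i\{a_i(t)\}$ independent of $t\in B$ in $\CH_0(A)$, and $\dim B=d$. The summation morphism $\sum_i a_i\colon B\to A$ is then constant, rationally equivalent zero-cycles having equal Albanese image and $\mathrm{Alb}(A)=A$. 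More to the point, the Mumford--Ro\u{\i}tman argument gives $\sum_{i=1}^{k}a_i^{*}\omega=0$ in $H^{0}(B,\Omega^{j}_{B})$ for all $\omega\in H^{0}(A,\Omega^{j}_{A})$ and all $j\geq 1$; since $H^{0}(A,\Omega^{j}_{A})=\bigwedge^{j}H^{0}(A,\Omega^{1}_{A})$, evaluating at a general $b\in B$ and setting $V:=T_{0}A$ and $\phi_i:=da_i|_{b}\colon T_{b}B\to V$, this is equivalent to
\[
\sum_{i=1}^{k}\bigwedge\nolimits^{j}\phi_i=0\colon\ \bigwedge\nolimits^{j}T_{b}B\longrightarrow\bigwedge\nolimits^{j}V,\qquad j\geq 1,
\]
while $(\phi_i)_{i}\colon T_{b}B\hookrightarrow V^{k}$ is injective because $\widetilde Z\subseteq A^{k}$; note $\dim T_{b}B=d$. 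For $j=1$ this merely reproves that $\sum_i a_i$ is constant.

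Next comes the induction. For $k=1$ there is nothing to prove, since $A$ contains no rational curve and so $\{a\}\sim\{a'\}$ forces $a=a'$. Assuming the bound in all degrees below $k$, fix $1\leq m\leq k-1$ of the maps, say $a_{i_{1}},\dots,a_{i_{m}}$, to general constant values: the corresponding fibre of $B$ maps injectively into $A^{k-m}$ (the fixed coordinates being constant along it) and lands in a single component of an orbit of degree $k-m$, which by the inductive hypothesis---hence also the fibre---has dimension at most $k-m-1$. Comparing with the fibre dimension of $(a_{i_{1}},\dots,a_{i_{m}})$ at the general point $b$, this yields the rank bounds
\[
\operatorname{rank}\bigl(\phi_{i_{1}},\dots,\phi_{i_{m}}\colon T_{b}B\to V^{m}\bigr)\ \geq\ d-(k-m-1),\qquad 1\leq m\leq k-1.
\]
In particular, if some $a_i$ is constant or has one-dimensional image, the case $m=1$ already gives $d\leq k-1$, so one may assume every $a_i(\widetilde Z)$ has dimension at least two.

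It remains to deduce $d\leq k-1$ from the multilinear identities above together with these rank bounds, and I expect this multilinear-algebra step to be the main obstacle. The point is that the identities $\sum_i\bigwedge^{j}\phi_i=0$ and the injectivity of $(\phi_i)_{i}$ do not by themselves force $d\leq k-1$---there are spurious first-order solutions---so the rank bounds, which carry genuinely integrated information coming from the induction, must enter essentially; and even with them one must use the identities for all $j$ up to $k$, so that the target $\bigwedge^{j}V$ is large when $\dim A$ is large, while the small $\dim A$ cases (where the high-degree identities are vacuous but $V$ itself is small) are treated separately. Granting this, summing over the finitely many components of the orbit finishes the proof.
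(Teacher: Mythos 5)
Your setup is the right one, and it matches how the result is actually established: by the Mumford--Ro\u{\i}tman argument (Proposition \ref{vanish} applied with $r=0$), every form $\omega_k=\sum_i\mathrm{pr}_i^*\omega$ with $\omega\in H^0(A,\Omega^q)$, $q\geq 1$, restricts to zero on an orbit $Z\subset A^k$. But the theorem is then \emph{precisely} the assertion that this vanishing forces $\dim Z\leq k-1$. That assertion is Proposition 4.2 of \cite{V}; the present paper does not reprove it but quotes it (see the paragraph preceding Corollary \ref{1dimfam}) and combines it with Proposition \ref{vanish} exactly as you do in your first two paragraphs. Your proposal stops at the point where the real work begins: you record the identities $\sum_{i}\bigwedge^{j}\phi_i=0$ together with some rank bounds coming from the induction, declare the deduction of $d\leq k-1$ from them to be ``the main obstacle,'' and finish by ``granting this.'' That step is not a technical afterthought --- it is the entire content of the theorem, and it is genuinely delicate (Voisin's proof of her Proposition 4.2 is itself a nontrivial induction analyzing the tangent space $T_bB\subset V^k$ and its projections). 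As written, the proposal therefore has a genuine gap.

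Two further cautions about the route you sketch. The rank bounds obtained by freezing $m$ of the coordinates are legitimate (the fibre does inject into an orbit of degree $k-m$), but even the strongest of them ($m=k-1$, i.e.\ any $k-1$ of the $\phi_i$ are jointly injective on $T_bB$) is a priori compatible with $d$ being large, and you give no indication of how these bounds interact with the wedge identities to close the argument; in particular the case $\dim A$ small, where the identities for large $j$ are vacuous, is flagged but not handled. If the goal is to reprove the theorem rather than cite it, the missing ingredient is a proof of the linear-algebra statement: if $T\subset V^k$ is a $d$-dimensional subspace on which $\sum_i\bigwedge^j\mathrm{pr}_i$ vanishes for all $j\geq 1$, and $T$ arises as the tangent space to an orbit as above, then $d\leq k-1$. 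Until that is supplied, the argument does not establish the bound.
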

As observed by Voisin, by considering abelian varieties containing an elliptic curve, one sees that this bound cannot be improved for all abelian varieties. Moreover, it cannot be improved for very general abelian surfaces as shown in \cite{HSL} (see Example \hyperref[HSL]{\ref{HSL}} below). Nevertheless, Theorem \href{Vmainthm}{\ref{Vmainthm}} shows that it can be improved for very general abelian varieties of large dimension. In this direction, we have:

\begin{thmn}[\ref{no(k-1)}]
Orbits of degree $k$ on a very general abelian variety of dimension at least $k-1$ have dimension at most $k-2$.
\end{thmn}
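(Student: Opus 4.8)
The strategy is to combine Theorem \ref{2k-2} (that $\mathscr{G}(k) \le 2k-2$) with a dimension-counting / degeneration argument that trades a small amount of ambient dimension for a reduction in orbit dimension by one. Suppose $A$ is a very general abelian variety of dimension $g \ge k-1$ and that it carries an orbit $|\{a_1\}+\cdots+\{a_k\}|$ of dimension exactly $k-1$; we want a contradiction. The key observation is that a positive-dimensional orbit $Z \subset \operatorname{Sym}^k A$ of dimension $d$ sweeps out, by fixing all but one of the $k$ points, a large supply of orbits of smaller degree: more precisely, one should be able to show that through a general point of the support of $Z$ there is a positive-dimensional orbit of degree $k-1$ — or, dually, that if every orbit of degree $k$ has dimension $\le d$, then every orbit of degree $k-1$ has dimension $\le d-1$. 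This "peeling off a point" inequality, $\dim(\text{degree-}k\text{ orbits}) \ge \dim(\text{degree-}(k-1)\text{ orbits}) + 1$ whenever the latter is positive, together with the statement $\mathscr{G}(k) \le 2k-2$ bootstrapped downward, should force the bound $k-2$ once $g \ge k-1$.

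First I would make precise the relation between orbits in different degrees. Given a degree-$k$ orbit $|\zeta|$ with $\zeta = \{a_1\}+\cdots+\{a_k\}$, consider the incidence variety $\{(b_1,\ldots,b_{k-1}, a) : \{b_1\}+\cdots+\{b_{k-1}\}+\{a\} \sim \zeta\}$ and project to the last coordinate; a general fiber is (a translate/component of) a degree-$(k-1)$ orbit, namely $|\zeta - \{a\}|$. Since these degree-$(k-1)$ orbits are indexed by the essentially $g$-dimensional family of choices of $a$, a dimension count gives $\dim|\zeta| \ge \dim|\zeta - \{a\}|$ for general $a$ — and in fact the strict inequality $\dim|\zeta| \ge \dim|\zeta-\{a\}| + 1$ when the degree-$(k-1)$ orbit is positive-dimensional, because varying $a$ genuinely moves the cycle. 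Conversely, and this is the direction actually needed: if some degree-$(k-1)$ orbit on $A$ has dimension $\ge k-2$, adding $\{0_A\}$ produces a degree-$k$ orbit of dimension $\ge k-2$; to upgrade to dimension $\ge k-1$ one argues that the $(k-1)$-dimensional bound from Theorem \ref{k-1} is attained only in the "elliptic" situation, which a very general $A$ of dimension $\ge k-1$ avoids. The cleanest route is probably induction on $k$: the base case is Pirola's Theorem \ref{P} (orbits of degree $2$ on a very general abelian threefold, hence of dimension $\ge 2$, are absent — so degree-$2$ orbits have dimension $0$), and the inductive step uses the peeling lemma to reduce a hypothetical $(k-1)$-dimensional degree-$k$ orbit on a very general $g$-fold ($g \ge k-1$) to a positive-dimensional degree-$(k-1)$ orbit on a very general $(g-1)$-fold or on $A$ itself, contradicting $\mathscr{G}(k-1) \le 2k-4 \le g-1$... wait, one must check $g-1 \ge \mathscr{G}(k-1)$, i.e. $g \ge 2k-3$, which is stronger than $g \ge k-1$, so the naive induction is not enough and one genuinely needs the finer analysis of the extremal case.

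The main obstacle, then, is controlling the \emph{extremal} orbits — those of dimension exactly $k-1$ in degree $k$ — rather than merely positive-dimensional ones, since Theorem \ref{2k-2} only rules out positive-dimensional orbits and does so in dimension $2k-2$, whereas here we want a bound valid already in dimension $k-1$. I expect the resolution to mirror Voisin's and Alzati--Pirola's analysis of the structure of extremal orbits: a degree-$k$ orbit of maximal dimension $k-1$ should force the existence of an abelian subvariety (e.g. an elliptic curve, or more generally a positive-dimensional abelian subvariety through which the "moving part" of the cycle factors), via a Hodge-theoretic or infinitesimal argument on the relevant tangent spaces to the orbit inside $\operatorname{Sym}^k A$. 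For a very general abelian variety of dimension $\ge k-1 \ge 2$, no such proper abelian subvariety exists (the only ones are $0$ and $A$, and the cycle cannot move in a $0$-dimensional subvariety while the $A$-part contributes at most the expected amount), giving the contradiction. Concretely, I would: (1) set up the infinitesimal deformation of the orbit and the associated map to $H^0(A,\Omega^1_A)^\vee$; (2) show an orbit of dimension $k-1$ saturates the Alzati--Pirola--Voisin inequality in Theorem \ref{k-1} and extract the forced geometric structure (an abelian subvariety or a factorization through one); (3) invoke very-generality to exclude that structure in dimension $\ge k-1$. Steps (1)--(2) are where the real work lies and where I would lean most heavily on the techniques imported from \cite{V} and adapted to powers of $A$.
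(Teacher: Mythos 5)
Your proposal does not contain a proof; it is a plan whose central steps you yourself flag as unresolved, and the route you sketch is not the one that works. The ``peeling off a point'' comparison between degree-$k$ and degree-$(k-1)$ orbits is the wrong axis of attack: the paper never relates orbits of different degrees here, and, as you concede, the induction it would feed requires $g\geq 2k-3$, far stronger than the hypothesis $g\geq k-1$. The missing idea is to go \emph{down} in the dimension of the abelian variety while going \emph{up} in the dimension of the swept-out locus: normalize the hypothetical $(k-1)$-dimensional orbit (Remark \ref{trick}) to get a $(k-1)$-dimensional suborbit $\mathcal{Z}\subset\mathcal{A}^{k,\,0}$ over a locally complete family of abelian $g$-folds, and apply Proposition \ref{prop1} with $l=g-1$. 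Since $p_\lambda(\mathcal{Z}_s)$ then varies with $s\in S_\lambda(B)$, the union $\bigcup_{s\in S_\lambda(B)}p_\lambda(\mathcal{Z}_s)\subset B^{k,\,0}$ is a $k$-dimensional subvariety of the power of a $(g-1)$-fold foliated by $(k-1)$-dimensional normalized suborbits, i.e.\ a one-parameter family of $(k-1)$-dimensional normalized suborbits of degree $k$. Corollary \ref{1dimfam} (Voisin's Proposition 4.2 combined with the Alzati--Pirola vanishing, Proposition \ref{vanish}) forbids such a family on any abelian variety. That single contradiction is the entire proof; no classification of ``extremal orbits saturating Theorem \ref{k-1}'' is needed.

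The one genuinely delicate point --- and the only place where anything like your steps (1)--(3) appears --- is verifying the non-degeneracy condition $(*)$ for $l=g-1$ when $g=k-1$ exactly (for $g>k-1$ it is a trivial tangent-space dimension count). There the argument is: if $(*)$ fails, every tangent space $T_{\mathcal{Z}_s,z}$ has the form $(T_{\mathcal{A}_s})_M$ with $M\in\C^{k,\,0}$, yielding a map $(\mathcal{Z}_s)_{\textup{sm}}\to\mathbb{P}(\C^{k,\,0})$ that is generically finite because $\mathcal{Z}_s$ is not stabilized by an abelian subvariety of $\mathcal{A}_s^k$ for $\mathcal{A}_s$ simple; its image therefore contains a real point $M$, and $(T_{\mathcal{A}_s})_M$ is not isotropic for $\omega_k$ by Lemma \ref{absub}, contradicting Proposition \ref{vanish}. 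Your proposal gestures at ``an infinitesimal argument forcing an abelian subvariety,'' but deploys it in the wrong place (to classify maximal-dimensional orbits rather than to verify $(*)$), and without this specific mechanism neither your step (2) nor your endgame closes.
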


Recall that the degree of irrationality $\text{irr}(X)$ of a variety $X$ is the minimal degree of a dominant rational map $X\dashrightarrow \mathbb{P}^{\dim X}$. The previous theorem provides the following improvement of the Alzati-Pirola bound $\text{irr}(A)\geq \dim A+1$ on the degree of irrationality of abelian varieties (see \cite{AP2}):

\begin{coroln}[\ref{sommese}]
If $A$ is a very general abelian variety of dimension at least $3$, then 
$$\textup{irr}(A)\geq \dim A+2.$$
\end{coroln}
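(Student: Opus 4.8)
The plan is to deduce the bound $\mathrm{irr}(A)\geq \dim A+2$ from Theorem \ref{no(k-1)} by a standard incidence-correspondence argument relating degree-$d$ maps to $\mathbb{P}^{\dim A}$ with orbits for rational equivalence. Set $g=\dim A$ and suppose for contradiction that there is a dominant rational map $\phi\colon A\dashrightarrow \mathbb{P}^g$ of degree $d\leq g+1$. After resolving indeterminacy we may assume $\phi$ is a morphism on a dense open $U\subseteq A$; the general fiber $\phi^{-1}(t)$ for $t\in\mathbb{P}^g$ is a reduced set of $d$ points $\{a_1(t),\ldots,a_d(t)\}$. The key observation is that all $d$ points in such a fiber are rationally equivalent up to a fixed translation: since $\mathbb{P}^g$ has trivial Chow group of zero-cycles in the sense that any two points are rationally equivalent, and $\phi_*$ respects rational equivalence, the cycle class $\{a_1(t)\}+\cdots+\{a_d(t)\}\in CH_0(A)$ is \emph{constant} as $t$ varies over a general point of $\mathbb{P}^g$ (this is the classical ``Mumford-type'' argument: a non-trivial family of zero-cycles parametrized by a rational variety is in fact constant, because $CH_0(\mathbb{P}^g)=\mathbb{Z}$ and one pulls back a rational equivalence). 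Hence the fibers of $\phi$ all lie in a single orbit $|\{a_1\}+\cdots+\{a_d\}|$ of degree $d$, and since $\phi$ is dominant with $g$-dimensional source and $g$-dimensional target, this orbit has dimension $g - g = 0$? No — rather, the fibers sweep out $A$, so the orbit, which contains every fiber, has dimension exactly $g - \dim(\text{image}) $ only if the fibers were all distinct cycles; instead, the union of fibers over the $g$-dimensional base is all of $A$, and they all represent the \emph{same} class, so the orbit $|\{a_1\}+\cdots+\{a_d\}|\subseteq \mathrm{Sym}^d A$ has dimension at least $g$: indeed the assignment $t\mapsto \phi^{-1}(t)\in \mathrm{Sym}^d A$ is generically finite onto its image (two distinct general points of $\mathbb{P}^g$ have distinct fibers), so this image is $g$-dimensional and sits inside the orbit.

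Now I would feed this into Theorem \ref{no(k-1)}. We have produced, on a very general abelian variety $A$ of dimension $g\geq 3$, a positive dimensional orbit of degree $d\leq g+1$ whose dimension is at least $g$. But applying Theorem \ref{no(k-1)} with $k=d$: for $A$ very general of dimension at least $d-1$, orbits of degree $d$ have dimension at most $d-2 \leq g-1$. The inequality $g \leq d-2 \leq (g+1)-2 = g-1$ is absurd. A small technical point: Theorem \ref{no(k-1)} requires $\dim A\geq d-1$, i.e. $g\geq d-1$, which holds since $d\leq g+1$ would a priori allow $d=g+1$, giving $g\geq g$, fine — so the hypothesis is met in all cases $d\leq g+1$. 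Therefore no such $\phi$ exists and $\mathrm{irr}(A)\geq g+2$.

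The one genuine subtlety — and the step I expect to require the most care — is the claim that the fibers of a degree-$d$ map to $\mathbb{P}^g$ form a \emph{positive dimensional orbit} in the precise sense of the paper (fiber of $\mathrm{Sym}^d A\to CH_0(A)$), including that this orbit is honestly $g$-dimensional rather than merely a positive dimensional but lower-dimensional piece. For the constancy of the class one invokes the Roĭtman/Mumford principle that a zero-cycle-valued map from a rational (or unirational) variety is constant modulo rational equivalence; one must check the map $\mathbb{P}^g\dashrightarrow CH_0(A)$, $t\mapsto \sum_i\{a_i(t)\}$, is of this type, which follows because over $\mathbb{P}^g$ the ``universal fiber'' is cut out inside $\mathbb{P}^g\times A$ and the two natural zero-cycles $\sum\{a_i(t)\}$ and $\sum\{a_i(t_0)\}$ become rationally equivalent after pulling back along a rational curve in $\mathbb{P}^g$ joining $t$ to $t_0$, using again $CH_0(\mathbb{P}^1)=\mathbb{Z}$. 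That the resulting orbit has dimension $\geq g$ follows from the fact that $\phi$, being dominant of finite degree, has the property that the map $t\mapsto \phi^{-1}(t)\in\mathrm{Sym}^d A$ is generically injective, so its image — contained in the orbit — has dimension $g$. Once these points are in place, the contradiction with Theorem \ref{no(k-1)} is immediate.
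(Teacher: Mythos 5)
Your argument is correct and is essentially the paper's intended proof: a dominant rational map $A\dashrightarrow\mathbb{P}^{g}$ of degree $d\le g+1$ has all its general fibers rationally equivalent (since $CH_0(\mathbb{P}^g)=\mathbb{Z}$), and the generically injective map $t\mapsto\phi^{-1}(t)$ produces a $g$-dimensional suborbit of degree $d$, contradicting Theorem \ref{no(k-1)}. The only point worth tightening is that Theorem \ref{no(k-1)} is stated for $k\ge 4$; for $d\le g$ the contradiction already follows from Voisin's Theorem \ref{k-1} (dimension $\le d-1\le g-1<g$), so the new theorem is only needed for $d=g+1\ge 4$, where its hypotheses are satisfied.
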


In the last part of this introduction we will sketch Voisin's proof of Theorem \href{Vmainthm}{\ref{Vmainthm}} and give an outline of the paper. Voisin considers what she calls \textit{naturally defined subsets of abelian varieties} (see \cite{V} Definition 2.1). Consider $\mathcal{A}/S$, a locally complete family of abelian varieties of dimension $g$ with a fixed polarization type $\theta$. There are subvarieties $S_\lambda\subset S$ along which $\mathcal{A}_s\sim \mathcal{B}_s^{\lambda}\times \mathcal{E}_s^{\lambda}$, where $\mathcal{B}^{\lambda}/S_\lambda$ is a family of abelian varieties of dimension $(g-1)$, and $\mathcal{E}^\lambda/S_\lambda$ is a family of elliptic curves. The index $\lambda$ keeps track of the isogeny. Let $S_\lambda(B):=\{s\in S_\lambda: \mathcal{B}^\lambda_s\cong B\}$, and $p_\lambda: \mathcal{A}_s\to \mathcal{B}^\lambda_s\times \mathcal{E}_{s}^{\lambda} \to \mathcal{B}^\lambda_s$ be the composition of the projection to $\mathcal{B}_s^\lambda$ with the isogeny.\\

Voisin shows that, given a naturally defined subset $\Sigma_{\mathcal{A}}\subsetneq \mathcal{A}$, there is a locus $S_\lambda\subset S$ such that the image of the restriction of $p_\lambda: \mathcal{A}_s\to \mathcal{B}^\lambda_s\cong B$ to $\Sigma_{\mathcal{A}_{s}}$ varies with $s\in S_\lambda(B)$, for a generic $B$ in the family $\mathcal{B}^{\lambda}$. This shows that if $\Sigma_{B}$ is at most $d$-dimensional for a generic $(g-1)$-dimensional abelian variety $B$ regardless of polarization type and $0<d<g-1$, then $\Sigma_A$ is at most $(d-1)$-dimensional for a very general abelian variety $A$ of dimension $g$ regardless of polarization type.\\

She proceeds to show that the set
 $$\bigcup_{i=1}^k \text{pr}_i(|k\{0_A\}|)=\left\{a_1\in A: \exists a_2,\ldots, a_k\in A\text{ such that }\sum_{i=1}^k\{a_i\}= k\{0_A\}\in CH_0(A)\right\}$$
 is contained in a naturally defined subset $A_k\subset A$ and that $\dim A_k\leq k-1$. In particular, $A_k\neq A$ if $\dim A\geq k$. It follows from the discussion above that 
 $$\dim\bigcup_{i=1}^k \text{pr}_i(|k\{0_A\}|)=0$$ if $\dim A\geq 2k-1$ and $A$ is very general. An induction argument finishes the proof by passing from the orbit $|k\{0_A\}|$ to general orbits of degree $k$. Voisin's results are very similar in spirit to those of \cite{AP} but a key difference is that the latter is concerned with subvarieties of $A^k$ and not of $A$. We will see that this has important technical consequences. While Lemma 2.5 in \cite{V} shows that the restriction of the projection $p^\lambda_{s}: \mathcal{A}^\lambda_s\to \mathcal{B}^\lambda_s$ to $\Sigma_{\mathcal{A}^\lambda_s}$ is generically finite on its image, this becomes a serious sticking point in \cite{AP} (see Lemmas 6.8 to 6.10 of \cite{AP}). One of the main innovations of this article is a way to obtain an analogous generic finiteness result in the setting of subvarieties of $A^k$. This allows us to carry out an induction argument to prove Conjecture \ref{Vweakconj}.\\
 
Consider $\mathcal{A}/S$, a locally complete family of abelian $g$-folds, along with a family $\mathcal{Z}\subset \mathcal{A}_S^k$ of irreducible $d$-dimensional subvarieties. Assume further that for every $s\in S$, the subvariety $\mathcal{Z}_s$ lies in the union of positive-dimensional fibers of the map $\mathcal{A}_s^k\to CH_0(\mathcal{A}_s)$ (i.e. $\mathcal{Z}$ is foliated by positive-dimensional suborbits, see Definitions \ref{suborbit} and \ref{famsuborbits}). As in \cite{V}, we can specialize to loci $S_\lambda$ along which $\mathcal{A}_s$ is isogenous to $\mathcal{B}^\lambda_s\times \mathcal{E}^\lambda_s$, where $\mathcal{B}^\lambda/S_\lambda$ is a locally complete family of abelian $(g-1)$-fold and $\mathcal{E}^\lambda_s$ is a locally complete family of elliptic curves. Let $S_\lambda(B)\subset S_\lambda$ be the locus where $\mathcal{B}^\lambda_s$ is isomorphic to a fixed $B$ and the map
$$p_\lambda: \mathcal{A}_s^k\to (\mathcal{B}_s^\lambda\times \mathcal{E}_s^\lambda)^k\to (\mathcal{B}_s^\lambda)^k$$
be the $k$-fold cartesian product of the composition of the projection to $\mathcal{B}_s^\lambda$ with the isogeny. Our main technical result is Proposition \ref{prop1}, which is analogous to Proposition 2.4 of \cite{V}. It states that there is a $\lambda$ such that the variety $p_\lambda(\mathcal{Z}_s)\subset B^k$ varies with $s\in S_\lambda(B)$, for generic $B$ in the family $\mathcal{B}^\lambda$. This requires some technical non-degeneracy conditions on the family $\mathcal{Z}$ which are not obviously satisfied in our situation.\\

Setting aside these technicalities, our strategy to prove Conjecture \ref{Vweakconj} is the following: Consider $\mathcal{A}_1/S_1$, a locally complete family of abelian $(2k-1)$-folds, and suppose that the set 
$$\{s\in S_1: \mathcal{A}_{1,s} \text{ contains a }k\text{-gonal curve}\}\subset S_1$$
is not contained in a countable union of Zariski-closed subsets of $S_1$. Up to passing to a generically finite cover over $S_1$, we get a subvariety $\mathcal{Z}_1\subset(\mathcal{A}_{1})_{S_1}^k$ whose fibers over $S_1$ are irreducible curves, and such that $\mathcal{Z}_{1,s}$ is contained in a fiber of $\mathcal{A}_{1,s}^k\to CH_0(\mathcal{A}_{1,s})$ for all $s\in S_1$. Moreover, using translation we can arrange for $\mathcal{Z}_1$ to be in the kernel of the summation map $(\mathcal{A}_1)_{S_1}^{k}\to \mathcal{A}_1$. In light of the projection argument above, we can find a locally complete family of abelian $(2k-2)$-folds $\mathcal{A}_2/S_{2}$, and a subvariety $\mathcal{Z}_2\subset (\mathcal{A}_{2})_{S_2}^k$ whose fibers over $S_{2}$ are irreducible surfaces, and such that $\mathcal{Z}_{2,s}$ is contained in the union of positive-dimensional fibers of $(\mathcal{A}_{2,s})^k\to CH_0(\mathcal{A}_{2,s})$ for every $s\in S_{2}$.\\

We can hope to proceed by induction to obtain for each $1\leq i \leq 2k-2$ a locally complete family of abelian $(2k-i)$-folds $\mathcal{A}_i/S_i$, along with subvarieties $\mathcal{Z}_i\subset (\mathcal{A}_i)_{S_i}^k$ satisfying:
\begin{itemize}
\item The fibers of $\mathcal{Z}_i/S_i$ are irreducible of dimension $i$,
\item $\mathcal{Z}_{i,s}$ is contained in the union of the positive-dimensional fibers of $\mathcal{A}_{i,s}^k\to CH_0(\mathcal{A}_{i,s})$ for every $s\in S_i$,
\item The subvariety $\mathcal{Z}_i$ is contained in the kernel of the summation map $\mathcal{A}_i^k\to \mathcal{A}_i$.
\end{itemize}

It would then follow that $\mathcal{Z}_{2k-2}\subset (\mathcal{A}_{2k-2})_{S_{2k-2}}^k$ has fibers of dimension $2k-2$ over $S_{2k-2}$. Since the kernel of the summation map $(\mathcal{A}_{2k-2})_{S_{2k-2}}^k\to \mathcal{A}_{2k-2}$ has dimension $2k-2$, the subvariety $\mathcal{Z}_{2k-2}$ must coincide with this kernel. Using translation, we see that every point of $(\mathcal{A}_{2k-2})_{S_{2k-2}}^k$ is contained in a positive-dimensional orbit. This contradicts results of Ro\u\i tman on rational equivalence on surfaces with $p_g\neq 0$ (see \cite{R2}) and proves Conjecture \ref{Vweakconj}.\\

In Section \ref{prelim}, we will fix some notation and state some important facts about orbits for rational equivalence on surfaces with $p_g\neq 0$ and abelian varieties. The technical heart of the paper consists of Sections \ref{specialization} and \ref{salvaging}. In Section \ref{specialization}, we prove that if the family of irreducible subvarieties $\mathcal{Z}\subset\mathcal{A}_S^k$ satisfies certain non-degeneracy conditions, then there is a choice of $S_\lambda$ such that $p_\lambda(\mathcal{Z}_s)\subset B^k$ varies with $s\in S_\lambda(B)$, for a generic $B$ in the family $\mathcal{B}^\lambda$. Note that for use in Section \ref{salvaging} we will need this result for specializations to loci of abelian varieties containing an abelian $l$-fold for any $1\leq l\leq g/2$. Proposition \ref{prop1} is thus stated in this generality. In Section \ref{salvaging}, we show how to use the inductive nature of the argument sketched above to obtain the non-degeneracy conditions of the families of varieties $\mathcal{Z}_i\subset (\mathcal{A}_i)^k_{S_i}$ appearing at each step of the induction. To achieve this, we consider simultaneously several specializations and projections of the type $p_\lambda$. It allows us to complete the proof of Theorem \ref{2k-2}. Finally, in Section \ref{further} we mention applications of our results to other measures of irrationality for abelian varieties, improving on the Alzati-Pirola bound on the degree of irrationality of abelian varieties. We also discuss an existence result for positive dimensional orbits on abelian varieties and formulate Conjecture \ref{Vstrongconj}, a strengthening of Voisin's conjecture which would follow from another conjecture formulated in \cite{V}.

\vspace{1em}

\section{Preliminaries}\label{prelim}
\vspace{0.5em}

In this section we fix some notation and establish some facts about positive dimensional orbits for rational equivalence on abelian varieties.
\subsection{Notation}
\vspace{1em}
We call variety a locally closed subset of a reduced scheme of finite type over $\mathbb{C}$. In what follows, $X$ is a smooth projective variety, $\mathcal{X}/S$ is a family of such varieties, $A$ is an abelian $g$-fold with polarization type $\theta$, and $\mathcal{A}/S$ is a family of such abelian varieties. In an effort to simplify notation, we will write $\mathcal{X}^k$ instead of $\mathcal{X}_S^k$ to denote the $k$-fold fiber product of $\mathcal{X}$ with itself over $S$. Moreover, $CH_0(X)$ will denote the Chow group of zero-cycles of a smooth projective variety $X$ with $\mathbb{Q}$-coefficients.\\

Given $\mathcal{A}/S$, a family of abelian $g$-folds with polarization type $\theta$, we will abuse notation to denote by $\varphi_{\mathcal{A}}$ both the morphism from $S$ to the moduli stack of abelian $g$-folds with polarization type $\theta$ and the $k$-fold cartesian product of the natural map between $\mathcal{A}$ and the universal family over this stack, for any $k\in \mathbb{Z}_{>0}$. It will be clear from the context what is the meaning of $\varphi_{\mathcal{A}}$.

\begin{definition}
A family $\mathcal{A}/S$ of abelian $g$-folds with polarization type $\theta$ is locally complete if the induced morphism $\varphi_{\mathcal{A}}$ from $S$ to the moduli stack of abelian $g$-folds with polarization type $\theta$ is generically finite. It is almost locally complete if this morphism is dominant. 
\end{definition}

In particular, if $\mathcal{A}/S$ is an almost locally complete family of abelian varieties, then $\varphi_{\mathcal{A}}(\mathcal{A})/\varphi_{\mathcal{A}}(S)$ is a locally complete family of abelian varieties. Note that the following remark will allow us to only ever deal with varieties and avoid stacks entirely.

\begin{remark}\label{convention}
In many of our arguments we will consider a family of varieties $\mathcal{X}\to S$ and a subvariety $\mathcal{Z}\subset \mathcal{X}$ such that $\mathcal{Z}\to S$ is flat with irreducible fibers. We will often need to base change by a generically finite morphism $S'\to S$. To limit the growth of notation, we will denote the base changed family by $\mathcal{X}\to S$ again. Moreover, if $S_\lambda\subset S$ is a Zariski closed subset, the base change of $S_\lambda$ under $S'\to S$ will also be denoted $S_\lambda$. Note that this applies also to the statements of theorems. For instance, if we say a statement holds for a family $\mathcal{X}/S$, we mean that it holds for some family $\mathcal{X}_{S'}/S'$, where $S'\to S$ is generically finite.
\end{remark}

\vspace{1em}
\subsection{Suborbits}
\vspace{1em}
To avoid dealing with countable unions of subvarieties, it is often convenient to consider subvarieties of orbits. This motivates the following definition:
\begin{definition}\label{suborbit}
A suborbit of degree $k$ for $X$ is a Zariski closed subset of $X^k$ contained in a fiber of $X^k\to CH_0(X)$. A suborbit of degree $k$ for $\mathcal{X}$ is a Zariski closed subset $\mathcal{Z}\subset \mathcal{X}^k$ such that $\mathcal{Z}_s$ is a suborbit of $\mathcal{X}_s^k$ for every $s\in S$.

\end{definition}
The notion of suborbit is closely related to but not to be confused with that of constant cycle subvarieties in the sense of Huybrechts (see \cite{H}). Indeed, a suborbit of degree $1$ is exactly the analogue of a constant cycle subvariety as defined by Huybrechts for K3 surfaces. Nonetheless, a suborbit of degree $k$ for $X$ need not be a constant cycle subvariety of $X^k$; in the former case we consider rational equivalence of cycles in $X$, while in the latter rational equivalence of cycles in $X^k$.\\

We will not only be interested in suborbits but in families of suborbits and subvarieties of $X^k$ foliated by suborbits. An $r$-parameter family of effective $d$-dimensional cycles on a variety $X$ is an $r$-dimensional subvariety of a Chow variety $\text{Chow}_d(X)$ which parametrizes effective pure $d$-dimensional cycles with a given cycle class. For simplicity, we will leave the cycle class unspecified in the notation for these Chow varieties. Similarly, an $r$-parameter family of effective $d$-dimensional cycles on $\mathcal{X}/S$ is an irreducible $r$-dimensional subvariety $\mathcal{D}\subset \text{Chow}_d(\mathcal{X}/S)$ with relative dimension $r$ over $S$. The subvariety $\mathcal{D}_s\subset \mathcal{D}$ parametrizes cycles with support lying over a point $s\in S$.





\begin{definition}\label{famsuborbits}
$\;$
\begin{enumerate}
\item
A locally closed subset $Z\subset X^k$ is foliated by $d$-dimensional suborbits if
$$\dim |z|\cap Z\geq d,\qquad \forall z\in Z.$$

\item Similarly, a locally closed subset $\mathcal{Z}\subset \mathcal{X}^k$ is foliated by $d$-dimensional suborbits if $\mathcal{Z}_s$ is foliated by $d$-dimensional suborbits for all $s\in S$.

\item A family of $d$-dimensional suborbits of degree $k$ for $X$
 is a family $D\subset \textup{Chow}_d(X^k)$ of $d$-dimensional effective cycles on $X^k$ whose supports are suborbits.

\item Similarly, a family of $d$-dimensional suborbits of degree $k$ for $\mathcal{X}/S$ is a family $\mathcal{D}\subset \textup{Chow}_d(\mathcal{X}^k/S)$ of effective $d$-dimensional cycles on $\mathcal{X}^k$, such that $\mathcal{D}_s\subset\textup{Chow}_d(\mathcal{X}_s^k)$ is a family of $d$-dimensional suborbits of degree $k$ for $\mathcal{X}_s$.
\end{enumerate}
\end{definition}

Note that given $D\subset \text{Chow}_d(X^k)$, an $r$-parameter family of $d$-dimensional suborbits for a variety $X$, and $\mathcal{Y}\to D$, the corresponding family of cycles, the set $\bigcup_{t\in D}\mathcal{Y}_t\subset X^k$ is foliated by $d$-dimensional suborbits. Yet, its dimension might be less than $(r+d)$. Conversely, any subvariety $Z\subset X^k$ foliated by $d$-dimensional suborbits arises in such a fashion from a family of $d$-dimensional suborbits after possibly passing to a Zariski open subset. Indeed, by work of Ro\u{\i}tman (see \cite{R1}), the set
$$\Delta_{\text{rat}}=\left\{((x_1,\ldots, x_k),(x_1',\ldots, x_k'))\in X^k\times X^k: \sum_{i=1}^k x_i=\sum_{i=1}^k x_i'\in CH_0(X)\right\}\subset X^k\times X^k$$
is a countable union of Zariski closed subsets. Consider
$$\pi: \Delta_{\text{rat}}\cap Z\times Z\to Z,$$
the restriction of the projection of $Z\times Z$ onto the first factor to $\Delta_{\text{rat}}\cap Z\times Z$. Given an irreducible component $\Delta'$ of $\Delta_{\text{rat}}\cap Z\times Z$ which dominates $Z$ and has relative dimension $d$ over $Z$, we can consider the map from an open set in $Z$ to an appropriate Chow variety of $X^k$ taking $z\in Z$ to the cycle $[\Delta_z']$. Letting $D$ be the image of this morphism, we get a family of $d$-dimensional suborbits that covers an open in $Z$.\\

\subsection{Subvarieties foliated by suborbits}

For a variety $X$, we denote by $\text{pr}_i: X^k\to X$ the projection to the $i^{\text{th}}$ factor. Given a form $\omega\in H^0(X,\Omega^q)$, we let $\omega_k:=\sum_{i=1}^k \text{pr}_i^*\omega\in H^0(X^k, \Omega^q)$. For the sake of simplicity, we mostly deal with $X^k$ rather than $\text{Sym}^kX$ and we take the liberty to call points of $X^k$ effective zero-cycles of degree $k$. In Corollary 3.5 of \cite{AP}, the authors show the following generalization of Mumford's theorem on rational equivalence of zero-cycles on surfaces with $p_g\neq 0$ (see \cite{M}):

\begin{proposition}[Alzati-Pirola, Corollary 3.5 in \cite{AP}]\label{vanish}
Let $D$ be an an $r$-parameter family of suborbits of degree $k$ for a variety $X$. Let $\mathcal{Z}\to D$ be the corresponding family of cycles and $g: \mathcal{Z}\to X^k$ the natural map. If $\omega\in H^0(X,\Omega^q)$ and $q>r$, then $g^*(\omega_k)=0.$
\end{proposition}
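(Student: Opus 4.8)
The plan is to reduce to Mumford's classical theorem on the vanishing of the pullback of a holomorphic form to a family of rationally equivalent cycles, by building out of the family $D$ a single family of zero-cycles on $X$ (not on $X^k$) parametrized by an $r$-dimensional base, to which the classical statement applies. Concretely, let $\mathcal{Z} \to D$ be the family of $d$-dimensional cycles and $g : \mathcal{Z} \to X^k$ the natural map; composing with each projection $\mathrm{pr}_i : X^k \to X$ gives maps $g_i := \mathrm{pr}_i \circ g : \mathcal{Z} \to X$. I want to show $g^*(\omega_k) = \sum_{i=1}^k g_i^*(\omega) = 0$ in $H^0(\mathcal{Z}, \Omega^q)$ whenever $q > r$.

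First I would recall the mechanism behind Mumford's theorem in the form needed here: if $W$ is a variety and $\Gamma \subset W \times X$ is a correspondence such that, for all $w$ in a dense subset of $W$, the cycles $\Gamma_w$ are all rationally equivalent on $X$ (i.e. $\Gamma$ defines a "constant" family in $CH_0(X)$), then for any $\omega \in H^0(X, \Omega^q)$ the pullback of $\omega$ along the map $\Gamma \to X$ vanishes provided $\dim W < q$ — more precisely, $\Gamma^*\omega$ has no component with positive degree along the $W$-direction, and restricting to the $X$-fibers one gets the vanishing on $\Gamma$ itself after accounting for dimensions. The point is that $D$ here plays the role of $W$ with $\dim D = r$, and the defining property of a family of suborbits of degree $k$ is exactly that the associated $X^k$-valued family is constant in $CH_0(X)$ after summing the $k$ coordinates: for $t, t' \in D$ the cycles $g_*[\mathcal{Z}_t]$ and $g_*[\mathcal{Z}_{t'}]$, pushed forward by the summation $X^k \to \mathrm{Sym}^k X$, give the same class in $CH_0(X)$. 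Thus the composite family $\bigsqcup_i g_i(\mathcal{Z}_t) \subset X$, viewed as a family of effective zero-cycles of degree $k \cdot (\text{something})$ on $X$ parametrized by $D$ together with the $d$ internal parameters of each $\mathcal{Z}_t$, is a family that is constant in $CH_0(X)$ along the $D$-directions.

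The key computation is then a dimension count on forms. On $\mathcal{Z}$, decompose $g^*(\omega_k)$ according to how many "derivatives" point along the $D$-directions versus the fiber directions: since the fibers $\mathcal{Z}_t$ have dimension $d$ and the base $D$ has dimension $r$, any $q$-form with $q > r$ must involve at least one fiber direction, but by the rational equivalence one shows (this is the heart of Mumford's/Ro\u{\i}tman's argument via the Bloch–Srinivas / decomposition-of-the-diagonal type reasoning, or directly via the fact that $CH_0$ being "constant" kills the image of the relevant cycle map on holomorphic forms) that the part of $g^*(\omega_k)$ with positive fiber-degree already vanishes because it would have to come from a nonzero class in $H^0$ of a $0$-dimensional Chow-theoretic fiber; and the remaining part, which is pulled back from $D$, is a $q$-form on an $r$-dimensional variety with $q > r$, hence zero. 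I would organize this as: (1) set up $\Delta_{\mathrm{rat}} \cap \mathcal{Z} \times_D \mathcal{Z}$-type incidence to witness the constancy; (2) invoke the standard fact that a nonzero holomorphic $q$-form cannot be "supported on fibers" of a family that is constant in $CH_0$; (3) conclude by the dimension bound $q > r = \dim D$.

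The main obstacle I expect is step (2): making precise, and correctly attributing, the assertion that constancy of the family in $CH_0(X)$ forces the "vertical" part of $g^*(\omega_k)$ to vanish — this is exactly the content of Mumford's theorem (as extended by Ro\u{\i}tman and by Alzati–Pirola) and the cleanest route is probably to cite the relevant statement in \cite{M}, \cite{R1}, \cite{AP} rather than reprove it, reducing the present proposition to a bookkeeping of which variety plays the role of the parameter space and verifying that the number $r$ here matches the parameter-dimension there. A secondary technical point is that $D$ parametrizes cycles on $X^k$ whose supports are suborbits, so I must be careful to pass correctly between the cycle $g_*[\mathcal{Z}_t] \in CH_0(\mathrm{Sym}^k X)$ and the individual coordinate projections; since $\omega_k = \sum_i \mathrm{pr}_i^*\omega$ is symmetric, this is harmless, but it should be stated explicitly. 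Once these are in place the proof is short.
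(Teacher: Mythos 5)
First, a point of reference: the paper does not prove this proposition at all --- it is quoted from Alzati--Pirola (Corollary 3.5 of \cite{AP}) and used as a black box --- so the only question is whether your sketch stands on its own. Your architecture is the standard Mumford/Ro\u{\i}tman/Bloch--Srinivas mechanism and is the right skeleton: kill $g^*(\omega_k)$ in the directions tangent to the fibers of $p:\mathcal{Z}\to D$ using rational equivalence, conclude that it is locally a pullback of a $q$-form from the $r$-dimensional base $D$, and finish because $\Omega^q_D=0$ for $q>r$.

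Two genuine problems remain. (1) You misstate the defining property of a family of suborbits: it is \emph{not} that $g_*[\mathcal{Z}_t]$ and $g_*[\mathcal{Z}_{t'}]$ give the same class in $CH_0(X)$ for $t\neq t'$, nor is the family ``constant in $CH_0(X)$ along the $D$-directions.'' It is the opposite: for each fixed $t$ the support of $\mathcal{Z}_t$ lies in a single fiber of $X^k\to CH_0(X)$, so $\sum_i\{g_i(z)\}$ is constant as $z$ moves \emph{within a fiber} of $p$ and in general varies with $t\in D$. Your later dimension count silently uses the correct orientation, but as written the setup is backwards. (2) The phrase ``the part of $g^*(\omega_k)$ with positive fiber-degree vanishes'' is not meaningful without a splitting of $T_{\mathcal{Z}}$; what must actually be proved is that $\iota_v\,g^*(\omega_k)=0$ for \emph{every} vertical vector $v$, i.e. that $g^*(\omega_k)$ lies in the image of $p^*\Omega^q_D$. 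This is strictly stronger than the fiberwise Mumford statement, which only gives vanishing of the \emph{restriction} of $g^*(\omega_k)$ to each $\mathcal{Z}_t$ (vanishing on $q$-tuples of vertical vectors), and it is exactly the content of \cite{AP}. The clean way to get it is the relative spreading argument: subtract a multisection $\sigma:D\to\mathcal{Z}$ to obtain a correspondence in $\mathcal{Z}\times X$ that is fiberwise zero in $CH_0(X)$, deduce that an integer multiple of it is rationally equivalent to a cycle supported over a proper closed subset of $\mathcal{Z}$, hence acts by zero on holomorphic forms over a dense open, so that $g^*(\omega_k)=p^*\sigma^*g^*(\omega_k)=0$ for $q>r$. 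Since you ultimately propose to cite \cite{M}, \cite{R1}, \cite{AP} for precisely this step, your reduction is close to circular; that is defensible only because the paper itself defers to \cite{AP}, but points (1) and (2) must be repaired for the sketch to count as an argument.
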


\begin{definition}
Given an abelian variety $A$, the zero-cycle $(a_1,\ldots, a_k)\in A^k$ or the orbit $|\{a_1\}+\ldots+\{a_k\}|$ is called normalized if $a_1+\ldots+a_k=0_A$. We write $A^{k,\,0}$ for the kernel of the summation map, i.e. the set of normalized effective zero-cycles of degree $k$.
\end{definition}

\begin{remark}\label{trick}
If $|\{a_1\}+\ldots+\{a_k\}|$ as above is a normalized $d$-dimensional orbit of degree $k$ and $a\in A$ is any point, then $|\{ka\}+\{a_1-a\}+\ldots+\{a_k-a\}|$ is a normalized orbit of dimension at least $d$. This was observed by Voisin in Example 6.3 of \cite{V}.
\end{remark}

In Proposition 4.2 of \cite{V}, Voisin shows that if $A$ is an abelian variety and $Z\subset A^k$ is such that $\omega_k|_{Z}=0$ for all $\omega\in H^0(A,\Omega^q)$ and all $q\geq 1$, then $\dim Z\leq k-1$. Note that $\omega_k=m^*(\omega)$ for any $\omega\in H^0(A,\Omega^1)$, where $m: A^k\to A$ is the summation map. Hence, Voisin's result along with Proposition \ref{vanish} implies the following:

\begin{corollary}\label{1dimfam}
An abelian variety does not admit a one-parameter family of $(k-1)$-dimensional normalized suborbits of degree $k$.
\end{corollary}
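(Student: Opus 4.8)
\textbf{Proof proposal for Corollary \ref{1dimfam}.}

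The plan is to argue by contradiction using a dimension count on the forms $\omega_k$, feeding the hypothetical family into Proposition \ref{vanish} and then into Voisin's Proposition 4.2 from \cite{V}. Suppose $A$ is an abelian variety admitting a one-parameter family $D\subset \textup{Chow}_{k-1}(A^k)$ of $(k-1)$-dimensional normalized suborbits of degree $k$; that is, $D$ is $1$-dimensional, and letting $\mathcal{Z}\to D$ be the corresponding family of cycles with natural map $g\colon \mathcal{Z}\to A^k$, every cycle in the family is supported on a suborbit lying in $A^{k,\,0}$. First I would apply Proposition \ref{vanish} with $r=1$: for any $q\geq 2$ and any $\omega\in H^0(A,\Omega^q)$ we get $g^*(\omega_k)=0$. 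This handles all forms of degree at least $2$, so it remains to treat $q=1$.

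The case $q=1$ is where normalization enters. For $\omega\in H^0(A,\Omega^1)$ we have $\omega_k=m^*\omega$, where $m\colon A^k\to A$ is the summation map, as noted in the excerpt just before the corollary. Since the family consists of \emph{normalized} suborbits, the composite $m\circ g\colon \mathcal{Z}\to A$ is constant (equal to $0_A$), so $g^*(\omega_k)=g^*m^*\omega=(m\circ g)^*\omega=0$ for every $\omega\in H^0(A,\Omega^1)$ as well. Thus $g^*(\omega_k)=0$ for all $\omega\in H^0(A,\Omega^q)$ and all $q\geq 1$. Now I would pass from the family to an honest subvariety of $A^k$: let $Z\subset A^k$ be the image of $g$, equivalently (after shrinking $D$ and $Z$ if needed, per Remark \ref{convention} and the discussion following Definition \ref{famsuborbits}) the union $\bigcup_{t\in D}\mathcal{Y}_t$ of the supports. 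On the smooth locus of $Z$ the pullback of $\omega_k$ along the inclusion is computed by factoring through $g$, so $\omega_k|_{Z}=0$ for all $\omega\in H^0(A,\Omega^q)$, $q\geq 1$. Voisin's Proposition 4.2 of \cite{V} then gives $\dim Z\leq k-1$.

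Finally I would extract the contradiction from the geometry of the family. A one-parameter family of $(k-1)$-dimensional cycles sweeps out a subvariety of dimension at least $k-1$, and the point is that it must sweep out \emph{more} than $k-1$ dimensions, or else the family is not genuinely moving the suborbit. Concretely, if $\dim Z= k-1$ then a generic fiber of $g\colon \mathcal{Z}\to Z$ is $(\dim\mathcal{Z}-(k-1))=1$-dimensional, so through a generic point of $Z$ there pass infinitely many members of the family, forcing (since each member is an irreducible $(k-1)$-dimensional suborbit inside the $(k-1)$-dimensional $Z$) the generic member to be an irreducible component of $Z$; as $D$ is irreducible of dimension $1>0$ this is impossible unless the members are all equal, contradicting that $D$ is a genuine $1$-parameter family. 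Hence $\dim Z\geq k$, contradicting $\dim Z\leq k-1$. I expect this last step — ruling out the degenerate possibility that the family is ``constant'' in the sense that all its members coincide as cycles — to be the only genuinely delicate point; it amounts to checking that the phrase ``one-parameter family'' in the statement is being used in the honest sense that $D\to \textup{Chow}_{k-1}(A^k)$ is injective on a dense open, which is part of the definition of a family of suborbits in Definition \ref{famsuborbits}, so it should be immediate once unwound. Everything else is a direct concatenation of Proposition \ref{vanish}, the identity $\omega_k=m^*\omega$ together with normalization, and Voisin's Proposition 4.2.
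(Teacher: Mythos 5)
Your proof is correct and follows exactly the route the paper intends: Proposition \ref{vanish} for $q\geq 2$, the identity $\omega_k=m^*\omega$ plus normalization for $q=1$, Voisin's Proposition 4.2 to bound $\dim Z$, and a dimension count on the swept-out locus (the paper leaves all of this implicit, merely asserting the corollary follows from the two cited results). Your care with the final step — ruling out the case where the one-parameter family fails to sweep out $k$ dimensions — is a detail the paper glosses over, and you handle it correctly.
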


This non-existence result along with our degeneration and projection argument will provide the proof of Theorem \href{no(k-1)}{\ref{no(k-1)}}. For most other applications, we will use non-existence results for large families of suborbits on surfaces with $p_g\neq 0$.

\begin{lemma}\label{vanish2}
Let $X$ be a smooth projective surface and $\omega\in H^0(X,\Omega^2_X)$. If $Z\subset X^k$ is a Zariski closed subset of dimension $m$ foliated by $d$-dimensional suborbits, then $\omega_k^{\lceil(m-d+1)/2\rceil}$ restricts to zero on $Z$.
\end{lemma}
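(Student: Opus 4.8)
The plan is to reduce the statement to a radical computation on the tangent space of a suitable incidence variety, fed by Proposition \ref{vanish}. First I would observe that it suffices to treat the case where $d$ is the \emph{generic} dimension of $|z|\cap Z$ for $z\in Z$: if $d'\ge d$ then $\lceil(m-d'+1)/2\rceil\le\lceil(m-d+1)/2\rceil$, and $\omega_k^a|_Z=0$ forces $\omega_k^b|_Z=0$ for all $b\ge a$. So among the countably many irreducible components of $\Delta_{\mathrm{rat}}\cap Z\times Z$ dominating $Z$ (Ro\u{\i}tman, \cite{R1}) I would pick one, $\Delta'$, whose general fibre over $Z$ has maximal dimension $d$, and then, exactly as in the discussion preceding Proposition \ref{vanish}, shrink $Z$ to a Zariski open subset to obtain an $r$-parameter family $D\subset\mathrm{Chow}_d(X^k)$ of $d$-dimensional suborbits with $r=m-d$, its universal cycle $\pi\colon\mathcal Y\to D$ (so that $\dim\mathcal Y=m$ and the fibres of $\pi$ are the suborbits), and a generically finite dominant morphism $g\colon\mathcal Y\to Z\subset X^k$. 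The goal then becomes to show $(g^*\omega_k)^N=g^*(\omega_k^N)=0$, where $N=\lceil(m-d+1)/2\rceil$.

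The heart of the argument is to prove that, at a general point $y\in\mathcal Y$, the tangent space $L:=T_y\big(\pi^{-1}(\pi(y))\big)$ to the suborbit through $y$, which has dimension $d$, lies in the radical of the alternating form $(g^*\omega_k)_y$ on $T_y\mathcal Y$. Here I would use Proposition \ref{vanish} in its one-parameter form: for any irreducible curve $\gamma\subset D$ (a one-parameter family of suborbits), since $q=2>1=\dim\gamma$, the form $g^*\omega_k$ restricts to zero on $\pi^{-1}(\gamma)$. A Bertini argument---cutting a projective compactification of $D$ by general linear subspaces of codimension $r-1$ through a general $t=\pi(y)\in D$---produces such curves $\gamma\ni t$ whose tangent lines $T_t\gamma$ fill up a dense subset of $\mathbb{P}(T_tD)$. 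Combined with $\pi$ being submersive at $y$ (generic smoothness), the subspaces $T_y\big(\pi^{-1}(\gamma)\big)=d\pi_y^{-1}(T_t\gamma)$ then range over all $(d+1)$-dimensional subspaces of $T_y\mathcal Y$ containing $L$. Since $g^*\omega_k$ vanishes on each $\pi^{-1}(\gamma)$, any $v\in T_y\mathcal Y$ and any $\ell\in L$ lie in a common such subspace, so $(g^*\omega_k)_y(v,\ell)=0$; hence $L\subseteq\mathrm{rad}\,(g^*\omega_k)_y$.

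From this the conclusion is immediate: $(g^*\omega_k)_y$ descends to an alternating form on the $(m-d)$-dimensional space $T_y\mathcal Y/L$, so its rank is at most $2\lfloor(m-d)/2\rfloor$, and therefore $(g^*\omega_k)_y^{\,N}=0$ for $N=\lfloor(m-d)/2\rfloor+1=\lceil(m-d+1)/2\rceil$. As this holds at a general, hence dense, set of points of the variety $\mathcal Y$, the holomorphic form $(g^*\omega_k)^N=g^*(\omega_k^N)$ vanishes identically on $\mathcal Y$; since $g$ is dominant onto $Z$ it follows that $\omega_k^N|_Z=0$.

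I expect the main obstacle to be not this tangent-space computation but the bookkeeping required to produce the honest family $D$ of suborbits with $r=\dim D=m-d$ and $g$ generically finite: one must navigate Ro\u{\i}tman's countable-union description of rational equivalence to select the component $\Delta'$ of maximal relative dimension and to verify that the induced rational map $Z\dashrightarrow\mathrm{Chow}_d(X^k)$ has $d$-dimensional fibres. This is routine given \cite{R1} and the arguments of \cite{AP}, and is essentially already carried out in the discussion preceding Proposition \ref{vanish}. The conceptual point I want to highlight is that vanishing of $g^*\omega_k$ on the one-parameter subfamilies $\pi^{-1}(\gamma)$ alone already forces every leaf direction into the radical, which is what makes the power $\lceil(m-d+1)/2\rceil$---rather than the naive bound coming from isotropy of a single leaf---come out.
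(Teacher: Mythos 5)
Your proof is correct and follows essentially the same route as the paper's: both hinge on applying Proposition \ref{vanish} to one-parameter families of suborbits through a general point (the $(d+1)$-folds swept out by the suborbits over a curve) to place the leaf directions in the radical of $\omega_k$ restricted to $Z$, and then conclude by linear algebra. The remaining differences are presentational: the paper works directly on $T_{Z,z}$ and expands $\omega_k^{\lceil(m-d+1)/2\rceil}$ on an $(m-d+1)$-dimensional subspace into products of terms $\omega_k(v_i,v_{\sigma(i)})$, each containing a vanishing factor, whereas you pass to the universal family $\mathcal{Y}/D$ and bound the rank of the form descended to $T_y\mathcal{Y}/L$ -- an equivalent computation yielding the same exponent.
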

\begin{proof}
The set of points $z\in Z$ such that $z\in Z_{\text{sm}} \cap(|z|\cap Z)_{\text{sm}}$ is clearly Zariski dense. Thus, it suffices to show that $\omega_k^{\lceil(m-d+1)/2\rceil}$ restricts to zero on $T_{Z,z}$ for such a $z$. Suppose that $m-d$ is odd (the even case is treated in a similar way). Any $(m-d+1)$-dimensional subspace of $T_{Z,z}$ meets the tangent space to $T_{|z|\cap Z,z}$ and so admits a basis $v_1,\ldots, v_{m-d+1},$ with $v_1\in T_{|z|\cap Z,z}$. The number $\omega_k^{(m-d+1)/2}(v_1,\ldots, v_{m-d+1})$
consists of a sum of terms of the form 
$$\pm \prod_{i\in I}\omega_k(v_i,v_{\sigma(i)}),$$
where $I\subset \{1,\ldots, m-d+1\}$ is a subset of cardinality $(m-d+1)/2$ and
$$\sigma: I\to \{1,\ldots, m-d+1\}\setminus I$$
is a bijection. But $\omega_k(v_1,v_j)=0$ for any $j$ by Proposition \href{vanish}{\ref{vanish}}. Indeed, the space spanned by $v_1$ and $v_j$ is contained in the tangent space to a $(d+1)$-fold foliated by at least $d$-dimensional suborbits. It follows that $\omega_k^{(m-d+1)/2}(v_1,\ldots, v_{m-d+1})=0$ and so ${\omega_k^{(m-d+1)/2}}$ restricts to zero on $Z$.
\end{proof}

\begin{lemma}\label{surfbound}
Let $X$ be a Calabi-Yau surface and $\omega\in H^0(X,\Omega^2)$ be non-zero. If $\omega_k^l$ restricts to zero on $Z\subset X^k$, then $\dim Z< k+l$.
\end{lemma}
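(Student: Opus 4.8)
The plan is to argue by contradiction: suppose $\dim Z \geq k+l$ and produce a nonzero value of $\omega_k^l$ on some tangent space $T_{Z,z}$. The key structural input is that $X$ is a surface, so $\omega$ is a $2$-form of maximal degree, and $\omega_k = \sum_{i=1}^k \mathrm{pr}_i^*\omega$ is a symplectic-type form on $X^k$ whose radical we need to control. Concretely, at a smooth point $z = (x_1,\ldots,x_k)\in Z$, the tangent space $T_{X^k,z}$ decomposes as $\bigoplus_{i=1}^k T_{X,x_i}$, and $\omega_k$ is the orthogonal direct sum of the $\omega_{x_i}$ on the $2$-dimensional summands $T_{X,x_i}$. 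Since each $\omega_{x_i}$ is a nondegenerate alternating form on a $2$-dimensional space (here we use that $X$ is a surface and $\omega$ is nowhere vanishing — the Calabi–Yau hypothesis), the form $\omega_k$ is a nondegenerate alternating form on the $2k$-dimensional space $T_{X^k,z}$, i.e. it is a symplectic form.

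First I would record the elementary linear-algebra fact that drives the argument: if $(V,\eta)$ is a symplectic vector space of dimension $2k$ and $W\subseteq V$ is a subspace with $\dim W \geq k+l$, then $\eta^l|_W \neq 0$. Indeed, $\eta^k$ is a volume form on $V$, hence nonzero; and for a symplectic form the restriction $\eta^l|_W$ vanishes if and only if $W$ contains its own symplectic perpendicular in the appropriate sense — more precisely, $\eta^l|_W = 0$ forces $\dim W \leq k + l - 1$, because one can choose a Lagrangian-type flag adapted to $W$ and count. (The cleanest route: if $\eta^l|_W=0$ then every isotropic-free "symplectic rank" consideration shows the largest subspace of $W$ on which $\eta$ is nondegenerate has dimension at most $2(l-1)$, while $W$ itself differs from such a subspace by at most a $k$-dimensional isotropic piece; adding these gives $\dim W \le k + l - 1$, contradiction. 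Alternatively, invoke the standard fact that on a symplectic space the power $\eta^l$ restricted to a subspace of dimension $\geq k+l$ is automatically nonzero, which is dual to the statement that isotropic subspaces have dimension $\leq k$.)

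With this in hand the lemma is immediate: pick a smooth point $z\in Z$ with $\dim T_{Z,z} = \dim Z \geq k+l$, apply the linear-algebra fact with $V = T_{X^k,z}$, $\eta = \omega_k|_z$, and $W = T_{Z,z}$ to conclude $\omega_k^l|_{T_{Z,z}} \neq 0$, contradicting the hypothesis that $\omega_k^l$ restricts to zero on $Z$. Hence $\dim Z < k+l$. I expect the only real content to be the symplectic linear-algebra statement in the previous paragraph — everything else (the splitting of $T_{X^k}$, nondegeneracy of $\omega_k$ from $\omega$ being a nowhere-zero top-form on a surface, passing to a smooth point) is routine. The main obstacle is therefore purely formal: pinning down the precise bound "$\eta^l|_W = 0 \Rightarrow \dim W \leq k+l-1$" on a symplectic space, which is a clean exercise using a symplectic basis adapted to $W\cap W^{\perp_\eta}$, but one should state it carefully so the case analysis (parity of $\dim W - \dim(W\cap W^\perp)$) is handled. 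Note this is exactly the converse direction to what is used in Lemma \ref{vanish2}, so the two results together pin down when powers of $\omega_k$ vanish on foliated subvarieties.
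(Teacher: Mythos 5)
Your overall strategy is sound and is, at bottom, the same as the paper's: both arguments reduce to symplectic linear algebra on $T_{X^k,z}$ at a smooth point $z\in Z$, using that $\omega$ is nowhere vanishing on the Calabi--Yau surface $X$, so that $\omega_k$ is nondegenerate on $T_{X^k,z}$ (equivalently, $\omega_k^k$ is a holomorphic volume form on $X^k$). The paper implements the linear algebra by contracting $\omega_k^k=\omega_k^l\wedge\omega_k^{k-l}$ against a basis $v_1,\ldots,v_m$ of $T_{Z,z}$: the result is a nonzero element of $\bigwedge^{2k-m}T^*_{X^k,z}$ whose expansion involves full contractions $\omega_k^l(v_{i_1},\ldots,v_{i_{2l}})$ once $m\ge k+l$, forcing $\omega_k^l|_{T_{Z,z}}\neq 0$. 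You instead invoke the rank statement ``$\eta^l|_W=0\Rightarrow\dim W\le k+l-1$'' for a symplectic $\eta$ on a $2k$-dimensional space. That statement is true and does finish the proof, so the route is legitimate.

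However, the justification you sketch for it is wrong. Decomposing $W=W_0\oplus(W\cap W^{\perp_\eta})$ with $\eta|_{W_0}$ nondegenerate, your count bounds $\dim W_0\le 2(l-1)$ and bounds the radical piece by $k$ because it is isotropic; this yields only $\dim W\le k+2l-2$, which is strictly weaker than $k+l-1$ for every $l\ge 2$ and does not contradict $\dim W\ge k+l$. The correct count replaces the isotropy bound by $\dim(W\cap W^{\perp_\eta})\le\dim W^{\perp_\eta}=2k-\dim W$, so that the rank of $\eta|_W$, namely $\dim W-\dim(W\cap W^{\perp_\eta})$, is at least $2\dim W-2k\ge 2l$ when $\dim W\ge k+l$, whence $\eta^l|_W\neq 0$. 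With that substitution your proof closes; as written, the one step you yourself identify as ``the only real content'' is exactly the step that fails.
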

\begin{proof}
Pick $z\in Z_{\text{sm}}$ and let $v_1,\ldots, v_m$ be a basis of $T_{Z,z}$. Since $\omega_k^k$ is a holomorphic volume form on $X^k$, we have $\omega^k_k(v_{1},\ldots, v_{m})\neq 0\in \bigwedge^{2k-m}T_{X^k,z}^*$. If $m\geq k+l$, then $\omega_k^k(v_{1},\ldots, v_{m})$ is a sum of terms of the form
$$\pm\omega_k^l(v_{i_1},\ldots, v_{i_{2l}})\cdot \omega_k^{k-l}(v_{i_{2l+1}},\ldots, v_{i_m}),$$
where $\{i_1,\ldots, i_{2l+m}\}=\{1,\ldots, m\}$.
It follows from the non-vanishing of $\omega^k_k(v_{1},\ldots, v_{m})$ that $\omega_k^l$ does not restrict to zero on $T_{Z,z}$.
\end{proof}

Given an abelian variety $A$, we denote by $A^r_{M}$ (or $A_M$ when $r=1$) the image of $A^r$ under the following embedding:
\begin{align*}
i_M: A^r&\xrightarrow{\hspace*{2.5cm}} A^k\\
(a_1,\ldots, &a_r)\mapsto \Big(\sum_{j=1}^r m_{1j}a_j,\ldots, \sum_{j=1}^r m_{kj}a_j\Big),\end{align*}
where $M=(m_{ij})\in M_{k\times r}(\mathbb{Z})$ has rank $r$. Note that if $A$ is simple, all abelian subvarieties of $A^k$ are of the form $A^r_M$ for some matrix $M$ of rank $r$. 
 Given a $\mathbb{C}$-vector space $V$, we will use the same notation $V_M^r:=i_M(V)\subset V^k$, for  $M\in M_{k\times r}(\C)$.
\begin{lemma}\label{absub}
Let $A$ be an abelian variety and $\omega\neq 0\in H^0(A,\Omega^2)$. Then
$$(i_M^*\omega_k)^{ r}\neq 0  \in H^0(A_M^r,\Omega^{2r}).$$
In particular, if $\dim A=2$, the form $\omega_k$ restricts to a holomorphic symplectic form on $A_M^r$.
\end{lemma}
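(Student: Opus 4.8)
The statement is a linear-algebra fact about the pullback of $\omega_k = \sum_{i=1}^k \pr_i^*\omega$ along the linear embedding $i_M\colon A^r \to A^k$, so I would work entirely on the level of cotangent spaces at the origin. Write $W = H^0(A,\Omega^1)^* = T_{A,0}$, so that $\omega \in \wedge^2 W^*$ is a nondegenerate alternating form on $W$ (nondegenerate because $\omega$ is a translation-invariant $2$-form; if $A$ has dimension $2$ then $\omega$ is automatically symplectic, and in general a nonzero invariant $2$-form need not be symplectic — but see below, I only need a weaker statement). The differential of $i_M$ at the origin is the map $W^r \to W^k$, $(w_1,\dots,w_r)\mapsto (\sum_j m_{1j}w_j,\dots,\sum_j m_{kj}w_j)$, and pulling back $\omega_k$ gives the alternating $2$-form on $W^r$
\[
i_M^*\omega_k\big((w_1,\dots,w_r),(w_1',\dots,w_r')\big) = \sum_{i=1}^k \omega\Big(\sum_j m_{ij}w_j,\ \sum_j m_{ij}w_j'\Big) = \sum_{j,j'} \Big(\sum_i m_{ij}m_{ij'}\Big)\,\omega(w_j,w_j').
\]
In other words, if $N = M^{\mathsf T}M \in M_{r\times r}(\C)$ (a symmetric matrix), then $i_M^*\omega_k = \sum_{j,j'} N_{jj'}\,\pr_j^*\omega \wedge \text{(pairing)}$; more precisely it is the form $\omega_N$ on $W^r$ obtained by ``twisting'' the diagonal sum $\omega\oplus\cdots\oplus\omega$ by the Gram matrix $N$. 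Since $M$ has rank $r$, $N = M^{\mathsf T}M$ is \emph{invertible} (this is the standard fact that $\mathrm{rank}(M^{\mathsf T}M)=\mathrm{rank}(M)$ over a field where $M^{\mathsf T}M$ can still be singular — wait, that fails over $\C$; I must be careful here, see the obstacle paragraph).

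Granting that $N$ is invertible, I would diagonalize: choose $P \in GL_r(\C)$ with $P^{\mathsf T}NP = I_r$ (possible since $N$ is a nondegenerate symmetric bilinear form over $\C$, which has a single equivalence class), and change coordinates on $W^r$ by $\mathrm{id}_W \otimes P$. Under this change the form $\omega_N$ becomes $\omega \oplus \omega \oplus \cdots \oplus \omega$ ($r$ copies), i.e.\ the form $\omega_r$ on $W^r$ coming from the diagonal. Therefore $(i_M^*\omega_k)^{\wedge r} = (\omega_r)^{\wedge r}$ up to the nonzero scalar $\det(P)^{-2}$, and $(\omega_r)^{\wedge r} = r!\, \pr_1^*\omega \wedge \cdots \wedge \pr_r^*\omega \neq 0$ because $\omega \neq 0$ (each factor $\pr_i^*\omega$ is a nonzero $2$-form in the $i$-th block of variables, and these blocks are independent). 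This gives $(i_M^*\omega_k)^r \neq 0 \in H^0(A_M^r,\Omega^{2r})$, which extends from the origin to all of $A_M^r$ by translation-invariance. The final sentence (the $\dim A = 2$ case) follows because then $\dim A_M^r = 2r$, so a nowhere-zero $2r$-form is a holomorphic volume form and $i_M^*\omega_k$ is nondegenerate, i.e.\ symplectic.

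The main obstacle is exactly the claim that $N = M^{\mathsf T}M$ is invertible: over $\C$ this is \emph{false} in general (e.g.\ $M = \binom{1}{i}$ has rank $1$ but $M^{\mathsf T}M = 1 + i^2 = 0$), so the naive argument above breaks. The fix is to not use the standard inner product but to work directly with the form $\omega$ itself. The correct statement is: the form $i_M^*\omega_k$ on $W^r$ is nondegenerate as soon as the $r$ vectors of coordinate functionals picked out by the columns of $M$ are linearly independent \emph{together with the symplectic pairing $\omega$ on each factor}. Concretely, $i_M^*\omega_k$ is the restriction of $\omega_k$ to the linear subspace $W^r_M \subset W^k$, which is an \emph{$\omega_k$-symplectic-nondegenerate} subspace: one checks that $W^r_M$ meets its $\omega_k$-orthogonal trivially, using that $\omega$ is nondegenerate on each $W$-factor and that the columns of $M$ are $\C$-linearly independent (so the projections $W^r_M \to W$ onto the factors do not all share a common kernel direction). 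A clean way to see this without coordinates: $W^r_M$ is the image of an injective map whose composite with enough coordinate projections is injective, and a subspace of a symplectic vector space on which the restricted form is nondegenerate has the property that the top exterior power of the restricted form is nonzero — this is a standard fact equivalent to ``symplectic subspace.'' So the real content of the proof is verifying that $W^r_M$ is a symplectic subspace of $(W^k,\omega_k)$; once that is done, nondegeneracy of the restricted form immediately yields $(i_M^*\omega_k)^r \neq 0$, and I would spend the bulk of the write-up on that verification (via the explicit Gram-type computation with $\omega$ in place of the standard form, showing the relevant $2r\times 2r$ alternating matrix has nonzero Pfaffian).
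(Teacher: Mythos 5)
Your first paragraph is essentially the paper's proof: compute $i_M^*\omega_k$ at the origin and identify it as the diagonal form $\omega\oplus\cdots\oplus\omega$ twisted by the Gram matrix $N=M^{\mathsf T}M$, whence $(i_M^*\omega_k)^r$ is $\det N$ times a visibly nonzero form (the paper does this for $\omega=dz\wedge dw$ and gets $(i_M^*\omega_k)^r=r!\det\mathbf{G}\,dz_1\wedge dw_1\wedge\cdots\wedge dz_r\wedge dw_r$). The problem is that you then talk yourself out of this correct argument. The ``main obstacle'' you identify is not there: in the paper's definition of $A^r_M$ the matrix $M$ lies in $M_{k\times r}(\mathbb{Z})$ --- it must, since $i_M$ is a homomorphism of abelian varieties --- so $M^{\mathsf T}M$ is a real symmetric positive semidefinite matrix, and full column rank of $M$ over $\mathbb{R}$ forces it to be positive definite. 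Your counterexample $M=\binom{1}{i}$ is not an admissible $M$. So the ``naive'' argument goes through verbatim and is exactly what the paper does.

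The replacement argument you propose instead has a genuine gap. You reduce the claim to ``$W^r_M$ is a symplectic subspace of $(W^k,\omega_k)$,'' i.e.\ to nondegeneracy of $i_M^*\omega_k$. But when $\dim A=g>2$, a nonzero $\omega\in H^0(A,\Omega^2)$ need not be nondegenerate on $W=T_{A,0}$ (e.g.\ $dz_1\wedge dz_2$ on a threefold), and then $i_M^*\omega_k$ has rank at most $r\cdot\mathrm{rank}(\omega)<\dim W^r_M$, so it is \emph{not} nondegenerate and $W^r_M$ is not a symplectic subspace. The lemma only asserts that the $r$-th power is nonzero, which for $g>2$ is far below the top power $(gr/2)$-th power that nondegeneracy would control; so the symplectic-subspace route proves the wrong statement and fails in the degenerate case. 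The correct handling of a general $\omega$ is either the paper's reduction to a decomposable summand $dz\wedge dw$, or the observation that after putting $\omega$ in canonical form the cross-terms in $(i_M^*\omega_k)^r$ land in different graded pieces of $\bigwedge^{2r}(W^r)^*$ and cannot cancel the Gram-determinant term. (Minor further point: your identity $(\omega_r)^{\wedge r}=r!\,\pr_1^*\omega\wedge\cdots\wedge\pr_r^*\omega$ omits terms with repeated factors $\pr_i^*\omega^{\wedge 2}$; the conclusion survives by the same grading argument, but the equality as stated is false when $\omega^{\wedge 2}\neq 0$.)
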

\begin{proof}
We only treat the case $\omega=dz\wedge dw$, where $z$ and $w$ are two coordinates on $A$. The general case is easily obtained from this special case. Let $z_{i},w_i$ be the corresponding coordinates on the $i^{\text{th}}$ factor of $A^r$. We have 
$$i_M^*\omega_k=\sum_{i=1}^k\left(\sum_{j=1}^rm_{ij}dz_{j}\wedge \sum_{j'=1}^rm_{ij'}dw_{j'}\right).$$
A computation gives
$$(i_M^*\omega_k)^{r}=r!\det \mathbf{G}\; dz_{1}\wedge dw_1\wedge\ldots \wedge dz_{r}\wedge dw_{r},$$
where $\mathbf{G}=(\langle M_i,M_j\rangle)_{1\leq i,j\leq r}$ is the Gram matrix of the columns $M_i$ of the matrix $M$. Since $M$ has maximal rank its Gram matrix has positive determinant. It follows that $(i_M^*\omega_k)^{ r}\neq 0$.
\end{proof}

Lemmas \href{vanish2}{\ref{vanish2}} and \ref{absub} imply the following corollary which will play a crucial technical role in our argument:

\begin{corollary}\label{notfoliated2}
Given an abelian surface $A$, a subvariety of the form $A^r_M\subset A^k$ cannot be foliated by positive dimensional suborbits.
\end{corollary}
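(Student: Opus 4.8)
The plan is to pit the vanishing statement of Lemma \ref{vanish2} against the non-vanishing statement of Lemma \ref{absub}. Since $A$ is an abelian surface, $H^0(A,\Omega^2)$ is one-dimensional; fix a generator $\omega\neq 0$. Suppose, for contradiction, that $Z:=A^r_M\subset A^k$ were foliated by $d$-dimensional suborbits for some $d\geq 1$. Since $\dim(|z|\cap Z)\geq d\geq 1$ for every $z\in Z$, the subvariety $Z$ is in particular foliated by $1$-dimensional suborbits, so we may take $d=1$. Because $i_M$ has finite kernel, $\dim Z=2r$; set $m:=\dim Z=2r$.

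I would then apply Lemma \ref{vanish2} with $X=A$ (a smooth projective surface), the chosen form $\omega$, and this $Z$: with $m=2r$ and $d=1$ the exponent $\lceil (m-d+1)/2\rceil$ equals $\lceil 2r/2\rceil=r$, so the lemma gives that $\omega_k^r$ restricts to zero on $A^r_M$. On the other hand, Lemma \ref{absub} (in the case $\dim A=2$) asserts that $(i_M^*\omega_k)^r\neq 0$ in $H^0(A^r_M,\Omega^{2r})$; equivalently, since $A^r_M$ has dimension $2r$, the form $\omega_k^r$ restricts to a nowhere-vanishing volume form on $A^r_M$. These two conclusions are contradictory, so no positive-dimensional foliation by suborbits can exist.

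I do not expect any real obstacle here: all the content is carried by Lemmas \ref{vanish2} and \ref{absub}, and once one reduces to $d=1$ even the bookkeeping with the ceiling function disappears. The only point worth a remark is that, had one kept general $d\geq 1$, one would note $\lceil (m-d+1)/2\rceil\leq r$ and then write $\omega_k^r=\omega_k^{r-e}\wedge\omega_k^e$, with $e$ this exponent, to still deduce $\omega_k^r|_{A^r_M}=0$; reducing to $d=1$ sidesteps this.
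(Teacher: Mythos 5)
Your proof is correct and is exactly the argument the paper intends: the corollary is stated as an immediate consequence of Lemmas \ref{vanish2} and \ref{absub}, obtained by playing the vanishing $\omega_k^{r}|_{A^r_M}=0$ (from Lemma \ref{vanish2} with $m=2r$, $d=1$) against the non-vanishing $(i_M^*\omega_k)^r\neq 0$ (from Lemma \ref{absub}). The reduction to $d=1$ is harmless and your bookkeeping of the exponent is right.
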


The main result on rational equivalence of zero-cycles that we will use in the proof of Theorem \ref{2k-2} is a version of Lemma \ref{surfbound} for normalized suborbits of abelian surfaces. Using the same argument as in Lemma \href{surfbound}{\ref{surfbound}}, we get the following:

\begin{lemma}\label{surfbound2}
Let $A$ be an abelian surface and $Z\subset A^{k,\,0}$ be such that $\omega_k^l$ restricts to zero on $Z$. Then, 
$$\dim Z< k+l-1.$$
\end{lemma}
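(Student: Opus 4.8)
The plan is to deduce the statement from the counting argument already used in the proof of Lemma \ref{surfbound}, the single new ingredient being that the kernel $A^{k,\,0}$ of the summation map $A^k\to A$ is an abelian subvariety of $A^k$ of the form $A^{k-1}_M$, so that Lemma \ref{absub} equips it with a holomorphic symplectic form.

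Concretely, $A^{k,\,0}=\ker(m\colon A^k\to A)$ is the image $i_M(A^{k-1})$ of the matrix $M\in M_{k\times(k-1)}(\mathbb{Z})$ whose top $(k-1)\times(k-1)$ block is the identity and whose bottom row is $(-1,\ldots,-1)$: indeed $i_M(a_1,\ldots,a_{k-1})=(a_1,\ldots,a_{k-1},-a_1-\cdots-a_{k-1})$, so $i_M$ is an isomorphism of $A^{k-1}$ onto $A^{k,\,0}$, which is thus an abelian variety of dimension $2(k-1)$, and $M$ has rank $k-1$ (its Gram matrix is $I_{k-1}$ plus the all-ones matrix, of determinant $k>0$). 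Since $\dim A=2$, Lemma \ref{absub} applies with $r=k-1$: writing $\omega':=i_M^*\omega_k\in H^0(A^{k-1},\Omega^2)$, it gives $(\omega')^{k-1}\neq 0$ in $H^0(A^{k-1},\Omega^{2(k-1)})$, and since the canonical bundle of $A^{k-1}$ is trivial this section is nowhere vanishing, i.e.\ $\omega'$ is a holomorphic symplectic form. Because $Z\subset A^{k,\,0}$ and $i_M$ is an isomorphism onto its image, we may pull everything back along $i_M$ and henceforth assume $Z\subset A^{k-1}$, with the same dimension, and with the hypothesis reading $(\omega')^l|_Z=0$.

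Now I would repeat the argument of Lemma \ref{surfbound}, with $A^{k-1}$ (of dimension $2(k-1)$) playing the role of $X^k$ and the volume form $(\omega')^{k-1}$ that of $\omega_k^k$. Choose a smooth point $z\in Z$ and a basis $v_1,\ldots,v_m$ of $T_{Z,z}$, where $m=\dim Z$, and suppose for contradiction that $m\geq k+l-1=(k-1)+l$. On one hand $(\omega')^{k-1}(v_1,\ldots,v_m)\neq 0$, since $(\omega')^{k-1}$ is a volume form and the $v_i$ are linearly independent. On the other hand, were $(\omega')^l$ to restrict to zero on $T_{Z,z}$, then expanding $(\omega')^{k-1}=(\omega')^l\wedge(\omega')^{k-1-l}$ would write $(\omega')^{k-1}(v_1,\ldots,v_m)$ as a sum of terms $\pm(\omega')^l(v_{i_1},\ldots,v_{i_{2l}})\cdot(\omega')^{k-1-l}(v_{i_{2l+1}},\ldots,v_{i_m})$, all of which vanish because $(\omega')^l(v_{i_1},\ldots,v_{i_{2l}})=0$ — a contradiction. (Equivalently: $\omega'$ is symplectic on $T_{A^{k-1},z}$, so $\operatorname{rank}(\omega'|_{T_{Z,z}})\geq 2m-2(k-1)\geq 2l$, forcing $(\omega'|_{T_{Z,z}})^l\neq0$.) Hence $(\omega')^l$ does not restrict to zero on $T_{Z,z}$, contradicting $(\omega')^l|_Z=0$; therefore $\dim Z<k+l-1$.

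The only step that is not a direct transcription of Lemma \ref{surfbound} is the first one, namely recognizing $A^{k,\,0}$ as an $A^r_M$ with $M$ of maximal rank so that Lemma \ref{absub} can be invoked; this is elementary. Once it is in place, the improvement of the bound from $k+l$ to $k+l-1$ is accounted for exactly by the decrease of the dimension of the ambient abelian variety from $2k$ to $2(k-1)$, and I do not anticipate any further difficulty.
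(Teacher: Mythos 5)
Your proof is correct and is exactly the argument the paper intends: the paper's own ``proof'' consists of the single remark that the lemma follows by the same argument as Lemma \ref{surfbound}, and your identification of $A^{k,\,0}$ with $A^{k-1}_M$ --- so that Lemma \ref{absub} makes $\omega_k$ restrict to a holomorphic symplectic form on an abelian variety of dimension $2(k-1)$ rather than $2k$ --- is precisely the one new ingredient that makes the transcription work and accounts for the improvement from $k+l$ to $k+l-1$. No gaps.
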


\begin{corollary}\label{normabsurfbound}
Let $A$ be an abelian surface and $Z\subset A^{k,\,0}$ be foliated by $d$-dimensional suborbits. Then, 
$$\dim Z\leq 2(k-1)-d.$$\label{absurfbound}
\end{corollary}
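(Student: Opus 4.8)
The plan is to run $Z$ through Lemma~\ref{vanish2} to force a power of $\omega_k$ to vanish on it, and then invoke Lemma~\ref{surfbound2} to convert that vanishing back into a dimension estimate; the only genuine work is a short, parity-sensitive inequality at the end. First I would fix a nonzero $\omega\in H^0(A,\Omega^2_A)$, which exists because $\Omega^2_A\cong\OO_A$, and record that an abelian surface is in particular a smooth projective surface, so Lemma~\ref{vanish2} is applicable.

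Concretely: set $m=\dim Z$. Since $Z\subset A^k$ is Zariski closed of dimension $m$ and foliated by $d$-dimensional suborbits, Lemma~\ref{vanish2} gives that $\omega_k^{\,l}$ restricts to zero on $Z$, where $l:=\lceil (m-d+1)/2\rceil$. Because $Z$ is contained in the kernel $A^{k,\,0}$ of the summation map, I can then feed this vanishing into Lemma~\ref{surfbound2} to obtain
$$m< k+l-1=k-1+\left\lceil\frac{m-d+1}{2}\right\rceil.$$

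The remaining step is to solve this inequality for $m$, and this is where the one subtle point appears. If $m-d$ is odd, then $\lceil (m-d+1)/2\rceil=(m-d+1)/2$ and the inequality becomes $2m<2k-1+m-d$, i.e. $m\leq 2k-2-d=2(k-1)-d$, as desired. If $m-d$ is even, then $\lceil (m-d+1)/2\rceil=(m-d)/2+1$ and the inequality only gives $m<2k-d$, hence a priori just $m\leq 2k-1-d$; here one must observe that $m\equiv d\pmod 2$ while $2k-1-d\equiv d+1\pmod 2$, so the value $m=2k-1-d$ is excluded and again $m\leq 2k-2-d=2(k-1)-d$. Thus in either case $\dim Z\leq 2(k-1)-d$.

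The hard part is essentially this last even case, where the crude inequality is off by one and one needs the congruence $m\equiv d\pmod 2$ to recover the sharp bound stated in the corollary; everything else is a direct application of the two preceding lemmas, and there is nothing further to check once one has noted that $A$ is a smooth projective surface with $\Omega^2_A\cong\OO_A$ and that $Z\subset A^{k,\,0}$. (If one is worried about $Z$ being reducible, both Lemma~\ref{vanish2} and Lemma~\ref{surfbound2} are already stated for arbitrary Zariski closed subsets with $\dim$ meaning the maximum over components, so no reduction to the irreducible case is needed.)
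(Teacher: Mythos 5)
Your proof is correct and takes essentially the same route as the paper: the paper's own argument applies Lemma~\ref{vanish2} and then Lemma~\ref{surfbound2} and concludes ``for parity reasons,'' which is exactly the two-case analysis you have spelled out. Nothing further is needed.
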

\begin{proof}
By Lemma \href{vanish2}{\ref{vanish2}}\; $\omega^{\lceil (\dim Z-d+1)/2\rceil}$ restricts to zero on $Z$. Then by Lemma \href{surfbound2}{\ref{surfbound2}}\; 
$$\dim Z<k+\lceil(\dim Z-d+1)/2\rceil-1.$$
This gives the stated bound for parity reasons.
\end{proof}

\vspace{0.5em}

\section{Specialization and projection}\label{specialization}
\vspace{0.5em}
In this section we generalize Voisin's method from Section 2 of \cite{V} to powers of abelian varieties. The key difference is that our generalization requires technical assumptions which are automatically satisfied in Voisin's setting. Checking that these assumptions are also satisfied in the case at hand occupies the bulk of the next section.\\

\subsection{Setup and conditions $(*)$, $(**)$, and $(***)$}

Just as in \cite{P},\cite{AP}, and \cite{V}, we will consider specializations to loci of non-simple abelian varieties. Given $\mathcal{A}/S$, a locally complete family of abelian varieties of dimension $g$, and a positive integer $l< g$, we denote by $S_{\lambda}\subset S$ the loci along which 
$$\mathcal{A}_s\sim\mathcal{B}^\lambda_{s}\times \mathcal{E}^\lambda_{s},$$
where $\mathcal{B}^\lambda/S_\lambda$ and $\mathcal{E}^\lambda/S_\lambda$ are locally complete families of abelian varieties of dimension $l$ and $g-l$ respectively. The index $\lambda$ encodes the structure of the isogeny and we let $\Lambda_l$ (or $\Lambda_l(\mathcal{A})$ if there is a risk of confusion about the family $\mathcal{A}/S$) be the set of all such $\lambda$. Given a positive integer $l'< l$, we will also be concerned with loci $S_{\lambda,\mu}\subset S_{\lambda}$ along which 
$$\mathcal{B}_s^\lambda\sim \mathcal{D}_s^{\lambda,\mu}\times\mathcal{F}_s^{\lambda,\mu},$$
where $\mathcal{D}^{\lambda,\mu}/S_{\lambda,\mu}$ and $\mathcal{F}^{\lambda,\mu}/S_{\lambda,\mu}$ are locally complete families of abelian varieties of dimension $l'$ and $l-l'$ respectively. Again, the index $\mu$ encodes the structure of the isogeny and we let $\Lambda_{l'}^{\lambda}$ be the set of all such $\mu$.\\

For our applications, we will mostly consider the case $(l,l')=(g-1,2)$. Given a positive integer $k$ and upon passing to a generically finite cover of $S_\lambda$  and $S_{\lambda,\mu}$, we have projections
\begin{align*}
p_{\lambda}: &\mathcal{A}_{S_{\lambda}}^k\to ({\mathcal{B}^{\lambda}})^k,\\
p_{\mu}: &(\mathcal{B}^{\lambda}_{S_{\lambda,\mu}})^k\to (\mathcal{D}^{\lambda,\mu})^k,
\end{align*}
and we let
$$p_{\lambda,\mu}:=p_{\mu}\circ p_{\lambda}: \mathcal{A}_{S_{\lambda,\mu}}^k\to (\mathcal{D}^{\lambda,\mu})^k,  \qquad  \text{for }\mu\in \Lambda_{l'}^{\lambda}.$$
For $\lambda\in \Lambda_l$, $\mu\in \Lambda_{l'}^\lambda$, and abelian varieties $B,D,F$ in the families $\mathcal{B}^\lambda/S_\lambda$, $\mathcal{D}^{\lambda,\mu}/S_{\lambda,\mu},$ and $\mathcal{F}^{\lambda,\mu}/S_{\lambda,\mu}$, we can consider the loci
\begin{align*}S_{\lambda}(B)&=\{s\in S_{\lambda}: \mathcal{B}^{\lambda}_s\cong B\}\subset S_{\lambda},\\
S_{\lambda,\mu}(D,F)&=\{s\in S_{\lambda,\mu}: \mathcal{D}^{\lambda,\mu}_s\cong D,\mathcal{F}^{\lambda,\mu}_s\cong F\}\subset S_{\lambda,\mu},
\end{align*}
which have the same dimension as the moduli stack of abelian $(g-l)$-folds.\\

The main result of this section is Proposition \ref{prop1} which is a generalization of Proposition 2.4 from \cite{V}. It states the following: 
\begin{quote}\textit{Given a family of irreducible subvarieties $\mathcal{Z}\subset \mathcal{A}^k$ that satisfies certain technical assumptions, there exists a $\lambda\in \Lambda_l$ such that the variety $p_\lambda(\mathcal{Z}_s)\subset B^k$ varies with $s\in S_\lambda(B)$, for $B$ in the family $\mathcal{B}^\lambda$.}
\end{quote}
The technical assumptions mentioned above are non-degeneracy conditions. Given a subvariety $\mathcal{Z}\subset \mathcal{A}^k/S$, we consider the following subsets of $S$:\\
\begin{align*}R_{gf}&=\bigcup_{\lambda}\{s\in S_\lambda : p_{\lambda}|_{\mathcal{Z}_s}: \mathcal{Z}_s\to (\mathcal{B}^{\lambda}_s)^k\text{ is generically finite on its image}\},\\
R_{ab}&=\bigcup_{\lambda}\{s\in S_\lambda : p_{\lambda}(\mathcal{Z}_s)\text{ is not an abelian subvariety of }(\mathcal{B}^{\lambda}_s)^k\},\\
R_{st}&=\bigcup_{\lambda}\{s\in S_\lambda : p_{\lambda}(\mathcal{Z}_s) \text{ is not stabilized by a positive-dimensional abelian subvariety of } (\mathcal{B}^{\lambda}_s)^k\}.
\end{align*}

\noindent We define three non-degeneracy conditions on $\mathcal{Z}\subset \mathcal{A}^k$:
\begin{itemize}
\setlength\itemsep{0em}
\item[($*$)] $R_{gf}\subset S$ is dense,
\item[($**$)] $R_{ab}\cap R_{gf}\subset S$ is dense,
\item [($***$)] $R_{st}\cap R_{gf}\subset S$ is dense.
\end{itemize}
These sets and conditions depend on $\mathcal{Z}$ and $l$ and, while $\mathcal{Z}$ should usually be clear from the context, we will say $(*)$ holds for a specified value of $l$. Note that we have the following obvious implications:
$$(***)\implies (**)\implies (*).$$ Assuming condition $(**)$ is satisfied, a simple argument will allow us to restrict ourselves to the case where condition $(***)$ is satisfied. Moreover, we will show in Lemma \ref{*implies**} that if $\mathcal{Z}\subset \mathcal{A}^k$ is foliated by positive dimensional suborbits then $(*)\iff (**)$. It follows that $(*)$ is the only crucial assumption on $\mathcal{Z}$ for our applications.\\

Recall that, given varieties $X$ and $S$, an irreducible subvariety $\mathcal{Z}\subset X_S$  with generic fiber of dimension $d$ over $S$ gives rise to a morphism from (an open in) $S$ to the Chow variety $\text{Chow}_d(X)$. Here $\text{Chow}_d(X)$ parametrizes effective cycles of class $[\mathcal{Z}_s]$ on $X$. We say that the variety $\mathcal{Z}_s$ varies with $s\in S$ if the corresponding morphism from $S$ to $\text{Chow}_d(X)$ is (generically) finite. We remind the reader of the notational convention of Remark \href{convention}{\ref{convention}} which allows us to remove the words in parenthesis from the previous sentences. The precise formulation of our main proposition is:

\begin{proposition}\label{prop1}
Let $\mathcal{Z}\subset \mathcal{A}^k$ be a variety that dominates $S$, has irreducible fibers of dimension $d$ over $S$, and satisfies condition $(**)$. Then, there exists a $\lambda\in \Lambda_l$ such that the subvariety
$$p_\lambda(\mathcal{A}_{S_{\lambda}(B)})\subset \big(\mathcal{B}^\lambda_{S_\lambda(B)}\big)^k\cong B^k_{S_{\lambda}(B)}$$
gives rise to a finite morphism from $S_{\lambda}(B)$ to $\textup{Chow}_d(B^k)$ for all $B$ in the family $\mathcal{B}^\lambda/S_\lambda$.
\end{proposition}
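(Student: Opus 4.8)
The plan is to follow the strategy of Voisin's Proposition 2.4 from \cite{V}, adapting it to the setting of powers $\mathcal{A}^k$, where the extra non-degeneracy conditions $(*)$--$(***)$ must do the work that was automatic for $k=1$. First I would recall the basic mechanism: the family $\mathcal{Z}\subset \mathcal{A}^k$ has fibers of dimension $d$, so over (an open in) each locus $S_\lambda$ it induces a morphism $\psi_\lambda\colon S_\lambda \dashrightarrow \textup{Chow}_d\big((\mathcal{B}^\lambda)^k/S_\lambda\big)$ sending $s$ to the cycle $[p_\lambda(\mathcal{Z}_s)]$ (after checking, using $(*)$, that $p_\lambda|_{\mathcal{Z}_s}$ is generically finite on its image on a dense set, so the image cycle has the right dimension $d$). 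Composing with the structure map to $S_\lambda$ and remembering that the isomorphism class of $\mathcal{B}^\lambda_s$ is recorded by the image of $S_\lambda$ in the moduli stack, the assertion "$p_\lambda(\mathcal{Z}_s)$ varies with $s\in S_\lambda(B)$" is exactly the statement that $\psi_\lambda$ restricted to a fiber $S_\lambda(B)$ is finite onto its image. So I must produce a single $\lambda$ for which this holds for generic (hence, by the notational convention of Remark \ref{convention}, all) $B$.

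Next I would argue by contradiction: suppose that for every $\lambda\in\Lambda_l$ the morphism $\psi_\lambda$ fails to be finite on the generic fiber $S_\lambda(B)$, i.e.\ $p_\lambda(\mathcal{Z}_s)$ is \emph{constant} along positive-dimensional subloci of each $S_\lambda(B)$. The geometric meaning is that, infinitesimally, the image $p_\lambda(\mathcal{Z}_s)\subset B^k$ does not move when we vary $s$ within $S_\lambda(B)$. I would then translate this into a statement about the Gauss--Manin / Kodaira--Spencer deformation of $\mathcal{Z}$: deforming $\mathcal{A}_s$ in the family $\mathcal{A}/S$ within the locus $S_\lambda$ while keeping $\mathcal{B}^\lambda_s$ fixed amounts to deforming only the elliptic (resp.\ $(g-l)$-dimensional) factor $\mathcal{E}^\lambda_s$ and the gluing data; the hypothesis that $p_\lambda(\mathcal{Z}_s)$ is locally constant in $B^k$ forces the tangent space $T_{\mathcal{Z}_s, z}$, modulo the kernel of $dp_\lambda$, to be annihilated by these deformation directions. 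Running this over all $\lambda$ and using that the loci $S_\lambda$ (for varying $\lambda\in\Lambda_l$, i.e.\ all isogeny types realizing a splitting with an $l$-dimensional factor) sweep out enough of the deformation space of $A$, I would conclude that $\mathcal{Z}_s$ is forced to lie in a proper "rigid" sublocus — concretely, that $p_\lambda(\mathcal{Z}_s)$ must in fact be (a translate of) an abelian subvariety of $(\mathcal{B}^\lambda_s)^k$, or even be stabilized by a positive-dimensional abelian subvariety. This is where condition $(**)$ enters decisively: it guarantees that on a dense set of $s\in S_\lambda$ (for suitable $\lambda$) the image $p_\lambda(\mathcal{Z}_s)$ is \emph{not} an abelian subvariety, directly contradicting the conclusion we were forced into. (The reduction from $(**)$ to the cleaner situation of $(***)$, where the stabilizer is trivial, is the "simple argument" advertised in the text: quotient $(\mathcal{B}^\lambda)^k$ by the generic stabilizer to replace $\mathcal{Z}$ by its image, which then satisfies $(***)$ and has the same "varies with $s$" behavior.)

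More concretely, the heart of the argument will mirror Voisin's: one picks a very general point $b\in B$ and studies the fiber $S_\lambda(B,b)$ of points $s$ with $\mathcal{B}^\lambda_s\cong B$ and a marked point; the universal abelian variety over the relevant Shimura-type locus carries a nonzero Kodaira--Spencer class, and the failure of $p_\lambda(\mathcal{Z}_s)$ to vary means a certain contraction of this class with the conormal directions of $p_\lambda(\mathcal{Z}_s)$ vanishes identically. Iterating over all $\lambda$ — equivalently, letting the $l$-dimensional abelian subvariety $\mathcal{B}^\lambda_s\subset \mathcal{A}_s$ (up to isogeny) move through all its possibilities — shows the conormal bundle of $p_\lambda(\mathcal{Z}_s)$ in $(\mathcal{B}^\lambda_s)^k$ is orthogonal to a spanning set of translation-invariant directions, which forces $p_\lambda(\mathcal{Z}_s)$ to be translation-invariant, i.e.\ an abelian subvariety after translation. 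I would also need the elementary observation, used by Voisin, that the loci $S_\lambda(B)$ have dimension equal to that of the moduli of the complementary factor, so there is "room" for $\psi_\lambda$ to be finite, and the incidence bookkeeping of $R_{gf}$ ensures we only work where $\dim p_\lambda(\mathcal{Z}_s)=d$.

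The main obstacle I anticipate is precisely the step where one rules out that $p_\lambda(\mathcal{Z}_s)$ is locally constant in $B^k$ \emph{for all} $\lambda$ simultaneously: in Voisin's original setting ($k=1$, subvarieties of $A$ itself) one has the luxury that $p_\lambda$ restricted to the naturally-defined subset is generically finite onto its image for purely geometric reasons (Lemma 2.5 of \cite{V}), whereas here generic finiteness of $p_\lambda|_{\mathcal{Z}_s}$ is exactly the substance of hypothesis $(*)$ and must be fed in. Getting the deformation-theoretic computation to interact correctly with the condition "$p_\lambda(\mathcal{Z}_s)$ is not an abelian subvariety" — rather than merely "not a point" — requires care, and this is why the proof is organized around $(**)$ and the reduction to $(***)$ rather than $(*)$ alone (the equivalence $(*)\iff(**)$ under the foliated-by-suborbits hypothesis is deferred to Lemma \ref{*implies**} and not needed here). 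I expect the cleanest route is to first dispatch the reduction to $(***)$, then carry out the Kodaira--Spencer argument in the stabilizer-free case where the statement "locally constant in $B^k$ $\Rightarrow$ translation-invariant" becomes "locally constant $\Rightarrow$ a point," which is the contradiction with $R_{gf}$.
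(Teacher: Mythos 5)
Your overall framing (first reduce to $(***)$, then derive a contradiction from the assumption that $p_\lambda(\mathcal{Z}_s)$ fails to vary with $s\in S_\lambda(B)$) matches the paper's, but the mechanism you propose for the contradiction is not the one the paper uses, and as described it has a genuine gap. The paper's proof has two essential ingredients that are absent from your sketch. First, a way to compare the projections $p_\lambda$ \emph{across different} $\lambda$: the composition $q=\pi_{\mathcal{H}}\circ g$ of the Gauss map with the projection is defined over the whole Grassmannian bundle $G$, it factors through $p_\lambda$ along each $\sigma_\lambda(S_\lambda)$, and since a covering has only finitely many intermediate covers (Lemmas \ref{cov1} and \ref{cov2}) the density statement of Lemma \ref{dense} produces a single morphism $p:\mathcal{Z}_G\to\mathcal{Z}'_G$ over $G$ agreeing birationally with $p_\lambda$ along a dense union of the $\sigma_\lambda(S_\lambda)$. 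Second, a $\text{Pic}^0$ argument: because $\mathcal{A}_t$ is simple for very general $t\in G$, the connected kernel of $\widetilde{p}_*\circ\widetilde{j}^*$ is a fixed $(\mathcal{A}_t)^r_M$ with $r\leq k-1$ over all of $G$; if $p_\lambda(\mathcal{Z}_t)$ were constant along a curve in $S_\lambda(B)$, the fixed abelian variety $\text{Pic}^0\big(\widetilde{p_\lambda(\mathcal{Z}_t)}\big)$ would receive isogenies from the non-isotrivial family $\text{Pic}^0(\ker(p_\lambda)_t)/(\mathcal{A}_t)^r_M\cap\text{Pic}^0(\ker(p_\lambda)_t)$, which is impossible. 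In particular the final contradiction is with the non-isotriviality of the complementary family $\mathcal{E}^\lambda$, not with condition $(**)$; the role of $(***)$ is earlier and different, namely to make the Gauss map of $p_\lambda(\mathcal{Z}_s)$ generically finite (Griffiths--Harris) so that $q$ is generically finite and the covering-space machinery applies.

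The concrete gap in your route is the step ``locally constant in $B^k$ for all $\lambda$ $\Rightarrow$ $p_\lambda(\mathcal{Z}_s)$ is (stabilized by) an abelian subvariety.'' If $p_\lambda(\mathcal{Z}_s)$ equals a fixed $W\subset B^k$ for $s$ in a positive-dimensional sublocus of $S_\lambda(B)$, the deformation being performed moves only the elliptic factor $\mathcal{E}^\lambda_s$ and the gluing data, while the ambient $B^k$ containing $W$ does not move at all; there is no evident contraction of a Kodaira--Spencer class against the conormal bundle of $W$ that is forced to vanish, and no mechanism by which ``iterating over all $\lambda$'' combines constraints that live in the different targets $(\mathcal{B}^\lambda_s)^k$. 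So the translation-invariance of $p_\lambda(\mathcal{Z}_s)$ is asserted but not derivable from what you set up, and the contradiction with $(**)$ never materializes. Relatedly, your reduction to $(***)$ is too quick: identifying the stabilizer $\mathcal{A}^r_M$ at one point $s_0$ is easy, but propagating triviality of the stabilizer (and generic finiteness) of $p_\lambda(\mathcal{Z}_s/(\mathcal{A}_s)^r_M)$ to a dense set of $\lambda$ is exactly where the paper again needs the Gauss-map diagram spread over $G$ together with Lemma \ref{dense}.
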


We propose to use this proposition to prove Conjecture \href{Vweakconj}{\ref{Vweakconj}} as outlined in the introduction: Consider a locally complete family of abelian $(2k-1)$-folds $\mathcal{A}_1/S_1$ and suppose that the set 
$$\{s\in S_1: \mathcal{A}_{1,s} \text{ contains a }k\text{-gonal curve}\}\subset S_1$$
is not contained in a countable union of Zariski closed subsets of $S_1$. Up to passing to a generically finite cover over $S_1$, we get a subvariety $\mathcal{Z}_1\subset\mathcal{A}_{1}^k$ whose fibers over $S_1$ are irreducible curves, and such that $\mathcal{Z}_{1,s}$ is contained in a fiber of $\mathcal{A}_{1,s}^k\to CH_0(\mathcal{A}_{1,s})$ for any $s\in S_1$. Moreover, using translation, we can arrange for $\mathcal{Z}_1$ to be in the kernel of the summation map $\mathcal{A}_1^{k}\to \mathcal{A}_1$. It is easy to show that $(**)$ holds for $l=2k-2$ in this setting. Hence, we can use the Proposition \ref{prop1} to get a family of surfaces foliated by suborbits:
$$\mathcal{Z}_2:=\varphi_{\mathcal{B}^\lambda}\left(p_{\lambda}(\mathcal{Z}_{1,S_\lambda})\right)\subset \varphi_{\mathcal{B}^\lambda}\left((\mathcal{B}^\lambda)^{k,\, 0}\right)=\varphi_{\mathcal{B}^\lambda}(\mathcal{B}^\lambda)^{k,\, 0}.$$
Indeed, observe that $p_{\lambda}(\mathcal{Z}_{1,s})$ varies with $s\in S_\lambda(B)$ if and only if the fiber of $\varphi_{\mathcal{B}^{\lambda}}(p_\lambda(\mathcal{Z}_{1,S_\lambda}))$ over the point $\varphi_{\mathcal{B}^\lambda}(S_\lambda(B))\in \varphi_{\mathcal{B}^\lambda}(S_\lambda)$ has dimension greater than the relative dimension of $p_\lambda(\mathcal{Z}_{1,S_\lambda})$ over $S_\lambda$. Indeed, the fiber of $\varphi_{\mathcal{B}^{\lambda}}(p_\lambda(\mathcal{Z}_{1,S_\lambda}))$ over this point is 
$$\bigcup_{s\in S_\lambda(B)}p_\lambda(\mathcal{Z}_{1,s})\subset B^k.$$

Rename the base $\varphi_{\mathcal{B}^\lambda}(S_\lambda)$ and the locally complete family $\varphi_{\mathcal{B}^\lambda}(\mathcal{B}^\lambda)/\varphi_{\mathcal{B}^\lambda}(S_\lambda)$ as $S_2$ and $\mathcal{A}_2/S_2$. We can hope to apply Proposition \href{prop1}{\ref{prop1}} to $\mathcal{Z}_2$ and proceed by induction. At the $i^{\text{th}}$ step, one gets a variety $\mathcal{Z}_{i}\subset \mathcal{A}_i^{k,\, 0}$, where $\mathcal{A}_i/S_i$ is a locally complete family of abelian $(2k-i)$-folds. The variety $\mathcal{Z}_i$ has relative dimension $i$, irreducible fibers over $S_i$, and is foliated by positive dimensional suborbits. At the $(2k-2)^{\text{nd}}$ step, one obtains a variety $\mathcal{Z}_{2k-2}\subset \mathcal{A}_{2k-2}^{k,\, 0}$ of relative dimension $2k-2$ which is foliated by positive-dimensional suborbits. Since $\dim_{S_{2k-2}}\mathcal{A}_{2k-2}=2$, Corollary \href{absurfbound}{\ref{absurfbound}} then provides the desired contradiction. The key issue with this induction argument is to ensure that condition $(**)$ is satisfied at each step. Since the relative dimension of the variety $\mathcal{Z}_i$ to which we apply Proposition \href{prop1}{\ref{prop1}} grows, it gets harder and harder to ensure generic finiteness of the projection $p_{\lambda}|_{\mathcal{Z}_{i,S_{i,\lambda}}}$. In the next section, we will present a way around this using the fact that $\mathcal{Z}_i$ is obtained by successive specializations and projections.\\

Before proceeding to give a summary of the proof of Proposition \ref{prop1}, we will establish some basic facts related to conditions $(*)$ and $(**)$. The main point is that if $\mathcal{A}/S$ is an almost locally complete family of abelian varieties and $\mathcal{Z}\subset \mathcal{A}^k$ is foliated by positive dimensional suborbits, then conditions $(*)$ and $(**)$ are equivalent for $l\geq 2$.\\

Given an abelian variety $A$, we denote by $T_A:=T_{A,0_A}$ the tangent space to $A$ at $0_A\in A$.
We let 
\begin{align*}\mathscr{T}&:=T_{\mathcal{A}|S},\\
G/S&:=\text{Gr}(g-l,\T)/S,\\
G'/S&:=\text{Gr}(g-l',\T)/S,
\end{align*}
and we consider the following sections
\begin{align*}&\sigma_{\lambda}: S_\lambda\to G_{{S_\lambda}}=\text{Gr}(g-l,\T_{S_\lambda}),\;\qquad \qquad \qquad \sigma_{\lambda}(s):=T_{\ker(p_{\lambda,s})},\\
&\sigma_{\lambda,\mu}: S_{\lambda,\mu}\to G'_{{S_{\lambda,\mu}}}= \text{Gr}(g-l',\T_{S_{\lambda,\mu}}),\qquad \;\;\sigma_{\lambda,\mu}(s):=T_{\ker(p_{\lambda,\mu,s})}. \end{align*}
Finally, we will denote by $\mathcal{H}/G$ and $\mathcal{H}'/G'$ the universal families of hyperplanes and by $\mathcal{H}^{\lambda}/S_\lambda$ and ${\mathcal{H}'}^{\lambda,\mu}/S_{\lambda,\mu}$ the pullbacks of the universal families of hyperplanes by $\sigma_\lambda$ and $\sigma_{\lambda,\mu}$. 

\begin{lemma}\label{dense}
For any $0\leq l'<l<g$, the following subsets are dense: 
\begin{align*}&\bigcup_{\lambda\in \Lambda_l}\sigma_\lambda(S_{\lambda})\subset G\\
\bigcup_{\lambda\in \Lambda_l}&\bigcup_{\mu\in \Lambda^\lambda_{l'}}\sigma_{\lambda,\mu}(S_{\lambda,\mu})\subset G'.\end{align*}
\end{lemma}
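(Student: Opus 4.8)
The two assertions are of the same nature, so I would prove the first one ($l$-case) in detail and then indicate that the second ($l'$-inside-$l$ case) follows by the same argument applied to the family $\mathcal{B}^\lambda/S_\lambda$ (or rather to a generic member of it). The plan is to exploit local completeness of $\mathcal{A}/S$: since $\varphi_{\mathcal{A}}: S \to \mathcal{A}_g^\theta$ is dominant (generically finite), it suffices to prove the analogous density statement on the moduli stack, i.e.\ that as $[A]$ ranges over abelian $g$-folds of polarization type $\theta$ that are isogenous to a product $B\times E$ with $\dim B=l$, and as $\lambda$ ranges over the isogeny data, the subspaces $T_{\ker(p_\lambda)}\subset T_A$ sweep out a dense subset of the relative Grassmannian $\mathrm{Gr}(g-l,\T)$. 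Equivalently, fixing a general point $[A]\in \mathcal{A}_g^\theta$, I want to show that the $(g-l)$-dimensional subspaces of $T_A$ arising as $T_{\ker(p_\lambda)}$ for the various $S_\lambda$ passing through $[A]$, together with their monodromy translates, form a dense subset of $\mathrm{Gr}(g-l,T_A)$ — because then density in the total space $G/S$ follows by spreading out over $S$.

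The key computation is a dimension count. The locus $S_\lambda\subset S$ has codimension equal to the codimension of the corresponding Noether--Lefschetz-type locus in $\mathcal{A}_g^\theta$; a product $B\times E$ with $\dim B = l$, $\dim E = g-l$ has moduli dimension $\binom{l+1}{2}+\binom{g-l+1}{2}$ (plus finite data from the isogeny), against $\binom{g+1}{2}$ for the full space, so $S_\lambda$ has codimension $l(g-l)$ in $S$. On the other hand, the section $\sigma_\lambda: S_\lambda \to G_{S_\lambda}$ followed by the structure map to $S$ lands in the fibre $\mathrm{Gr}(g-l, T_A)$ over a single point $[A]$ only along the sub-locus $S_\lambda(B)$-type slices; what I really want is: as $\lambda$ varies, the union of the images $\sigma_\lambda(S_\lambda)$ inside a single fibre $\mathrm{Gr}(g-l,T_A)$ is Zariski dense. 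Since $\dim\mathrm{Gr}(g-l,T_A)=l(g-l)$, and since the collection of $S_\lambda$'s through $[A]$ is countably infinite (one for each isogeny type / sublattice), each contributing a positive-dimensional family of subspaces via the monodromy action of $\mathrm{Sp}$ on $T_A$, the plan is to argue that no proper Zariski-closed subset of $\mathrm{Gr}(g-l,T_A)$ can contain all of them: the monodromy group acts with dense orbits on the Grassmannian, and a single $S_\lambda$ already produces a subvariety not contained in any fixed proper closed subset invariant under a finite-index subgroup. Concretely, I would fix one isogeny type $\lambda_0$, note $\sigma_{\lambda_0}$ produces (after applying monodromy / translating $[A]$ within $S_{\lambda_0}$) the $\mathrm{Sp}(2g,\Z)$-orbit of a fixed $(g-l)$-plane, then invoke that this orbit is Zariski dense in $\mathrm{Gr}(g-l,T_A)$ because the Zariski closure of the image of monodromy is the full symplectic group (by e.g.\ the Deligne semisimplicity / Zariski-density results for monodromy of families of abelian varieties) and $\mathrm{Sp}$ acts on $\mathrm{Gr}(g-l,\C^{2g})$ — wait, on $T_A\cong \C^g$ — transitively, so even a single generic plane has dense orbit under the image in $\mathrm{GL}(T_A)$. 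Spreading this density out over $S$ gives density of $\bigcup_\lambda \sigma_\lambda(S_\lambda)$ in $G$.

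For the second assertion I would run the same argument in the family $\mathcal{B}^\lambda/S_\lambda$: for a fixed generic $\lambda$, the family $\mathcal{B}^\lambda/S_\lambda$ is locally complete among abelian $l$-folds, so by the first part (with $l'$ in place of $l$ and $l$ in place of $g$) the union over $\mu\in\Lambda^\lambda_{l'}$ of $\sigma_{\lambda,\mu}(S_{\lambda,\mu})$ is dense in $\mathrm{Gr}(l-l',\T')$ over $S_\lambda$; then pushing forward under the projection $\T_{S_\lambda}\twoheadrightarrow \T'$ (which is what identifies a $(g-l')$-plane in $\T$ containing $T_{\ker p_\lambda}$ with an $(l-l')$-plane in $\T'$, using that $\sigma_{\lambda,\mu}(s)=T_{\ker p_{\lambda,\mu,s}}\supset T_{\ker p_{\lambda,s}}$) and then taking the union over $\lambda$ using the first part again, one gets density in $G'$. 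The main obstacle, I expect, is making the ``union over countably many $\lambda$ is Zariski dense'' step rigorous: a countable union of positive-dimensional subvarieties need not be dense, so the real content is that \emph{one single} $S_\lambda$ (for suitable $\lambda$) already contributes a dense subset of the Grassmannian via monodromy — i.e.\ the orbit-density statement — and this is where I would need to invoke the big-monodromy theorem for locally complete families of polarized abelian varieties and check that the relevant orbit in the Grassmannian is genuinely dense. Everything else is a codimension bookkeeping exercise and an application of Remark \ref{convention} to absorb the generically finite base changes needed to define the sections $\sigma_\lambda$.
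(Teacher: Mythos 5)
The central step of your plan --- fixing a general $[A]$ and showing that the subspaces $T_{\ker(p_\lambda)}$ are dense in the single fibre $\mathrm{Gr}(g-l,T_A)$ via big monodromy --- does not work, and this is precisely where the real content of the lemma lies. First, for a very general $[A]$ the abelian variety is simple, so \emph{no} locus $S_\lambda$ passes through $[A]$ and the fibre of $\bigcup_\lambda\sigma_\lambda(S_\lambda)$ over $[A]$ is empty; even for $[A]$ lying on some $S_{\lambda_0}$, the set of $\lambda$ with $[A]\in S_\lambda$ is at most countable and there is no reason for the corresponding countable set of points of $\mathrm{Gr}(g-l,T_A)$ to be dense. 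Second, the monodromy group acts on $H_1(\mathcal{A}_s,\Z)$, not on the holomorphic tangent space $T_{\mathcal{A}_s}$: the bundle $G=\mathrm{Gr}(g-l,\T)$ is a Grassmannian bundle of a holomorphic vector bundle, not of a local system, so there is no monodromy action on $G$ at all, and the phrase ``the $\mathrm{Sp}(2g,\Z)$-orbit of a fixed $(g-l)$-plane in $T_A$'' has no meaning (your own hesitation ``on $\C^{2g}$ --- wait, on $T_A\cong\C^g$'' flags exactly this). What monodromy actually does is permute the loci $S_\lambda$ among themselves; the translate $\gamma(S_{\lambda_0})$ is some other $S_{\lambda'}$, a different subvariety of $S$ which generically does not pass through $[A]$, so no new points are produced in the fibre over $[A]$.

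The paper's proof avoids this entirely by trading ``density in a general fibre'' for ``fibrewise density over a dense set of special points.'' One restricts to the dense locus of $s\in S$ with $\mathcal{A}_s$ isogenous to $E^g$ for an elliptic curve $E$. Over such an $s$ there are \emph{many} $\lambda$ with $s\in S_\lambda$: every integer matrix $M$ of rank $g-l$ gives an abelian subvariety $E^{g-l}_M\subset E^g$, hence a point $\sigma_\lambda(s)\in G_s$ equal to the image of $T_{E^{g-l}_M}$ under the differential of the isogeny. The subspaces $T_{E^{g-l}_M}$ are exactly the rational points of $\mathrm{Gr}(g-l,T_{E^g})$ for the product $\Q$-structure, and these are dense in the Grassmannian; since the base locus of such $s$ is itself dense in $S$, the union $\bigcup_\lambda\sigma_\lambda(S_\lambda)$ is dense in $G$. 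No monodromy or dimension count is needed. Your codimension bookkeeping ($\mathrm{codim}\,S_\lambda=l(g-l)=\dim\mathrm{Gr}(g-l,T_A)$) is correct but, as you suspected, cannot by itself force a countable union of codimension-$2l(g-l)$ subvarieties of $G$ to be dense; the missing ingredient is the specialization to self-products of elliptic curves. The same remark applies to your treatment of the second assertion, which inherits the gap.
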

\begin{proof}
We only show this for the first subset. The same argument can be used for the second. It suffices to consider the locus of abelian varieties isogenous to $E^g$ for some elliptic curve $E$. This locus is dense in $S$ and given $s$ in this locus and any $M\in M_{k\times (g-l)}(\mathbb{Z})$ of rank $(g-l)$, the image of the tangent space $T_{E^{g-l}_{M}}$ under the differential of the isogeny is contained in $\sigma_\lambda(S_{\lambda})$ for some $\lambda\in \Lambda_l$. To finish the proof, it suffices to observe that the following subset is dense:
$$\{T_{E^{g-l}_{M}}: M\in M_{k\times (g-l)}(\mathbb{Z}),\; \text{rank} (M)=g-l\}\subset \text{Gr}(g-l,T_{E^g}).$$
\end{proof}

For the rest of this section, $\mathcal{A}/S$ is a locally complete family of abelian varieties of dimension at least $2$ and $\mathcal{Z}\subset \mathcal{A}^k$ is a subvariety which dominates $S$ and has irreducible fibers.

\begin{lemma}
If $\mathcal{Z}\subset \mathcal{A}^k$ is foliated by positive dimensional suborbits, then for very general $s\in S$ the subset $\mathcal{Z}_s$ is not an abelian subvariety of the form $A^r_M$ with $r>0$.
\end{lemma}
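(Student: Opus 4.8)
The plan is to argue by contradiction: suppose that for some value of $s$ lying in a set which is not contained in a countable union of proper Zariski-closed subsets of $S$ (equivalently, by irreducibility of $S$, for very general $s$ since the locus is a countable union of closed sets), the fiber $\mathcal{Z}_s$ is an abelian subvariety of the form $A^r_M\subset \mathcal{A}_s^k$ with $r>0$. First I would note that $\mathcal{Z}_s$ being foliated by positive-dimensional suborbits is a hypothesis that holds for \emph{all} $s\in S$; in particular it holds for each such very general $s$. So we would have an abelian surface—wait, not a surface—an abelian subvariety $A^r_M$ of $\mathcal{A}_s^k$ that is foliated by positive-dimensional suborbits.

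The next step is to reduce to the case where $\mathcal{A}_s$ is an abelian surface, using the specialization machinery already in place. Since the locus of $s$ with $\mathcal{Z}_s\cong A^r_M$ is not contained in a countable union of proper closed subsets, and since the condition that $\mathcal{A}_s$ be isogenous to a product $E^g$ of an elliptic curve (or more generally contain an abelian surface factor, as in Lemma \ref{dense}) cuts out a dense subset of $S$, we can choose such an $s$ lying in a locus $S_\lambda$ for $l=g-2$, so that $\mathcal{A}_s\sim \mathcal{B}^\lambda_s\times \mathcal{E}^\lambda_s$ with $\mathcal{B}^\lambda_s$ an abelian surface. Actually the cleanest route is: the property ``$\mathcal{Z}_s$ is some $A^r_M$'' for very general $s$ propagates under the projection $p_\lambda$, since the image of an abelian subvariety of $\mathcal{A}_s^k$ under the isogeny-times-projection map $p_\lambda$ is again an abelian subvariety, of the form $B^{r'}_{M'}$ inside $(\mathcal{B}^\lambda_s)^k$ for some matrix $M'$ (possibly of smaller rank, but still positive rank once we check $p_\lambda$ does not kill $\mathcal{Z}_s$; and if $p_\lambda(\mathcal{Z}_s)$ were a point then $\mathcal{Z}_s$ lies in a translate of a proper abelian subvariety, contradicting the foliation hypothesis via Corollary \ref{notfoliated2} applied after one more specialization, or more directly by noting a point cannot be foliated by positive-dimensional suborbits). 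Moreover, $p_\lambda(\mathcal{Z}_s)$ is still foliated by positive-dimensional suborbits because $p_\lambda$ sends a cycle rationally equivalent to zero on $\mathcal{A}_s$ to a cycle rationally equivalent to zero on $\mathcal{B}^\lambda_s$ (push-forward preserves rational equivalence), so a suborbit maps into a suborbit. Iterating the specialization/projection down to the case where the base abelian variety is a surface, we arrive at an abelian surface $A'$ and a subvariety of the form $A'^{r''}_{M''}\subset (A')^k$ with $r''>0$ that is foliated by positive-dimensional suborbits.

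But this is exactly what Corollary \ref{notfoliated2} (equivalently Lemmas \ref{vanish2} and \ref{absub} combined) forbids: on an abelian surface, no subvariety of the form $A'^{r''}_{M''}$ can be foliated by positive-dimensional suborbits, because $\omega_k$ restricts to a holomorphic symplectic form on $A'^{r''}_{M''}$, while the foliation by positive-dimensional suborbits would force a suitable power of $\omega_k$ to vanish on it by Proposition \ref{vanish}/Lemma \ref{vanish2}. This contradiction shows that the assumption was false, and hence for very general $s\in S$ the fiber $\mathcal{Z}_s$ is not of the form $A^r_M$ with $r>0$.

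The main obstacle I anticipate is bookkeeping in the reduction step: one must verify that at each specialization $S\supset S_\lambda$ one can still choose $s\in S_\lambda$ in the ``bad'' locus (this uses that the bad locus meets each dense constructible piece, which follows since it is not contained in a countable union of proper closed subsets while $\bigcup_\lambda S_\lambda$ is dense), and that the image matrix $M'$ under $p_\lambda$ still has positive rank — i.e., that $\mathcal{Z}_s$ is not contained in the kernel of $p_\lambda$. The latter is where one genuinely uses the foliation hypothesis rather than pure linear algebra, so some care is needed; alternatively one can sidestep it by invoking the generic finiteness of $p_\lambda$ on $\mathcal{Z}_s$ from condition $(*)$ if that is available in context, but since the lemma is stated without assuming $(*)$, the direct argument via ``a point is not foliated by positive-dimensional suborbits'' is preferable.
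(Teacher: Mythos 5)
Your overall strategy coincides with the paper's: specialize to a locus where $\mathcal{A}_s$ is isogenous to a product with an abelian surface factor, push $(\mathcal{A}_s)^r_M$ forward under $p_\lambda$ to obtain $(\mathcal{B}^\lambda_s)^r_M$ (still foliated by positive-dimensional suborbits, since pushforward preserves rational equivalence), and contradict Corollary \ref{notfoliated2}. However, one step is justified incorrectly. You claim the bad locus meets $S_\lambda$ ``since it is not contained in a countable union of proper closed subsets while $\bigcup_\lambda S_\lambda$ is dense.'' That implication is false: a set not contained in any countable union of proper Zariski-closed subsets need not meet a given dense countable union of proper closed subsets (each $S_\lambda$ is itself a proper closed subset, so a priori the bad locus could avoid all of them). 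The correct argument --- which your first paragraph sets up but does not complete --- is that the bad locus is the countable union, over pairs $(r,M)$, of the Zariski-closed sets $\{s\in S:\mathcal{Z}_s=(\mathcal{A}_s)^r_M\}$; if this union is not contained in a countable union of proper closed subsets, then one of these closed sets must equal all of $S$, i.e.\ $\mathcal{Z}_s=(\mathcal{A}_s)^r_M$ for a single fixed $(r,M)$ and \emph{every} $s\in S$. Only then may you restrict to $s\in S_\lambda$ with $\lambda\in\Lambda_2$ (note the indexing: the paper's $\mathcal{B}^\lambda$ has dimension $l$, so you want $l=2$, not $l=g-2$) and invoke Lemma \ref{dense}. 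With this repair your proof is the paper's proof. Your worry about the rank of $M'$ dropping is moot: $p_\lambda$ is the $k$-fold cartesian product of a single surjection $\mathcal{A}_s\to\mathcal{B}^\lambda_s$, so it carries $(\mathcal{A}_s)^r_M$ onto $(\mathcal{B}^\lambda_s)^r_M$ with the same matrix $M$ of rank $r$.
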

\begin{proof}
Consider the Zariski closed sets
$$\{s\in S: \mathcal{Z}_s=(\mathcal{A}_{s})^r_M\}\subset S.$$
There are countably many such sets so it suffices to show that none of them is all of $S$. Suppose that $\mathcal{A}^r_M$ is foliated by positive dimensional suborbits. By the previous lemma, there is a $\lambda\in \Lambda_2$ such that $p_\lambda((\mathcal{A}_{s})^r_M)=(\mathcal{B}^\lambda_s)^r_M$ is also foliated by positive dimensional suborbits for generic $s\in S_\lambda$. This contradicts Corollary \href{notfoliated2}{\ref{notfoliated2}}.
\end{proof}

\begin{lemma}\label{*implies**}
If $\mathcal{Z}\subset \mathcal{A}^k/S$ is foliated by positive dimensional orbits and $l\geq 2$, the conditions $(*)$ and $(**)$ are equivalent.
\end{lemma}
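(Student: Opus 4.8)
The plan is to show the nontrivial implication $(*)\implies(**)$, i.e.\;to upgrade density of $R_{gf}$ to density of $R_{ab}\cap R_{gf}$. Since $R_{ab}\cap R_{gf}\subset R_{gf}$, and $R_{gf}$ is a countable union of (base changes of) constructible sets $R_{gf}^\lambda=\{s\in S_\lambda: p_\lambda|_{\mathcal{Z}_s}\text{ is generically finite onto its image}\}$, it suffices to argue that for each $\lambda$ with $R_{gf}^\lambda$ dense in some component of $S_\lambda$, a very general $s$ in that component also lies in $R_{ab}$, i.e.\;$p_\lambda(\mathcal{Z}_s)$ is not an abelian subvariety of $(\mathcal{B}^\lambda_s)^k$. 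Suppose toward a contradiction that for a dense (hence, up to Zariski closure, all) set of $s\in S_\lambda$, the image $p_\lambda(\mathcal{Z}_s)$ is an abelian subvariety; then the relative image $p_\lambda(\mathcal{Z}_{S_\lambda})\subset(\mathcal{B}^\lambda)^k$ is a family of abelian subvarieties of the $(\mathcal{B}^\lambda_s)^k$ of some fixed dimension $d$ (using generic finiteness, $d=\dim\mathcal{Z}_s$). Because $\mathcal{B}^\lambda/S_\lambda$ is locally complete and we may pass to the dense sublocus where $\mathcal{B}^\lambda_s$ is isogenous to $E^l$, Lemma~\ref{dense} lets me further specialize: there is $\mu$ and a locus where $\mathcal{B}^\lambda_s$ splits off an abelian surface, and $p_\mu(p_\lambda(\mathcal{Z}_s))$ is again an abelian subvariety of the form $A^r_M$ inside a power of an abelian surface.

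Now I invoke the fact—part of the point of this batch of lemmas—that $\mathcal{Z}\subset\mathcal{A}^k$ is foliated by positive-dimensional suborbits, and that being so foliated is preserved under the projections $p_\lambda$ and $p_\mu$: if $z'\in p_\lambda(\mathcal{Z}_s)$, pick $z\in\mathcal{Z}_s$ over it; then $|z|\cap\mathcal{Z}_s$ is positive-dimensional, and $p_\lambda$ maps it into $|p_\lambda(z)|\cap p_\lambda(\mathcal{Z}_s)$ (the projection of rationally equivalent zero-cycles on $\mathcal{A}_s$ to $\mathcal{B}^\lambda_s$ stays rationally equivalent, since $p_\lambda$ comes from an isogeny followed by a projection, both of which act on $CH_0$). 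Generic finiteness of $p_\lambda|_{\mathcal{Z}_s}$ (which holds for $s$ in the dense set $R_{gf}^\lambda$) guarantees the image of this positive-dimensional suborbit is still positive-dimensional, so $p_\lambda(\mathcal{Z}_s)$—and after a further generically finite projection $p_\mu$ as above, the variety $A^r_M$—is foliated by positive-dimensional suborbits. But Corollary~\ref{notfoliated2} says a subvariety of the form $A^r_M$ in a power of an abelian surface cannot be foliated by positive-dimensional suborbits. This contradiction shows $p_\lambda(\mathcal{Z}_s)$ is not abelian for very general $s\in S_\lambda$, so $R_{ab}$ contains a dense open of $S_\lambda$ whenever $R_{gf}^\lambda$ is dense there; summing over $\lambda$ gives density of $R_{ab}\cap R_{gf}$, which is $(**)$.

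The step I expect to be the main obstacle is controlling the \emph{family} behavior rather than the fiberwise behavior: I need to know that when $p_\lambda(\mathcal{Z}_s)$ is abelian for a dense set of $s$, it is abelian of \emph{constant} dimension on a whole component, so that the relative image is genuinely a family of abelian subvarieties amenable to the specialization in Lemma~\ref{dense}; this requires a semicontinuity/constructibility argument for the locus where a fiber of a family is an abelian subvariety, plus care that passing to the $E^l$-isogenous locus (which is dense in $S_\lambda$, not open) is compatible with the density hypothesis on $R_{gf}^\lambda$. A secondary technical point is the preservation of the "foliated by positive-dimensional suborbits" property under $p_\mu\circ p_\lambda$ together with generic finiteness at \emph{both} stages—one must choose $s$ in the (countable) intersection of the relevant dense sets, which is fine since each is dense constructible and we only need very general $s$. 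Modulo these bookkeeping issues, the argument is exactly the interplay between Lemma~\ref{dense}, the suborbit-foliation hypothesis, and Corollary~\ref{notfoliated2}.
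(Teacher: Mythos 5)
Your argument is correct and follows essentially the same route as the paper: generic finiteness of $p_\lambda|_{\mathcal{Z}_s}$ pushes the suborbit foliation forward to $p_\lambda(\mathcal{Z}_s)$, simplicity of $\mathcal{B}^\lambda_s$ for very general $s$ forces a putative abelian image to be of the form $(\mathcal{B}^\lambda_s)^r_M$, and a further specialization splitting off an abelian surface combined with Corollary \ref{notfoliated2} yields the contradiction. The paper merely packages the last two steps into the unnamed lemma immediately preceding this one (which is where the hypothesis $l\geq 2$ is used), and it glosses over the same technical points you flag, such as why the suborbits are not contracted by the second projection.
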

\begin{proof}
Condition $(**)$ obviously implies condition $(*)$, so it suffices to show the other implication. First, observe that if $p_{\lambda}|_{\mathcal{Z}_s}: \mathcal{Z}_s\to (\mathcal{B}_s^\lambda)^k$ is generically finite on its image, then its image is foliated by positive dimensional suborbits. In this case, $R_{gf}\cap S_\lambda$ is open in $S_\lambda$ and, for very general $s\in R_{gf}\cap S_\lambda$, the abelian variety $\mathcal{B}_s^\lambda$ is simple. By Lemma \href{notfoliated2}{\ref{notfoliated2}}, for such an $s$ the closed subset $p_\lambda(\mathcal{Z}_s)$ cannot be an abelian subvariety of $(\mathcal{B}^\lambda_s)^k$. Hence, $R_{gf}\cap R_{ab}\cap S_\lambda$ is dense in $S_\lambda$. This shows that $R_{ab}\cap R_{gf}\subset S$ is dense provided that $R_{gf}$ is.
\end{proof}

\subsection{Outline of the proof of Proposition \ref{prop1}}
\vspace{1em}
\noindent Let us now sketch the proof of Proposition \ref{prop1} by highlighting the key steps. 
\begin{itemize}
\item \emph{Step (I)}: We reduce to the case where $(***)$ is satisfied. We will show that if $(**)$ is satisfied but $(***)$ is not, there is an $r\in \mathbb{Z}_{>0}$ and an $M\in M_{k\times r}(\mathbb{Z})$ such that $\mathcal{Z}/\mathcal{A}^r_M$ satisfies $(**)$ and $(***)$. We can then proceed with the argument replacing $\mathcal{Z}\subset\mathcal{A}^k$ with $\mathcal{Z}/\mathcal{A}^r_M\subset \mathcal{A}^k/\mathcal{A}^r_M$.
\item \emph{Step (II)}: As in the proof of Proposition 2.4 from \cite{V}, we argue that, though the projections $p_\lambda|_{\mathcal{Z}_{S_\lambda}}$ are only defined along loci $S_\lambda$, there is a way to realize many of them birationally as specializations of a map defined over all of $G$. Namely, we show that there is a variety $\mathcal{Z}'_G/G$ and a generically finite morphism $p: \mathcal{Z}_G\to \mathcal{Z}'_G$ that identifies birationally to 
$$p_\lambda|_{\mathcal{Z}_{S_\lambda}}: (\mathcal{Z}_{G})_{\sigma_\lambda(S_\lambda)}=\mathcal{Z}_{S_\lambda}\to p_\lambda(\mathcal{Z}_{S_\lambda})$$
along many loci $\sigma_\lambda(S_\lambda)\subset G$. This uses crucially the density result from Lemma \ref{dense}.
\item \emph{Step (III)}: Resolving the singularities of $\mathcal{Z}_G$ and $\mathcal{Z}'_G$, we obtain a morphism $\widetilde{p}: \widetilde{\mathcal{Z}}_G\to \widetilde{\mathcal{Z}'}_G$ which coincides with $p$ up to birational equivalence. We examine the map
$$\widetilde{p}_*\circ \widetilde{j}^*: \text{Pic}^0(\mathcal{A}_t^k)\xrightarrow{\widetilde{j}^*} \text{Pic}^0(\mathcal{Z}_t)\xrightarrow{\widetilde{p}_*} \text{Pic}^0(\mathcal{Z}'_t),$$
where $\widetilde{j}$ is the composition of the natural map $\rho: \widetilde{\mathcal{Z}_G}\to \mathcal{Z}_G$ and the inclusion $\mathcal{Z}_G\to \mathcal{A}_G^k$. This is a non-zero morphism of abelian varieties and, applying the convention of Remark \ref{convention}, we see that there is a $r\in \{0,\ldots, k-1\}$ and a matrix $M\in M_{k\times r}(\mathbb{Z})$ such that $\ker(\widetilde{p}_*\circ \widetilde{j}^*)$ contains $\text{Pic}^0(\mathcal{A}_t)^r_M$ as a finite index subgroup for every $t\in G$.\\

\indent But if $\widetilde{p}$ agrees birationally with the map $p_\lambda\circ \rho: (\widetilde{\mathcal{Z}_G})_{\sigma_\lambda(S_\lambda)}\to p_\lambda(\mathcal{Z}_{S_\lambda})$ along $\sigma_\lambda(S_\lambda)$ and $p_\lambda(\mathcal{Z}_{s})$ does not vary with $s\in S_\lambda$, the abelian variety
$$\text{Pic}^0\left(\widetilde{p_\lambda(\mathcal{Z}_{s})}\right)$$
admits an isogeny from the abelian variety $\text{Pic}^0\left((\mathcal{E}^\lambda_s)^k/(\mathcal{E}^\lambda_s)^r_M\right)$
for any $s\in S_\lambda(B)$. Since the set of abelian varieties in the locally complete family $\varphi_{\mathcal{E}^\lambda}(\mathcal{E}^\lambda)/\varphi_{\mathcal{E}^\lambda}(S_\lambda)$ admitting a non-zero morphism to a fixed abelian variety is discrete, we reach the desired contradiction.
\end{itemize}


\subsection{Step (I): Reduction to the validity of condition $(***)$}

\vspace{0.5em}
\begin{lemma}\label{(***)}
If $\mathcal{Z}$ satisfies condition $(**)$ for some $l$, there is an $r\in \{1,\ldots, k\}$ and an $M\in M_{k\times r}(\mathbb{Z})$ such that $\mathcal{Z}/\mathcal{A}^r_M\subset \mathcal{A}^k/\mathcal{A}^{r}_M$ satisfies conditions $(**)$ and $(***)$ for $l$.
\end{lemma}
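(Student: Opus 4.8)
The plan is to analyze the failure of $(***)$ by extracting the common positive-dimensional stabilizer and quotienting it out, so that the resulting family over $\mathcal{A}^k/\mathcal{A}^r_M$ genuinely has non-stabilized fibers. First I would observe that, by definition of $R_{st}$ failing to be dense in the presence of $(**)$, there is a Zariski dense subset of $S$ on which $p_\lambda(\mathcal{Z}_s)$ is generically finite over its image, is not an abelian subvariety, but \emph{is} stabilized by a positive-dimensional abelian subvariety of $(\mathcal{B}^\lambda_s)^k$. The key point is that the stabilizer is a countably constrained piece of data: for a subvariety $W\subset C^k$ with $C$ an abelian variety, the connected component of its stabilizer $\mathrm{Stab}(W)^0$ is an abelian subvariety, and over a locally complete base the ``type'' of this abelian subvariety (i.e.\;the matrix $M$ up to the relevant equivalence, using that $C$ is simple for very general $s$ in the relevant locus) is constant along a dense subset. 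So there is a single $r\in\{1,\dots,k\}$ and a single $M\in M_{k\times r}(\mathbb{Z})$ of rank $r$ such that, for $s$ in a dense subset of $S_\lambda$, the stabilizer of $p_\lambda(\mathcal{Z}_s)$ has connected component $(\mathcal{B}^\lambda_s)^r_M$; pulling back along the isogeny-compatible identification, $\mathcal{Z}_s$ itself is stabilized (as a set, up to the isogeny) by $\mathcal{A}^r_M$, at least generically, which is exactly the statement that $\mathcal{Z}/\mathcal{A}^r_M\subset \mathcal{A}^k/\mathcal{A}^r_M$ makes sense as a family with irreducible fibers of the appropriate dimension.

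Next I would verify the two conditions for the quotiented family. For $(***)$: if $\mathcal{Z}/\mathcal{A}^r_M$ still failed $(***)$, the stabilizer of $p_\lambda(\mathcal{Z}_s/\mathcal{A}^r_M)$ in $((\mathcal{B}^\lambda_s)^k/(\mathcal{B}^\lambda_s)^r_M)$ would be positive-dimensional on a dense set; pulling this back to $(\mathcal{B}^\lambda_s)^k$ would produce a strictly larger abelian subvariety than $(\mathcal{B}^\lambda_s)^r_M$ stabilizing $p_\lambda(\mathcal{Z}_s)$, contradicting the maximality built into the choice of $r$ and $M$ (one should choose $M$ so that $r$ is maximal among all such stabilizer types occurring on a dense subset — a countability/pigeonhole argument since there are only countably many $(r,M)$ and $S$ is not a countable union of proper closed subsets). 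For $(**)$: one must check $R_{gf}$ and $R_{ab}$ remain dense for the quotient. Generic finiteness is inherited because $p_\lambda|_{\mathcal{Z}_s}$ generically finite on its image, together with $\mathcal{Z}_s$ being stabilized exactly by $\mathcal{A}^r_M$, forces the induced map on quotients $\mathcal{Z}_s/\mathcal{A}^r_M\to p_\lambda(\mathcal{Z}_s)/(\mathcal{B}^\lambda_s)^r_M$ to again be generically finite on its image — here one uses that taking the quotient by the (entire) stabilizer does not collapse further generic positive-dimensional fibers. That $p_\lambda(\mathcal{Z}_s)/(\mathcal{B}^\lambda_s)^r_M$ is not an abelian subvariety follows because an abelian subvariety of $(\mathcal{B}^\lambda_s)^k/(\mathcal{B}^\lambda_s)^r_M$ pulls back to an abelian subvariety of $(\mathcal{B}^\lambda_s)^k$, and $p_\lambda(\mathcal{Z}_s)$, being the preimage of its image in the quotient (by the stabilizer property), would then itself be abelian, contradicting $R_{ab}$; so $R_{ab}\cap R_{gf}$ stays dense.

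The main obstacle I anticipate is the bookkeeping around making the matrix $M$ and the integer $r$ \emph{uniform} over a dense subset of $S$ — i.e.\;turning the pointwise statement ``$p_\lambda(\mathcal{Z}_s)$ is stabilized by some positive-dimensional abelian subvariety'' into ``there is one $(r,M)$ working densely.'' This needs: (i) the stratification of $S_\lambda$ by the isomorphism type of the stabilizer's connected component is by constructible sets, so one of them is dense in $S_\lambda$; (ii) along such a stratum one may, after a generically finite base change (invoking Remark \ref{convention}), trivialize the stabilizer to a fixed $\mathcal{A}^r_M$, using that for very general $s$ in the stratum $\mathcal{B}^\lambda_s$ — hence $\mathcal{A}_s$ — is simple, so every abelian subvariety of the power is of the form $A^r_M$; and (iii) maximality of $r$ is arranged by running over all $\lambda\in\Lambda_l$ and all candidate $(r,M)$, of which there are only countably many, and picking the one with $r$ maximal whose associated locus is dense. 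Once this uniformization is in place, the verification of $(**)$ and $(***)$ for $\mathcal{Z}/\mathcal{A}^r_M$ is the routine diagram-chase sketched above, and the lemma follows.
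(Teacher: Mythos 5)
Your overall plan --- extract the stabilizer, quotient by it, and check $(**)$ and $(***)$ for the quotient --- matches the paper's starting point, and your observations that generic finiteness and the non-(abelian-subvariety) property descend to the quotient are essentially the paper's intermediate lemma. But the heart of the proof is precisely the step you relegate to ``bookkeeping'': producing one pair $(r,M)$ that works along a subset of $\bigcup_{\lambda}S_\lambda$ which is dense in $S$, and your mechanism for this does not go through. The conditions $(**)$ and $(***)$ ask for density in $S$ of a union over the \emph{countably many} loci $S_\lambda$, each a proper closed subvariety; so working inside one $S_\lambda$ proves nothing, and your step (iii) --- partition the $\lambda$'s by the generic stabilizer type of $p_\lambda(\mathcal{Z}_s)$ and pick a class whose associated union is dense --- is a countable pigeonhole applied to topological density, which fails: a dense countable union split into countably many classes need have no dense class (compare $\mathbb{Q}=\bigcup_q\{q\}$ in $\mathbb{R}$). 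The paper only ever pigeonholes over \emph{finitely} many classes (the intermediate covers in Lemma \ref{cov1}, the function $D$ in Proposition \ref{mainprop}), and nothing bounds the number of stabilizer types $(r,M)$ occurring as $\lambda$ ranges over $\Lambda_l$, since the degree of $p_\lambda(\mathcal{Z}_s)$ is not uniformly bounded in $\lambda$.

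The paper's actual mechanism is different and is the real content of the lemma: it fixes $(r,M)$ as the stabilizer of $p_{\lambda_0}(\mathcal{Z}_{s_0})$ at a \emph{single} point with $\mathcal{B}^{\lambda_0}_{s_0}$ simple, and never claims that the stabilizer of $p_\lambda(\mathcal{Z}_s)$ equals $(\mathcal{B}^\lambda_s)^r_M$ for other $\lambda$. It only needs the weaker statement that $p_\lambda(\mathcal{Z}_s)/(\mathcal{B}^\lambda_s)^r_M$ has no positive-dimensional stabilizer, and it detects this via the generic finiteness of the composition $\pi_{\mathcal{H}}\circ g$ (Gauss map followed by projection modulo the universal hyperplane), a rational map defined over the \emph{whole} Grassmannian bundle $G$ and not just over the loci $\sigma_\lambda(S_\lambda)$. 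Generic finiteness at the single point $\sigma_{\lambda_0}(s_0)$ --- which uses the Griffiths--Harris fact that the Gauss map of a subvariety not stabilized by an abelian subvariety is generically finite --- is an open condition on $G$, and the open set $U$ where it holds meets the dense union $\bigcup_\lambda\sigma_\lambda(S_\lambda)$ in a dense set, whose image in $S$ lands in $R_{st}\cap R_{gf}$ for the quotient family. This propagation-by-openness across different $\lambda$ is the idea missing from your proposal; without it, or a proof that only finitely many stabilizer types occur, your uniformization step is a genuine gap.
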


\begin{proof}
Since $(**)$ is satisfied, we can find a $\lambda_0\in \Lambda_l$ and an $s_0\in S_{\lambda_0}$, such that $p_{\lambda_0}|_{\mathcal{Z}_{s_0}}$ is generically finite on its image and $\mathcal{B}_{s_0}^{\lambda_0}$ is simple. Suppose that $p_{\lambda_0}(\mathcal{Z}_{s_0})$ is stabilized by $(\mathcal{B}_{s_0}^{\lambda_0})_M^r$ but not by a larger abelian subvariety of $(\mathcal{B}_{s_0}^{\lambda_0})^k$. Then, the subvariety$$p_{\lambda_0}(\mathcal{Z}_{s_0})/(\mathcal{B}_{s_0}^{\lambda_0})_M^r\subset(\mathcal{B}_{s_0}^{\lambda_0})^k/ \mathcal(\mathcal{B}_{s_0}^{\lambda_0})_M^r$$
is not stabilized by any abelian subvariety of $(\mathcal{B}_{s_0}^{\lambda_0})^k/ \mathcal(\mathcal{B}_{s_0}^{\lambda_0})_M^r$. We claim that $\mathcal{Z}/\mathcal{A}^r_M\subset \mathcal{A}^k/\mathcal{A}^r_M$ satisfies $(**)$ and $(***)$.\\

The idea is to use the Gauss map to leverage the fact that $p_{\lambda_0}(\mathcal{Z}_{s_0})/(\mathcal{B}_{s_0}^{\lambda_0})_M^r\subset(\mathcal{B}_{s_0}^{\lambda_0})^k/ \mathcal(\mathcal{B}_{s_0}^{\lambda_0})_M^r$ is not stabilized by an abelian subvariety in order to deduce similar information about $p_\lambda(\mathcal{Z}_{S_\lambda})/(\mathcal{A}_{S_\lambda})^r_M$ for $\lambda\neq \lambda_0$. For each $\lambda\in \Lambda_l$ such that $p_{\lambda}|_{\mathcal{Z}_{S_\lambda}}$ is generically finite on its image, we have a commutative diagram:\\
\vspace{1em}
 \begin{equation}\label{lambdadiag1}
 \begin{tikzcd}
 \mathcal{Z}_{S_{\lambda}} \ar[r,dashed,"g"] \arrow{d}[swap]{p_{\lambda}}& \text{Gr}(d,\T_{S_{\lambda}}^k) \ar[d,dashed,"\pi_{\mathcal{H}^{\lambda}}"]\\
 p_{\lambda}(\mathcal{Z}_{S_{\lambda}}) \ar[r,dashed,"g"]& \text{Gr}\big(d,(\T_{S_{\lambda}}/\mathcal{H}^{\lambda})^k\big),
 \end{tikzcd}
 \end{equation}
where $g$ denotes the Gauss map and $\pi_{\mathcal{H}^{\lambda}}$ is the rational map induced by the quotient map $\T_{S_{\lambda}}^k\to (\T_{S_{\lambda}}/\mathcal{H}^{\lambda})^k$. Note that in this diagram and in what follows we take the liberty to write $p_\lambda$ for the map $p_\lambda|_{\mathcal{Z}_{{S}_\lambda}}: \mathcal{Z}_{S_\lambda}\to p_\lambda(\mathcal{Z}_{S_\lambda})$. We can consider the analogous diagram obtained by quotienting by $\mathcal{A}^{r}_M$:
 \begin{equation}
 \begin{tikzcd}\label{lambdadiag2}
 \mathcal{Z}_{S_{\lambda}}/(\mathcal{A}_{S_\lambda})^r_M \ar[r,dashed,"g"] \arrow{d}[swap]{p_{\lambda}}& \text{Gr}\Big(d,\T_{S_{\lambda}}^k/(\T_{S_{\lambda}})^r_M\Big) \ar[d,dashed,"\pi_{\mathcal{H}^{\lambda}}"]\\
 p_{\lambda}(\mathcal{Z}_{S_{\lambda}})/(\mathcal{A}_{S_\lambda})^r_M \ar[r,dashed,"g"]& \text{Gr}\Big(d,\T_{S_{\lambda}}^k/[(\mathcal{H}^{\lambda})^k+(\T_{S_{\lambda}})^r_M]\Big).
 \end{tikzcd}
 \end{equation}
Here we abuse notation by writing $p_{\lambda}$ and $\pi_{\mathcal{H}^{\lambda}}$ for the maps induced by $p_{\lambda}$ and $\pi_{\mathcal{H}^{\lambda}}$ on the quotients. Note that it is not a priori clear that the bottom Gauss map takes value in the Grassmanian of $d$-planes. This is ensured by the fact that $p_{\lambda}: \mathcal{Z}_{S_\lambda}\to p_{\lambda}(\mathcal{Z}_{S_\lambda})$ is generically finite on its image. We show this in the following lemma:

\begin{lemma}Consider an abelian variety $A\sim B\times E$, where $B$ and $E$ are abelian varieties of smaller dimension. Let $p: A^k\to {B}^k$ be the composition of the projection with the isogeny. Given $r\in \{1,\ldots, k\}$ and $M\in M_{k\times r}(\mathbb{Z})$, denote also by $p$ the map $A^k/A_{M}^r\to {B}^k/{B}_{M}^r$ induced by $p$. If $Z\subset A^k$ is such that $p|_{Z}: Z\to B^k$ is generically finite on its image, then $p|_{Z/A_M^r}$ is also generically finite on its image.
\end{lemma}
\begin{proof}
Since $p|_{Z/A_M^r}$ is proper and locally of finite presentation, it suffice to show that it is quasi-finite over an open in its image. The fiber of $p|_{Z/A_M^r}: Z/A^r_M\to {B}^k/{B}^r_M$ over $p(z)\in p(Z)/{B}^r_M$ is the set of all $A^r_M$-cosets of the fiber of $p$ over $p(z)$. Hence, the fiber of $p|_{Z/A_M^r}$ over $p(z)$ is finite whenever the fiber of $p|_{Z}$ over $p(z)$ is finite.
\end{proof}

Now, the key observation is that the upper right corner of diagram \ref{lambdadiag2} is a specialization along $\sigma_\lambda(S_\lambda)\subset G$ of the following diagram defined over $G:=\text{Gr}(g-l,\T)$:

 \begin{equation}\label{Gdiag}
 \begin{tikzcd}
 \mathcal{Z}_G/(\mathcal{A}_G)^r_M \ar[r,dashed,"g"] & \text{Gr}\Big(d,\T_{G}^k/(\T_{G})^r_M\Big) \ar[d,dashed,"\pi_{\mathcal{H}}"]\\
&  \text{Gr}\Big(d,\mathscr{T}_G^k/[\mathcal{H}^k+(\mathscr{T}_G)^r_M]\Big).
 \end{tikzcd}
 \end{equation}
 
 Moreover, the composition of these rational maps is well-defined. Indeed, it is defined over $\sigma_{\lambda_0}(s_0)\in G$ since $p_{\lambda_0}|_{\mathcal{Z}_{s_0}/(\mathcal{A}_{s_0})^r_M}$ is generically finite on its image and diagram \ref{lambdadiag2} is commutative. In fact, the composition $\pi_{\mathcal{H}}\circ g$ is generically finite on its image over $\sigma_{\lambda_0}(s_0)\in G$
since both $p_{\lambda_0}|_{\mathcal{Z}_{s_0}/(\mathcal{A}_{s_0})^r_M}$ and the Gauss map of $p_{\lambda_0}(\mathcal{Z}_{s_0}/(\mathcal{A}_{s_0})^r_M)$ are generically finite on their images. For the Gauss map, this follows from results of Griffiths and Harris (see (4.14) in \cite{GH}) along with the fact that $p_{\lambda_0}(\mathcal{Z}_{s_0}/(\mathcal{A}_{s_0})^r_M)\subset (\mathcal{B}^{\lambda_0}_{s_0})^k/(\mathcal{B}^{\lambda_0}_{s_0})^r_M$ is not stabilized by an abelian subvariety.\\
 
 Hence, there is an open $U\subset G$ over which $\pi_{\mathcal{H}}\circ g$ is a well-defined rational map and generically finite on its image. We claim that for any $\lambda\in \Lambda_l$ the set $\sigma_\lambda(S_\lambda)\cap U$ maps to $R_{st}\cap R_{gf}$ under the natural map from $G$ to $S$. This implies that condition $(***)$ is satisfied since the following subset is dense:
 $$\bigcup_{\lambda\in \Lambda_l}\sigma_\lambda(S_\lambda)\cap U.$$
 Our claim follows from the fact that $g\circ \pi_{\mathcal{H}^\lambda}$ is generically finite on its image over any $s$ such that $\sigma_\lambda(s)\in U$. Because diagram \ref{lambdadiag2} is commutative, this implies both that $p_{\lambda}(\mathcal{Z}_{s}/(\mathcal{A}_{s})^r_M)\subset (\mathcal{B}^{\lambda}_{s})^k/(\mathcal{B}^{\lambda}_{s})^r_M $ is not stabilized by an abelian subvariety and that $p_{\lambda}: \mathcal{Z}_{s}/(\mathcal{A}_{s})^r_M\to p_\lambda(\mathcal{Z}_{s}/(\mathcal{A}_{s})^r_M)$ is generically finite.
\end{proof}

Lemma \ref{(***)} allows us to reduce to the case where $\mathcal{Z}\subset \mathcal{A}^k$ satisfies condition $(***)$ to prove Proposition \ref{prop1}. Indeed, if $\lambda\in \Lambda_l$ and $B$ in the family $\mathcal{B}^\lambda$ are such that the family 
$$p_{\lambda}(\mathcal{Z}_{S_{\lambda}(B)}/\mathcal{A}^r_{M})\subset ({B}^k/{B}^r_{M})_{S_\lambda(B)}$$
gives rise to a finite morphism from $S_{\lambda}(B)$ to $\text{Chow}_d({B}^k/{B}^{r}_M)$, then the family 
$$p_{\lambda}(\mathcal{Z}_{S_{\lambda}(B)})\subset {B}^k_{S_\lambda(B)}$$
also gives rise to a finite morphism from $S_{\lambda}(B)$ to $\text{Chow}_d(B^k)$. Hence, if the conclusion of Proposition \ref{prop1} holds for $\mathcal{Z}/\mathcal{A}^r_M$, then it also holds for $\mathcal{Z}$ itself. While everything we will do from now on is valid with $\mathcal{Z}/\mathcal{A}^{r}_M$ satisfying $(***)$, we will keep writing $\mathcal{Z}\subset \mathcal{A}^k$ in an effort to simplify the notation.\\

\subsection{Step (II): Birational factorization}

In the previous section we saw that, given $\mathcal{Z}\subset \mathcal{A}^k$ satisfying condition $(***)$, we have a rational map
$$q:=\pi_{\mathcal{H}}\circ g: \mathcal{Z}_G\dashrightarrow \text{Gr}\big(d,(\mathscr{T}_G/\mathcal{H})^k\big),$$
which is generically finite on its image, and, along each locus $\sigma_\lambda(S_\lambda)$, a factorization
\begin{equation}
\begin{tikzcd}\label{diagfact}
\mathcal{Z}_{\sigma_\lambda(S_\lambda)}\cong \mathcal{Z}_{S_\lambda} \ar[rr,dashed, "q:=\pi_\mathcal{H}\circ g"] \ar[dr,swap,"p_\lambda"] &\; & \text{Gr}\big(d, (\mathscr{T}_G/\mathcal{H})^k\big)\\
 &  p_{\lambda}(\mathcal{Z}_{S_{\lambda}}).\ar[ur,swap,dashed,"g"]& 
\end{tikzcd}
\end{equation}

In this section our goal is to show that there exists a factorization of $q$ over $G$ that identifies birationally with $p_\lambda$ along $\sigma_\lambda(S_\lambda)$ for many $\lambda\in \Lambda_l$. This is a key input in Step (III). While the content of this section is likely known to experts, we decided to spell it out at length because it plays a decisive role in the argument.\\

\begin{lemma}\label{cov1}
Consider $\mathcal{Z}/S$, a family with irreducible fibers and base, and $q: \mathcal{Z}/S\to \mathcal{X}/S$ such that $q_s: \mathcal{Z}_s\to \mathcal{X}_s$ is generically finite for each $s\in S$. Let $S'\subset S$ be a Zariski dense subset such that for each $s'\in S'$ we have a factorization of $q$ over $s'$ as follows:
$$
\begin{tikzcd}\label{diagfact}
\mathcal{Z}_{s'} \ar[rr, "q"] \ar[dr,swap,"f_{s'}"] &\; & \mathcal{X}_s\\
 &  f(\mathcal{Z}_{s'}).\ar[ur,swap,"g_{s'}"]& 
\end{tikzcd}
$$
Then there is a family $\mathcal{Z}'/S$, morphisms $p: \mathcal{Z}\to \mathcal{Z}'$ and $p': \mathcal{Z}'\to \mathcal{X}$, and a Zariski dense subset $S''\subset S'$ such that:
\begin{itemize}
\setlength\itemsep{0em}
\item For any $s''\in S''$, the varieties $p(\mathcal{Z}_{s''})$ and $f(\mathcal{Z}_{s''})$ are birational,
\item  $p$ and $p'$ induce the same morphism on function fields as $f_{s''}$ and $g_{s''}$ respectively over any $s''\in S''$.
\end{itemize}
\end{lemma}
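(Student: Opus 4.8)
The plan is to build the family $\mathcal{Z}'/S$ from a suitable relative Stein-type factorization of $q$, and then check that it restricts correctly over a dense subset by a ``spreading out'' argument. First I would factor $q\colon \mathcal{Z}\to\mathcal{X}$ over $S$ by taking $\mathcal{Z}':=\operatorname{Spec}_{\mathcal{X}}\big(\text{integral closure of }\mathcal{O}_{\mathcal{X}}\text{ in }\mathcal{Z}\big)$, or more concretely by letting $\mathcal{Z}'$ be the normalization of the image of $q$ inside $\mathcal{X}$ together with the factorization $\mathcal{Z}\xrightarrow{\,p\,}\mathcal{Z}'\xrightarrow{\,p'\,}\mathcal{X}$; since $q_s$ is generically finite for every $s$, after shrinking $S$ we may assume $q$ is dominant onto its image with generic fiber finite, so $p'$ is finite over a dense open and $p$ has geometrically connected generic fibers. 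The point is that this factorization is ``as canonical as possible'': any factorization of a dominant map through a variety with the same function-field-closure will agree with it birationally. I would then intersect with the locus over which the construction commutes with base change to geometric points of $S$ --- this is an open (in fact, the complement of a proper closed) subset of $S$ by generic flatness and constructibility, and it meets $S'$ in a dense subset, which I take to be $S''$.

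Next, for $s''\in S''$, I need to compare $p(\mathcal{Z}_{s''})\subset\mathcal{Z}'_{s''}$ with $f(\mathcal{Z}_{s''})$. Both sit between $\mathcal{Z}_{s''}$ and $\mathcal{X}_{s''}$ as the image of $\mathcal{Z}_{s''}$, and both factorizations $q_{s''}=p'_{s''}\circ p_{s''}$ and $q_{s''}=g_{s''}\circ f_{s''}$ realize $q_{s''}$. On function fields this gives two intermediate fields $\mathbb{C}(f(\mathcal{Z}_{s''}))$ and $\mathbb{C}(p(\mathcal{Z}_{s''}))$ between $\mathbb{C}(\mathcal{X}_{s''})$ and $\mathbb{C}(\mathcal{Z}_{s''})$. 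By the base-change property built into the definition of $S''$, the field $\mathbb{C}(p(\mathcal{Z}_{s''}))$ is the \emph{separable algebraic closure} (equivalently, since we are in characteristic $0$, the algebraic closure) of $\mathbb{C}(\mathcal{X}_{s''})$ inside $\mathbb{C}(\mathcal{Z}_{s''})$ --- this is precisely the Stein-factorization property specialized fiberwise. On the other hand $\mathbb{C}(f(\mathcal{Z}_{s''}))$ is \emph{some} intermediate field with $f(\mathcal{Z}_{s''})\to\mathcal{X}_{s''}$ generically finite, hence it is algebraic over $\mathbb{C}(\mathcal{X}_{s''})$, hence contained in $\mathbb{C}(p(\mathcal{Z}_{s''}))$. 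To get the reverse inclusion and conclude they are equal (so $p(\mathcal{Z}_{s''})$ and $f(\mathcal{Z}_{s''})$ are birational, and $p,p'$ and $f_{s''},g_{s''}$ induce the same maps on function fields), I would use that $f_{s''}\colon\mathcal{Z}_{s''}\to f(\mathcal{Z}_{s''})$ is by hypothesis such that $f(\mathcal{Z}_{s''})$ is the image, so $\mathbb{C}(\mathcal{Z}_{s''})$ is generated over $\mathbb{C}(f(\mathcal{Z}_{s''}))$; combined with $q_{s''}=g_{s''}\circ f_{s''}$ being generically finite, the extension $\mathbb{C}(\mathcal{Z}_{s''})/\mathbb{C}(f(\mathcal{Z}_{s''}))$ is itself finite, which forces $\mathbb{C}(f(\mathcal{Z}_{s''}))$ to contain the algebraic closure of $\mathbb{C}(\mathcal{X}_{s''})$ in $\mathbb{C}(\mathcal{Z}_{s''})$, i.e. all of $\mathbb{C}(p(\mathcal{Z}_{s''}))$.

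The main obstacle I anticipate is purely a matter of \emph{bookkeeping of dense subsets and base change}: one must be careful that the locus where relative normalization commutes with passage to fibers is genuinely dense (this can fail at special fibers, hence the need to pass from $S'$ to a possibly smaller but still dense $S''$), and that ``image of $\mathcal{Z}_{s''}$'' is the same whether formed inside $\mathcal{Z}'_{s''}$ or by specializing the relative image --- again true only generically. I would handle this by invoking generic flatness, the openness of the locus of geometrically integral fibers, and EGA-style statements that normalization commutes with smooth (here: generic, in char $0$) base change; none of these is deep, but stating them cleanly is where the care lies. Everything else is the elementary field theory sketched above, and the hypothesis that $\mathcal{Z}/S$ has irreducible fibers and base is exactly what makes the generic-fiber function fields behave uniformly.
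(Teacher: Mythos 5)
Your construction of $\mathcal{Z}'$ cannot work, and the gap is visible in your own field theory. Since $q_{s''}\colon\mathcal{Z}_{s''}\to\mathcal{X}_{s''}$ is generically finite onto its image, the extension $\mathbb{C}(\mathcal{Z}_{s''})/\mathbb{C}(q(\mathcal{Z}_{s''}))$ is already finite algebraic, so the algebraic closure of $\mathbb{C}(\mathcal{X}_{s''})$ in $\mathbb{C}(\mathcal{Z}_{s''})$ is all of $\mathbb{C}(\mathcal{Z}_{s''})$, and your relative integral closure (equivalently the Stein-type factorization) is birational to $\mathcal{Z}$ itself: the ``connected fibers'' part of a generically finite map is trivial. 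Consequently your last step --- that finiteness of $\mathbb{C}(\mathcal{Z}_{s''})/\mathbb{C}(f(\mathcal{Z}_{s''}))$ forces $\mathbb{C}(f(\mathcal{Z}_{s''}))$ to contain that algebraic closure --- would force $f_{s''}$ to be birational, which is precisely the uninteresting case. In the intended application the $f_{s'}$ are the restrictions $p_\lambda|_{\mathcal{Z}_{s'}}$, genuinely intermediate covers of degree possibly greater than $1$, and they vary with $s'$ (different $\lambda$ select different factorizations), so no canonical fiberwise-functorial construction such as normalization or Stein factorization can recover them.

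What is actually needed, and what the paper does, is a monodromy argument followed by a pigeonhole step. One first shrinks so that $q$ is finite \'etale and $\mathcal{X}\to S$ is a topological fiber bundle (Ehresmann with an snc divisor at infinity), making $q$ a covering map. Because each $\mathcal{Z}_s$ is connected, restriction to a fiber induces an isomorphism between the Galois group of the Galois closure of $\mathcal{Z}/\mathcal{X}$ and that of $\mathcal{Z}_{s}/\mathcal{X}_{s}$, hence an equivalence between intermediate covers of $\mathcal{Z}\to\mathcal{X}$ and intermediate covers of $\mathcal{Z}_{s}\to\mathcal{X}_{s}$; thus each fiberwise factorization $f_{s'}$ extends uniquely to a global intermediate cover. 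Since $\mathcal{Z}\to\mathcal{X}$ has only finitely many intermediate covers, partitioning $S'$ according to which global cover is selected and choosing a part that is dense in $S$ produces $S''$ and the global factorization $p$, $p'$. Your proposal contains neither the extension step nor the pigeonhole, and these two steps are the entire content of the lemma.
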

\begin{proof}
We first restrict to a Zariski open subset of $\mathcal{X}$ (which we call $\mathcal{X}$ in keeping with Remark \ref{convention}) over which $q$ is finite \'etale and such that $\mathcal{X}\to S$ is smooth. By work of Hironaka, we can find a compactification $\overline{\mathcal{X}}$ of $\mathcal{X}$ with simple normal crossing divisors at infinity. Restricting to an open in the base, we can assume that $\overline{\mathcal{X}}_s\setminus \mathcal{X}_s$ is an snc divisor for any $s\in S$. One can use a version of Ehresmann's lemma allowing for an snc divisor at infinity to see that $\mathcal{X}\to S$ is a locally-trivial fibration in the category of smooth manifolds.\\

It follows that we get a covering
\[q: \mathcal{Z}/S\to \mathcal{X}/S.\] Note that we renamed as $\mathcal{Z}$ an open subset of $\mathcal{Z}$ over which $q$ is \'etale. To complete the proof of Lemma \ref{cov1} we will need the following Lemma.

\begin{lemma}\label{cov2}
Consider a covering $q: \mathcal{Z}/S\to \mathcal{X}/S$, with $\mathcal{Z}_s$ connected for every $s\in S$, and a factorization of $q$ over $s_0\in S$:
\[\begin{tikzcd}
\mathcal{Z}_{s_0}\ar[rr,"q"] \ar{dr}[swap]{f_{s_0}} & &\mathcal{X}_{s_0} \\
&f_{s_0}(\mathcal{Z}_{s_0}). \ar{ur}[swap]{g_{s_0}} & 
\end{tikzcd}\]
Then, there is a factorization
\[\begin{tikzcd}
\mathcal{Z}\ar[rr,"q"] \ar{dr}[swap]{f} & &\mathcal{X} \\
&f(\mathcal{Z}), \ar{ur}[swap]{g} & 
\end{tikzcd}\]
which identifies with the original factorization over $s_0\in S$.
\end{lemma}
\begin{proof}
Consider the Galois closure $\mathcal{Z}'\to \mathcal{X}$ of the covering $q: \mathcal{Z}\to \mathcal{X}$. Note that $\mathcal{Z}'_{s_0}$ is connected. Indeed, there is a normal subgroup of the Galois group of $\mathcal{Z}'/\mathcal{X}$ corresponding to deck transformations inducing the trivial permutation of the connected components of $\mathcal{Z}'_{s_0}$. This subgroup corresponds to a Galois cover of $\mathcal{Z}$ since $\mathcal{Z}_{s_0}$ is connected.\\

It follows from the fact that $\mathcal{Z}'_{s_0}$ is connected that the map $\text{Gal}(\mathcal{Z}'/\mathcal{X})\to \text{Gal}(\mathcal{Z}'_{s_0}/\mathcal{X}_{s_0})$ is injective because a deck transformation which is the identity on the base and on fibers must be the identity. Since $\text{Gal}(\mathcal{Z}'/\mathcal{X})$ has order 
$$d:=\deg(\mathcal{Z}'/\mathcal{X})=\deg(\mathcal{Z}_{s_0}'/\mathcal{X}_{s_0}),$$
and $\text{Gal}(\mathcal{Z}'_{s_0}/\mathcal{X}_{s_0})$ has order at most $d$, this restriction morphism must be an isomorphism and $\mathcal{Z}_{s_0}'/\mathcal{X}_{s_0}$ is thus also Galois. One then has an equivalence of categories between the poset of intermediate coverings of $\mathcal{Z}'/\mathcal{X}$ and that of $\mathcal{Z}_{s_0}'/\mathcal{X}_{s_0}$, and hence between the poset of intermediate coverings of $\mathcal{Z}/\mathcal{X}$ and that of $\mathcal{Z}_{s_0}/\mathcal{X}_{s_0}$.
\end{proof}

By the previous lemma, to each factorization $f_{s'}$ we can associate an intermediate cover $\mathcal{Z}\to \mathcal{Z}^{s'}$ of $\mathcal{Z}\to \mathcal{X}$ which agrees with $f_{s'}$ at $s'$. Since there are only finitely many intermediate covers of $\mathcal{Z}\to \mathcal{X}$, we get a partition of $S'$ according to the isomorphism type of the cover $\mathcal{Z}\to \mathcal{Z}^{s'}$. One subset $S''\subset S'$ of this partition must be dense in $S$. Let $f: \mathcal{Z}\to f(\mathcal{Z})$ be the corresponding intermediate cover.
\end{proof}

For the rest of the proof of Proposition \href{prop1}{\ref{prop1}} we are back in the situation of diagram (\ref{diagfact}).

\begin{corollary}
There is a variety $\mathcal{Z}'/G$, a morphism $p: \mathcal{Z}\to \mathcal{Z}'$, and a subset $\Lambda_{l,0}\subset \Lambda_{l}$, such that:
\begin{itemize}
\setlength\itemsep{0em}
\item $\bigcup_{\lambda\in \Lambda_{l,0}}\sigma_{\lambda}(S_{\lambda})\subset G$ is dense,
\item $p_{\lambda}(\mathcal{Z}_t)$ and $p(\mathcal{Z}_t)$ are birational for any $\lambda\in \Lambda_{l,0}$, $t\in \sigma_{\lambda}(S_\lambda)$,
\item $p: \mathcal{Z}_t\to p(\mathcal{Z}_t)$ and $p_{\lambda}: \mathcal{Z}_t\to p_\lambda(\mathcal{Z}_t)$ induce the same map on function fields, for any $\lambda\in \Lambda_{l,0}$, $t\in \sigma_{\lambda}(S_\lambda)$.
\end{itemize}
\end{corollary}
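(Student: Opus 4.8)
The plan is to read the corollary as a direct application of Lemmas \ref{cov1} and \ref{cov2} to the rational map $q=\pi_{\mathcal H}\circ g\colon\mathcal Z_G\dashrightarrow\text{Gr}\big(d,(\mathscr{T}_G/\mathcal H)^k\big)$ of diagram (\ref{diagfact}), with $G$ playing the role of the base ``$S$'' of Lemma \ref{cov1}. First I would put $q$ in the required shape: since $\mathcal Z$ satisfies $(***)$ we have already seen (via the Gauss map and \cite{GH}) that $q$ is generically finite onto its image over a dense open $U\subset G$, so I restrict to $U$, set $\mathcal X:=q(\mathcal Z_U)$, and shrink $U$ and $\mathcal X$ further so that $q\colon\mathcal Z_U\to\mathcal X$ is finite \'etale, $\mathcal X\to U$ is smooth with irreducible fibres, and the fibres of $\mathcal Z_U\to U$ stay irreducible; following Remark \ref{convention} I rename $U$ as $G$. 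Then $q\colon\mathcal Z_G\to\mathcal X$ is a covering and each $q_t$ is generically finite, as Lemma \ref{cov1} demands.

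Next I would isolate the dense subset of $G$ along which factorizations of $q$ are available. Let $\Lambda_l'\subset\Lambda_l$ be the indices for which $p_\lambda|_{\mathcal Z_{S_\lambda}}$ is generically finite onto its image; for these, diagram (\ref{diagfact}) gives a factorization $\mathcal Z_t\to p_\lambda(\mathcal Z_t)\to\mathcal X_t$ of $q_t$ for all $t$ in a dense open of $\sigma_\lambda(S_\lambda)$, which I again call $\sigma_\lambda(S_\lambda)$. Put $S':=\bigcup_{\lambda\in\Lambda_l'}\sigma_\lambda(S_\lambda)$. I would then record a compatibility used twice below: the intermediate cover of $q_t$ attached to a point $t=\sigma_\lambda(s)=\sigma_{\lambda'}(s)\in S'$ is independent of the choice of $\lambda,\lambda'\in\Lambda_l'$, since equality of the tangent spaces forces $\ker p_{\lambda,s}=\ker p_{\lambda',s}$, so $p_{\lambda,s}$ and $p_{\lambda',s}$ differ by an automorphism of the ambient abelian variety.

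With the setup in place I would apply Lemma \ref{cov1} to $q\colon\mathcal Z_G\to\mathcal X$, the dense set $S'$, and the factorizations above, obtaining $\mathcal Z'/G$, morphisms $p\colon\mathcal Z_G\to\mathcal Z'$ and $p'\colon\mathcal Z'\to\mathcal X$ with $q=p'\circ p$, and a dense subset $S''\subset S'$ over which $p(\mathcal Z_t)$ is birational to $p_\lambda(\mathcal Z_t)$ with the same map on function fields (for any $\lambda\in\Lambda_l'$ with $t\in\sigma_\lambda(S_\lambda)$; the previous compatibility makes this unambiguous). I then set $\Lambda_{l,0}:=\{\lambda\in\Lambda_l':\sigma_\lambda(S_\lambda)\cap S''\neq\emptyset\}$; since $S''\subset\bigcup_{\lambda\in\Lambda_{l,0}}\sigma_\lambda(S_\lambda)$ and $S''$ is dense in $G$, item (1) follows.

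The step I expect to be the main obstacle is upgrading items (2) and (3) from the dense subset $S''$ to all of $\sigma_\lambda(S_\lambda)$. Fix $\lambda\in\Lambda_{l,0}$ and, passing to a component, assume $S_\lambda$ and hence $\sigma_\lambda(S_\lambda)$ irreducible. Over $\sigma_\lambda(S_\lambda)$ both $p$ and $p_\lambda$ factor $q$ — globally for $p$ since $q=p'\circ p$, and via (\ref{diagfact}) for $p_\lambda$ — so $p|_{\mathcal Z_{\sigma_\lambda(S_\lambda)}}$ and $p_\lambda$ are two intermediate covers of the covering $q\colon\mathcal Z_{\sigma_\lambda(S_\lambda)}\to\mathcal X_{\sigma_\lambda(S_\lambda)}$. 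By the equivalence of categories established in the proof of Lemma \ref{cov2} (an intermediate cover of such a covering is determined by its restriction over any single point of the base), and since these two intermediate covers agree over a point of $S''$, they are isomorphic as intermediate covers of $q$ over all of $\sigma_\lambda(S_\lambda)$; restricting over an arbitrary $t\in\sigma_\lambda(S_\lambda)$ gives (2) and (3). The points requiring genuine care are the density of $S'$ — which I would obtain by refining Lemma \ref{dense}: on the dense locus where $\mathcal A_s$ is isogenous to a power of an elliptic curve generic finiteness of $p_\lambda|_{\mathcal Z_s}$ is an open condition, and $(***)$ keeps enough $\lambda$ in play — and checking that the shrinking in the Setup does not destroy the irreducibility of fibres needed for Lemma \ref{cov2} nor remove $\sigma_\lambda(S_\lambda)$ for $\lambda\in\Lambda_{l,0}$, the latter being automatic because $S''$ was built inside loci over which $q$ is already a covering.
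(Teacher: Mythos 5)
Your proposal is correct and follows essentially the same route as the paper: apply Lemma \ref{cov1} to $q=\pi_{\mathcal H}\circ g$ over $G$ with the factorizations supplied by diagram (\ref{diagfact}), and then use the fact that the intermediate cover of $q$ associated to $p_\lambda$ is independent of $t\in\sigma_\lambda(S_\lambda)$ to pass from the dense subset $S''$ to all of $\sigma_\lambda(S_\lambda)$ for $\lambda\in\Lambda_{l,0}$. The paper states this constancy as a one-line observation, whereas you justify it via the equivalence of categories from the proof of Lemma \ref{cov2}; the substance is the same.
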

\begin{proof}
This follows from the previous lemma and its proof once we observe that the intermediate covering of $q$ (or rather of an appropriate \'etale restriction of $q$ as above) associated to the factorization $p_{\lambda}: \mathcal{Z}_t\to p_{\lambda}(\mathcal{Z}_t)$ is independent of $t\in \sigma_{\lambda}(S_\lambda)$.
\end{proof}


\vspace{1em}
\subsection{Step (III): Final argument}
\begin{proof}[Proof of Proposition \ref{prop1}]
Using Step (I), we may assume that $\mathcal{Z}\subset \mathcal{A}^k$ satisfies condition $(***)$. By Step (II), we have a variety $\mathcal{Z}'_G/G$, a morphism $p: \mathcal{Z}_G\to \mathcal{Z}'_G$, and a subset $\Lambda_{l,0}\subset \Lambda_l$, such that: \begin{itemize} 
\setlength\itemsep{0em}
\item $p$ identifies birationally with $p_{\lambda}: \mathcal{Z}_{S_\lambda}\to p_\lambda(\mathcal{Z}_{S_\lambda})$ along $\sigma_\lambda(S_\lambda)\subset G$, for all $\lambda\in \Lambda_{l,0}$,
\item $\bigcup_{\lambda\in \Lambda_{l,0}}\sigma_\lambda(S_\lambda)\subset G$ is dense.
\end{itemize}
Up to shrinking $G$, we can consider desingularizations with smooth fibers:
$$\widetilde{p}: \widetilde{\mathcal{Z}_G}/G\to \widetilde{\mathcal{Z}_G'}/G.$$
We have the inclusion
$$j: \mathcal{Z}_G\to \mathcal{A}_G^k$$
as well as the map
$$\widetilde{j}:=j\circ \rho: \widetilde{\mathcal{Z}_G}\to \mathcal{A}_G^k,$$
where $\rho: \widetilde{\mathcal{Z}_G}\to \mathcal{Z}_G$ is the natural map.
The morphism $\widetilde{j}$ gives rise to a pullback map
$$\widetilde{j}^*: \text{Pic}^0(\mathcal{A}_t)\to \text{Pic}^0(\mathcal{\widetilde{Z}}_t).$$
Since $\widetilde{p}$ is generically finite on its image we can consider the composition
$$\widetilde{p}_*\circ \widetilde{j}^* :  \text{Pic}^0(\mathcal{A}^k_t)\to \text{Pic}^0(\widetilde{\mathcal{Z}}_t)\to \text{Pic}^0(\widetilde{\mathcal{Z}_t'}).$$
This is a morphism of abelian varieties defined for every $t\in G$.\\
\;\\

We first show that it is non-zero along $\sigma_{\lambda}(S_\lambda)$ for any $\lambda\in \Lambda_{l,0}$. Consider $t\in \sigma_{\lambda}(S_\lambda)$. The variety $\mathcal{A}_t$ is isogenous to $\mathcal{B}_t^{\lambda}\times \mathcal{E}_t^{\lambda}$ and we have the following commutative diagram
$$
\begin{tikzcd}
\mathcal{Z}_t \arrow{r}{j} \arrow{d}[swap]{{p_\lambda}} & \mathcal{A}_t^k \arrow{d}{p_{\lambda}}\\
p_{\lambda}(\mathcal{Z}_t) \arrow{r}{j'}& (\mathcal{B}_t^{\lambda})^k .
\end{tikzcd}
$$

Consider a desingularization of $p_\lambda(\mathcal{Z}_t)$ and the induced map
$$\widetilde{j}': \widetilde{p_\lambda(\mathcal{Z}_t)}\to  (\mathcal{B}_t^{\lambda})^k.$$
The fact that $\widetilde{p}$ identifies birationally to $p_{\lambda}$ along $\sigma_\lambda(S_\lambda)$ implies that the following diagram is commutative:
$$
\begin{tikzcd}
\text{Pic}^0(\widetilde{\mathcal{Z}}_t) & \text{Pic}^0(\mathcal{A}_t^k)  \arrow{l}[swap]{\widetilde{j}^*}\\
\text{Pic}^0(\widetilde{\mathcal{Z}}'_t)\cong \text{Pic}^0\Big(\widetilde{p_\lambda(\mathcal{Z}_t)}\Big) \arrow{u}{\widetilde{p}^*}  & \text{Pic}^0\left((\mathcal{B}_t^{\lambda})^k\right) \arrow{l}[swap]{\widetilde{j}'^*} \arrow{u}[swap]{p_{\lambda}^*} .
\end{tikzcd}
$$
It follows that
\begin{align*}\widetilde{p}_*\circ (\widetilde{j}^*\circ {p_{\lambda}}^*)=\widetilde{p}_*\circ (\widetilde{p}^*\circ \widetilde{j}'^*)=(\widetilde{p}_*\circ \widetilde{p}^*)\circ \widetilde{j}'^*=[\deg(\widetilde{p})]\circ \widetilde{j}'^*.\end{align*}
Since $p_\lambda(\mathcal{\mathcal{Z}}_t)$ is positive dimensional, the morphism $\widetilde{j}'^*$ is non-zero and so $\widetilde{p}_*\circ \widetilde{j}^*$ is non-zero.\\
\;\\

We conclude that the connected component of the identity of the kernel of $\widetilde{p}_*\circ \widetilde{j}^*$ is an abelian subscheme of $\mathcal{A}^k$ which is not all of $\mathcal{A}^k$. For very general $t\in G$, the abelian variety $\mathcal{A}_t$ is simple. Therefore, for such a $t$, the abelian subvariety 
$$\ker(\widetilde{p}_*\circ \widetilde{j}^*)^0_t:=\ker\left(\widetilde{p}_*\circ \widetilde{j}^*:   \text{Pic}^0(\mathcal{A}^k_t)\to \text{Pic}^0(\widetilde{\mathcal{Z}_t'})\right)^0$$ is of the form $(\mathcal{A}_{t})_M^r$, with $M\in M_{k\times r}(\mathbb{Z})$ of rank $r$, and $r\leq k-1$. Choosing $M$ and $r$ such that 
$$\big\{t\in G: \ker(\widetilde{p}_*\circ \widetilde{j}^*)_t^0=(\mathcal{A}_{t})_M^r\big\}\subset G$$
is dense, and observing that this set is locally closed, we see that, shrinking $G$ if needed, $\ker(\widetilde{p}_*\circ \widetilde{j}^*)_t^\circ=(\mathcal{A}_{t})_M^r$ for all $t\in G$. In particular, for $t\in \sigma_{\lambda}(S_\lambda)$, the abelian variety
$$\text{Pic}^0\big(\ker(p_{\lambda})_t\big)/\ker (\widetilde{p}_*\circ \widetilde{j}^*)_t\cap\text{Pic}^0\big(\ker(p_{\lambda})_t\big)$$
is isogenous to the abelian variety
$$\text{Pic}^0\big(\ker({p_{\lambda}})_t\big)/(\mathcal{A}_{t})_M^r\cap\text{Pic}^0\big(\ker(p_{\lambda})_t\big)\neq 0.\\$$
\;\\

Now consider $\lambda\in \Lambda_l$ such that $\sigma_\lambda(S_\lambda)\neq \emptyset$, namely such $\sigma_{\lambda}(S_\lambda)$ has survived the various base change by generically finite maps, and $B\in \mathcal{B}^\lambda$ such that $\sigma_\lambda(S_{\lambda}(B))\neq \emptyset$. Suppose that there is a curve 
$$C\subset  \sigma_\lambda(S_{\lambda}(B))\cong S_\lambda(B),$$
such that $p_\lambda(\widetilde{Z}_t)=p_\lambda(\widetilde{Z}_{t'})$ for any $t,t'\in C$, namely such that $C$ is contracted by the morphism from $S_{\lambda}(B)$ to $\text{Chow}_d(B^k)$ associated to the family $p_{\lambda}(\mathcal{Z}_{S_{\lambda}(B)})\subset B^k_{S_\lambda(B)}$. Since the abelian variety 
$$\text{Pic}^0(\widetilde{\mathcal{Z}}_t')\cong \text{Pic}^0\Big(\widetilde{p_\lambda(\mathcal{Z}_t)}\Big)$$ does not depend on $t\in C$, it must admit an isogeny from each of the abelian varieties
$$\text{Pic}^0\big(\ker({p_{\lambda}})_t\big)/(\mathcal{A}_{t})_M^r\cap\text{Pic}^0\big(\ker(p_{\lambda})_t\big),\qquad t\in C.$$ Because an abelian variety cannot admit an isogeny from every member of a non-isotrivial family of abelian varieties,
 this provides the desired contradiction. This completes the proof of Proposition \href{prop1}{\ref{prop1}}.
\end{proof}

\section{Salvaging generic finiteness and a proof of Voisin's conjecture}\label{salvaging}
\vspace{0.5em}
In this section, we refine the results from the previous section in order to bypass assumption $(*)$ in the inductive application of Proposition \href{prop1}{\ref{prop1}}. The idea is quite simple: In the last section we saw that we can specialize to abelian varieties $\mathcal{A}_s$ isogenous to a product $B\times E_s$, where $E_s$ is an elliptic curve, in such a way that the image of $\mathcal{Z}_s\subset \mathcal{A}_s^k$ under the projection $\mathcal{A}_s^k\to B^k$ varies with $s$. In this section, we will specialize to abelian varieties $\mathcal{A}_s$ isogenous to a product $D\times F\times E_s$, where $E_s$ is an elliptic curve and $D$ an abelian variety of dimension at least $2$.\\

Under suitable assumptions, the image of $\mathcal{Z}_s$ under the projections $\mathcal{A}_s^k\to D^k$ and $\mathcal{A}_s^k\to (D\times F)^k$ varies with $s$. Hence, if we consider in $(D\times F)^k$ and $D^k$ the union of the image of these projections for every $s$, we get varieties $Z_1\subset (D\times F)^k$ and $Z_2\subset D^k$ of dimension $\dim_S \mathcal{Z}+1$. Since the restriction of the projection $(D\times F)^k\to D^k$ to $Z_1$ has image $Z_2$ and $\dim Z_1=\dim Z_2$, this restriction is generically finite on its image. This allows us to get condition $(*)$ for free when applying Proposition \href{prop1}{\ref{prop1}} inductively. We spend this section making this simple idea rigorous and deducing a proof of Theorem \href{2k-2}{\ref{2k-2}}.\\

\subsection{Salvaging generic finiteness} The main result of this section is the following.
\begin{proposition}\label{mainprop}
Suppose that $\mathcal{Z}\subset \mathcal{A}^k$ satisfies $(**)$ for $l'\geq 2$. There exists a $\lambda\in \Lambda_{(g-1)}$ such that 
$$\varphi_{\mathcal{B}^\lambda}\big(p_\lambda(\mathcal{Z}_{S_\lambda})\big)\subset \varphi_{\mathcal{B}^\lambda}(\mathcal{B}^\lambda)^k$$ satisfies $(*)$ for $l'$ and has relative dimension $\dim_S \mathcal{Z}+1$ over $\varphi_{\mathcal{B^\lambda}}(S_\lambda)$.
\end{proposition}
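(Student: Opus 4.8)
The plan is to apply Proposition \ref{prop1} once, along a deep specialization in $\Lambda_{l'}$, and then to transport the resulting variation statement up to a specialization in $\Lambda_{g-1}$, as sketched at the beginning of this section. Since $\mathcal{Z}$ satisfies $(**)$ for $l'$, Proposition \ref{prop1} gives a $\mu_0\in\Lambda_{l'}$ such that $p_{\mu_0}(\mathcal{Z}_s)$ varies with $s\in S_{\mu_0}(D)$ for every $D$ in the family $\mathcal{D}^{\mu_0}/S_{\mu_0}$; in particular the locus $U\subset S_{\mu_0}$ where $p_{\mu_0}|_{\mathcal{Z}_s}$ is generically finite on its image is dense and open.

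The key step is to produce from $\mu_0$ a suitable $\lambda\in\Lambda_{g-1}$. Inside $S_{\mu_0}$ the complementary family $\mathcal{G}^{\mu_0}$ of abelian $(g-l')$-folds is locally complete, so by the density argument of Lemma \ref{dense} the loci $W\subset S_{\mu_0}$ along which $\mathcal{G}^{\mu_0}_s$ acquires an elliptic factor are dense in $S_{\mu_0}$; as $U$ is dense and open, some such $W$ is not contained in the complement of $U$, so $W\cap U$ is dense in $W$. Over $W$ one has $\mathcal{A}_s\sim\mathcal{D}^{\mu_0}_s\times\mathcal{G}'_s\times\mathcal{E}_s$ with $\mathcal{E}_s$ an elliptic curve, and the quotient by $\mathcal{E}_s$ exhibits $W$ as a sublocus $S_{\lambda,\mu}$ of a locus $S_\lambda$ with $\lambda\in\Lambda_{g-1}$ — the family $\mathcal{B}^\lambda$ of $(g-1)$-folds being locally complete, since a generic point of $S_\lambda$ carries a generic $(g-1)$-fold quotient — and $\mu\in\Lambda^\lambda_{l'}$ with $\mathcal{D}^{\lambda,\mu}\sim\mathcal{D}^{\mu_0}$ along $W$. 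Because $p_{\lambda,\mu}=p_\mu\circ p_\lambda$ agrees with $p_{\mu_0}$ along $W$, the map $p_{\lambda,\mu}|_{\mathcal{Z}_s}$, and hence $p_\lambda|_{\mathcal{Z}_s}$ (whose fibers lie in those of $p_{\lambda,\mu}$), is generically finite on its image for $s$ in a dense subset of $S_{\lambda,\mu}$.

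It then remains to carry out the dimension count. Set $Z_1:=\varphi_{\mathcal{B}^\lambda}(p_\lambda(\mathcal{Z}_{S_\lambda}))$. For $[B]$ ranging over the dense locus of $(g-1)$-folds isogenous to a product $D\times F$ with $\dim D=l'$, the fiber of $Z_1$ over $[B]$ is $Z_{1,B}=\bigcup_{s\in S_\lambda(B)}p_\lambda(\mathcal{Z}_s)\subset B^k$. Choosing $B$ so that the curve $S_\lambda(B)$ meets $W\cap U$, each $p_\lambda(\mathcal{Z}_s)$ there has dimension $\dim_S\mathcal{Z}$, and since $p_{\mu_0}(\mathcal{Z}_s)=p_\mu(p_\lambda(\mathcal{Z}_s))$ varies along $S_\lambda(B)\subset S_{\mu_0}(D)$ — a finite morphism to a Chow variety restricts to a finite morphism on subvarieties — so does $p_\lambda(\mathcal{Z}_s)$; hence $\dim Z_{1,B}\ge\dim_S\mathcal{Z}+1$. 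Upper semicontinuity of fiber dimension propagates this to the generic point of $\varphi_{\mathcal{B}^\lambda}(S_\lambda)$, and the reverse inequality follows from $\dim Z_1\le\dim p_\lambda(\mathcal{Z}_{S_\lambda})\le\dim S_\lambda+\dim_S\mathcal{Z}$ together with $\dim\varphi_{\mathcal{B}^\lambda}(S_\lambda)=\dim S_\lambda-1$, so $Z_1$ has relative dimension $\dim_S\mathcal{Z}+1$. Finally, for such $B$ the projection $p_\nu\colon B^k\to D^k$ attached to the corresponding $\nu\in\Lambda_{l'}(\varphi_{\mathcal{B}^\lambda}(\mathcal{B}^\lambda))$ maps $Z_{1,B}$ onto $\bigcup_s p_{\mu_0}(\mathcal{Z}_s)$, which again has dimension $\dim_S\mathcal{Z}+1$; as $\dim Z_{1,B}=\dim_S\mathcal{Z}+1$ as well, $p_\nu|_{Z_{1,B}}$ is generically finite on its image, and since these $[B]$ are dense in $\varphi_{\mathcal{B}^\lambda}(S_\lambda)$, this is precisely condition $(*)$ for $l'$ for $Z_1$.

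The main obstacle is the salvaging step in the second paragraph: one must ensure that generic finiteness of the projection on $\mathcal{Z}$ survives the additional specialization that splits off an elliptic curve, even though the locus carrying this extra structure is only a proper sublocus of $S_{\mu_0}$ and could in principle be contained in the locus where generic finiteness fails. The resolution — which is what the section title refers to — is that the reducible loci are dense in $S_{\mu_0}$ while the failure locus is a proper closed subset, so one of the former must escape the latter. Once this is in hand, the remaining verifications are routine dimension bookkeeping and the translation into conditions $(*)$ and $(**)$, and I expect only minor care is needed to keep track of the various generically finite base changes along the way.
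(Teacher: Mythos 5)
Your overall strategy---specialize further to split off an abelian $l'$-fold, use the variation statement of Proposition \ref{prop1} to get generic finiteness of the intermediate projection, and read off condition $(*)$ for the projected family---is the right one, and your dimension count and your derivation of generic finiteness of $p_\nu|_{Z_{1,B}}$ from equality of dimensions reproduce the paper's Lemma \ref{salv}. But there is a genuine gap at the very last step, and it is exactly the point the paper's proof is organized around. Condition $(*)$ for $Z_1=\varphi_{\mathcal{B}^\lambda}\big(p_\lambda(\mathcal{Z}_{S_\lambda})\big)$ requires $R_{gf}$ to be dense in the \emph{whole new base} $\varphi_{\mathcal{B}^\lambda}(S_\lambda)$. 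Your construction produces a single $\mu_0\in\Lambda_{l'}$, hence a single pair $(\lambda,\mu)$ and a single locus $W=S_{\lambda,\mu}$; the points $[B]$ at which you verify generic finiteness of $p_\nu|_{Z_{1,B}}$ are those with $S_\lambda(B)\cap W\neq\emptyset$, i.e.\ they lie in $\varphi_{\mathcal{B}^\lambda}(S_{\lambda,\mu})$, the locus of $(g-1)$-folds isogenous to a product $D\times F$ with the fixed isogeny type $\mu$. This is a proper closed subvariety of $\varphi_{\mathcal{B}^\lambda}(S_\lambda)$ (its dimension is that of the moduli of $l'$-folds plus that of the moduli of $(g-1-l')$-folds), so the assertion that ``these $[B]$ are dense'' is false, and $(*)$ does not follow.

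To repair this one needs the variation statement to hold simultaneously for a family of $\mu\in\Lambda^\lambda_{l'}$ whose loci $S_{\lambda,\mu}$ are dense in $S_\lambda$ for one fixed $\lambda$, and Proposition \ref{prop1}, used as a black box, only ever delivers one splitting. This is why the paper does not apply Proposition \ref{prop1} once, but reruns its proof on the two-step loci $S_{\lambda,\mu}$: it partitions $\bigcup_\lambda\Lambda^\lambda_{l'}$ into finitely many classes according to the isomorphism type of the intermediate covering arising in Step (II), and pigeonholes on the Grassmannian to find one $\lambda$ and one class $T$ such that $\bigcup_{\mu\in T\cap\Lambda^\lambda_{l'}}\sigma_{\lambda,\mu}(S_{\lambda,\mu})$ remains dense in $G'_{S_\lambda}$; the argument of Proposition \ref{prop1} then gives variation of $p_{\lambda,\mu}(\mathcal{Z}_t)$ along $S_{\lambda,\mu}(D,F)$ for \emph{all} $\mu$ in that class, and Lemma \ref{salv} converts this into density of $R_{gf}$. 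Your ``main obstacle'' paragraph addresses a different and easier issue (that one splitting locus escapes the non-generically-finite locus); the actual obstacle is the density of the generic-finiteness locus in the new base, which your argument does not establish.
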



The proof of this proposition will rest on the following lemma:

\begin{lemma}\label{salv}
Let $\mathcal{A}/S$ be a locally complete family of abelian $g$-folds and consider $\lambda\in \Lambda_{(g-1)}$ and $\mu\in \Lambda^{\lambda}_{l'}$, where $l'<g-1$. If $p_{\lambda,\mu}(\mathcal{Z}_t)\subset D^k$ varies with $t\in S_{\lambda,\mu}(D,F)$, then $\varphi_{\mathcal{B}^\lambda}\big(S_{\lambda,\mu}(D,F)\big)$ lies in $R_{gf}\subset \varphi_{\mathcal{B}^\lambda}(S_\lambda)$.\end{lemma}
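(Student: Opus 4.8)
The plan is to unwind the definitions and exploit the obvious factorization of projections. Recall that $p_{\lambda,\mu} = p_\mu \circ p_\lambda$, where $p_\lambda : \mathcal{A}^k_{S_\lambda} \to (\mathcal{B}^\lambda)^k$ and $p_\mu : (\mathcal{B}^\lambda_{S_{\lambda,\mu}})^k \to (\mathcal{D}^{\lambda,\mu})^k$ is the projection-plus-isogeny onto the $\mathcal{D}^{\lambda,\mu}$-factor of $\mathcal{B}^\lambda_s \sim \mathcal{D}^{\lambda,\mu}_s \times \mathcal{F}^{\lambda,\mu}_s$. So membership of $\varphi_{\mathcal{B}^\lambda}(S_{\lambda,\mu}(D,F))$ in $R_{gf} \subset \varphi_{\mathcal{B}^\lambda}(S_\lambda)$ amounts to showing that $p_{\mu}$ restricted to $p_\lambda(\mathcal{Z}_t)$ is generically finite onto its image for a dense (equivalently, for very general) $t \in S_{\lambda,\mu}(D,F)$, since $\varphi_{\mathcal{B}^\lambda}$ identifies $S_{\lambda,\mu}(D,F)$ with a subset of $\varphi_{\mathcal{B}^\lambda}(S_\lambda)$ and $p_\mu$ is one of the projections entering into the definition of $R_{gf}$ for the family $\varphi_{\mathcal{B}^\lambda}(\mathcal{B}^\lambda)/\varphi_{\mathcal{B}^\lambda}(S_\lambda)$ with parameter $l'$.

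The key step is a dimension count. By hypothesis, $p_{\lambda,\mu}(\mathcal{Z}_t) \subset D^k$ varies with $t \in S_{\lambda,\mu}(D,F)$; that is, the induced morphism from $S_{\lambda,\mu}(D,F)$ to $\mathrm{Chow}_d(D^k)$ is finite, where $d = \dim_S \mathcal{Z}$. First I would argue that this forces the subvariety $W_1 := \bigcup_{t} p_\lambda(\mathcal{Z}_t) \subset (\mathcal{B}^\lambda)^k$ (more precisely $\varphi_{\mathcal{B}^\lambda}$ of it, sitting over $\varphi_{\mathcal{B}^\lambda}(S_{\lambda,\mu}(D,F))$ inside a fixed $B^k$) to have relative dimension exactly $d+1$ over its base — this uses the standard fact, already invoked in the paper just after Proposition \ref{prop1}, that the total space of a family of cycles varying with the base gains one dimension. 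Similarly $W_2 := \bigcup_t p_{\lambda,\mu}(\mathcal{Z}_t) \subset D^k$ has relative dimension $d+1$ over the same base $\varphi_{\mathcal{B}^\lambda}(S_{\lambda,\mu}(D,F))$, again by the variation hypothesis. Now $p_\mu$ carries $W_1$ onto $W_2$ fiberwise over the base, and the two have the same dimension, so $p_\mu|_{W_1} : W_1 \to W_2$ is generically finite onto its image; hence so is its restriction to a very general fiber, i.e. $p_\mu|_{p_\lambda(\mathcal{Z}_t)}$ is generically finite onto $p_{\lambda,\mu}(\mathcal{Z}_t)$ for very general $t$. That is precisely the statement that $\varphi_{\mathcal{B}^\lambda}(S_{\lambda,\mu}(D,F)) \subset R_{gf}$.

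The main obstacle is bookkeeping rather than mathematics: one must be careful that the base over which $W_1$ and $W_2$ are fibered is the same, that $\varphi_{\mathcal{B}^\lambda}$ does not collapse $S_{\lambda,\mu}(D,F)$ (it does not, since $\varphi_{\mathcal{B}^\lambda}$ is generically finite on $S_\lambda$ and $S_{\lambda,\mu}(D,F) \subset S_\lambda$), and that "varies with $t$" really does yield the extra dimension and not merely $\geq d$ — here one must know that the generic fiber of $\mathcal{Z}/S$ is irreducible of dimension $d$ so that $p_\lambda(\mathcal{Z}_t)$ and $p_{\lambda,\mu}(\mathcal{Z}_t)$ have dimension $\leq d$, which combined with the finiteness of the Chow morphism gives relative dimension exactly $d+1$. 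One should also note that since $d = \dim p_{\lambda,\mu}(\mathcal{Z}_t)$ could a priori be strictly less than $\dim \mathcal{Z}_t$ if $p_\lambda$ or $p_\mu$ were not generically finite on $\mathcal{Z}_t$, the cleanest formulation is the one above purely in terms of the relative dimensions of $W_1$ and $W_2$, which are equal by the variation hypothesis regardless of what happens on $\mathcal{Z}$ itself; then equality of dimension of source and target of $p_\mu|_{W_1}$ gives the conclusion directly.
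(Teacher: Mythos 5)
Your proof is correct and takes essentially the same route as the paper: factor $p_{\lambda,\mu}$ through $p_\lambda$ over the curve $S_{\lambda,\mu}(D,F)$ (which $\varphi_{\mathcal{B}^\lambda}$ contracts to a point), use the variation hypothesis to see that both total spaces $W_1$ and $W_2$ have dimension $\dim_S\mathcal{Z}+1$, and conclude that the intermediate projection is generically finite on its image. One caveat: membership in $R_{gf}$ means generic finiteness of $p_{\mu'}$ on the entire fiber $W_1$ of the new family over the point $\varphi_{\mathcal{B}^\lambda}\big(S_{\lambda,\mu}(D,F)\big)$ --- not, as your opening reduction states, generic finiteness on each individual $p_\lambda(\mathcal{Z}_t)$ for dense $t$, which is strictly weaker (the images $p_{\lambda,\mu}(\mathcal{Z}_t)$ could all coincide) and would not suffice --- but your dimension count on $W_1\to W_2$ does establish the correct, stronger statement.
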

Note that the curve $S_{\lambda,\mu}(D,F)$ is contracted by the map $\varphi_{\mathcal{B}^\lambda}$ from $S_\lambda$ to the moduli stack of polarized abelian $(g-1)$-folds with an appropriate polarization type. 
\begin{proof}Given $\lambda\in \Lambda_{(g-1)}(\mathcal{A})$, the family $\varphi_{\mathcal{B}^\lambda}\big(\mathcal{B}^\lambda\big)$ is locally complete. For any $\mu\in \Lambda_{l'}^\lambda(\mathcal{A})$, there is a $\mu'\in \Lambda_{l'}\big(\varphi_{\mathcal{B}^\lambda}\big(\mathcal{B}^\lambda\big)\big)$ making the following diagram commute:
$$
\begin{tikzcd}
\mathcal{A}_{S_{\lambda, \mu}}^k \ar[rr, "\varphi_{\mathcal{D}^{\lambda,\mu}}\circ p_{\lambda, \mu}"] \ar[dr,swap,"\varphi_{\mathcal{B}^\lambda}\circ p_{\lambda}"] &\; & \varphi_{\mathcal{D}^{\lambda,\mu}}(\mathcal{D}^{\lambda,\mu})^k\\
 &  \varphi_{\mathcal{B}^\lambda}\big(\mathcal{B}^\lambda_{S_{\lambda,\mu}}\big)^k.\ar[ur,swap,"\varphi_{\mathcal{D}^{\lambda,\mu}}\circ p_{\mu'}"]& 
\end{tikzcd}
$$
Consider the restriction of this diagram to $S_{\lambda,\mu}(D,F)$, where $D$ and $F$ are members of the families $\mathcal{D}^{\lambda,\mu}$ and $\mathcal{F}^{\lambda,\mu}$:
\begin{equation}\label{comm}
\begin{tikzcd}
\mathcal{A}_{S_{\lambda, \mu}(D,F)}^k \ar[rr, "\varphi_{\mathcal{D}^{\lambda,\mu}}\circ p_{\lambda, \mu}"] \ar[dr,swap,"\varphi_{\mathcal{B}^\lambda}\circ p_{\lambda}"] &\; & D^k\\
 &  \varphi_{\mathcal{B}^\lambda}\Big(\mathcal{B}^\lambda_{S_{\lambda,\mu}(D,F)}\Big)^k.\ar[ur,swap,"\varphi_{\mathcal{D}^{\lambda,\mu}}\circ p_{\mu'}"]& 
\end{tikzcd}
\end{equation}

If $p_{\lambda,\mu}(\mathcal{Z}_t)$ varies with $t\in S_{\lambda,\mu}(D,F)$, we have
$$\dim (\varphi_{\mathcal{D}^{\lambda,\mu}}\circ p_{\lambda,\mu})\left(\mathcal{Z}_{S_{\lambda,\mu}(D,F)}\right)=\dim_S\mathcal{Z}+1$$
and thus the restriction of $\varphi_{\mathcal{D}^{\lambda,\mu}}\circ p_{\lambda, \mu}$ to the $(\dim_S\mathcal{Z}+1)$-dimensional variety $\mathcal{Z}_{S_{\lambda,\mu}(D,F)}\subset \mathcal{A}^k_{S_{\lambda,\mu}(D,F)}$ is generically finite on its image.
From the commutativity of diagram \ref{comm}, we see that the restriction of $\varphi_{\mathcal{D}^{\lambda,\mu}}\circ p_{\mu'}$ to 
$$(\varphi_{\mathcal{B}^\lambda}\circ p_{\lambda})\left(\mathcal{Z}_{S_{\lambda,\mu}(D,F)}\right)\subset  \varphi_{\mathcal{B}^\lambda}\Big(\mathcal{B}^\lambda_{S_{\lambda,\mu}(D,F)}\Big)^k$$ is generically finite on its image. It follows that $\varphi_{\mathcal{B}^\lambda}(S_{\lambda,\mu}(D,F))\in R_{gf}$.
\end{proof}
\begin{remark}\label{rem}
Note that if $p_{\lambda,\mu}(\mathcal{Z}_t)$ varies with $t\in S_{\lambda,\mu}(D,F)$, then $p_{\lambda}(\mathcal{Z})$ varies with $t\in S_{\lambda,\mu}(D,F)$. In particular, it follows that $(\varphi_{\mathcal{B}^\lambda}\circ p_\lambda) (\mathcal{Z}_{S_\lambda})$ has relative dimension $\dim_S\mathcal{Z}+1$ over $\varphi_{\mathcal{B^\lambda}}(S_\lambda)$.\end{remark}

\begin{proof}[Proof of Proposition \ref{mainprop}]
In light of Remark \ref{rem} and Lemma \ref{salv}, it suffices to show that there is a $\lambda_0\in \Lambda_l$ and a subset $\Lambda_{l',0}^\lambda\subset \Lambda^{\lambda}_{l'}$ such that:
\begin{itemize}
\item $\bigcup_{\mu\in \Lambda_{l',0}^\lambda}\sigma_{\lambda,\mu}(S_{\lambda,\mu})\subset G'_{S_\lambda} \text{ is dense,}$
\item $p_{\lambda,\mu}(\mathcal{Z}_t)$ varies with $t\in S_{\lambda,\mu}(D,F)$ for every $D,F$.
\end{itemize}
Following the same argument as in the proof of Lemma \ref{cov1}, we get a partition
$$\bigcup_{\lambda\in \Lambda_{(g-1)}}\Lambda_{l'}^{\lambda}=T_1\sqcup T_2\sqcup\ldots \sqcup T_n$$ according to the isomorphism type of the \'etale covering associated to the map $p_{\lambda,\mu}$. Consider the following function:
\begin{align*}&D:\Lambda_{(g-1)}\longrightarrow\{I\subset \{1,\ldots, n\}: I\neq\emptyset\}\\
\lambda\mapsto \Big\{i\in &\{1,\ldots, n\}: \bigcup_{\mu\in T_i\cap \Lambda_{l'}^\lambda}\sigma_{\lambda,\mu}(S_{\lambda,\mu})\subset G'_{S_{\lambda,\mu}}\text{ is dense }\Big\}.\end{align*}
Its fibers make up a partition of $\Lambda_{(g-1)}$ and so there is a fiber $D^{-1}(I)$ such that
$$\bigcup_{\lambda\in D^{-1}(I)}\sigma_{\lambda}(S_\lambda)\subset G\text{ is dense}.$$
Pick $i_0\in I$ and let $T=T_{i_0}$. By construction, the following subset is dense for any $\lambda\in D^{-1}(I)$:
$$\bigcup_{\mu\in T\cap \Lambda_{l'}^\lambda}\sigma_{\lambda,\mu}(S_{\lambda,\mu})\subset G'_{S_{\lambda,\mu}}.$$
It follows that the following subset is also dense:
$$\bigcup_{\lambda\in D^{-1}(I)}\bigcup_{\mu\in T\cap \Lambda_{l'}^\lambda}\sigma_{\lambda,\mu}(S_{\lambda,\mu})\subset G'.$$
We can carry out the proof of Proposition \ref{prop1} for $l'$ using the loci $S_{\lambda,\mu}$ with $\lambda\in D^{-1}(I)$ and $\mu\in T$ instead of the loci $S_{\eta}$ with $\eta\in\Lambda_{l'}$. It follows that $p_{\lambda,\mu}(\mathcal{Z}_t)$ varies with $t\in S_{\lambda,\mu}(D,F)$ for every $D$ and $F$ if $\lambda\in D^{-1}(I)$ and $\mu\in T$. Indeed, the key ingredient of Proposition \ref{prop1} is that $\mathcal{Z}$ satisfies $(**)$ along with the density statement of Lemma \ref{dense}. Choosing $\lambda\in D^{-1}(I)$ and setting $\Lambda_{l',0}^{\lambda}:=T\cap \Lambda_{l'}^{\lambda}$ completes the proof of this proposition.
\end{proof}

\vspace{1em}
\subsection{Main results}
\vspace{1em}

\begin{corollary}
Suppose that a very general abelian variety of dimension $g$ has a positive dimensional orbit of degree $k$ and let $A$ be a very general abelian variety of dimension $(g-l)\geq 2$. Then, $A^k$ contains an $(l+1)$-dimensional subvariety foliated by positive dimensional suborbits.
\end{corollary}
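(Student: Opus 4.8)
The plan is to obtain the desired subvariety as the output of a length-$l$ induction, each step of which invokes the specialization-and-projection machinery to trade one dimension of the abelian variety for one dimension of the family of suborbits.

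First I would extract the starting data. Fix a locally complete family $\mathcal A_1/S_1$ of abelian $g$-folds; by hypothesis the locus of $s$ for which $\mathcal A_{1,s}$ carries a positive-dimensional orbit of degree $k$ is not contained in a countable union of proper Zariski closed subsets of $S_1$. Such an orbit contains an irreducible curve, so, using that $\Delta_{\mathrm{rat}}$ is a countable union of closed subsets together with the convention of Remark \ref{convention}, a routine spreading-out argument produces a subvariety $\mathcal Z_1\subset\mathcal A_1^k$ with irreducible curve fibres such that each $\mathcal Z_{1,s}$ is a suborbit; in particular $\mathcal Z_1$ is foliated by positive-dimensional suborbits. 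I would then check that $\mathcal Z_1$ satisfies condition $(**)$ for $l'=2$: by Lemma \ref{*implies**} this reduces to verifying $(*)$, which for a family of curves amounts to the statement that a general fibre $\mathcal Z_{1,s}$ is not contracted by $p_\lambda$ for a suitably chosen $\lambda\in\Lambda_2$, and this follows from Lemma \ref{dense}.

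The induction then runs as follows. Suppose that for some $1\le i\le l$ we have a locally complete family $\mathcal A_i/S_i$ of abelian $(g-i+1)$-folds and a subvariety $\mathcal Z_i\subset\mathcal A_i^k$ with irreducible fibres of dimension $i$, foliated by positive-dimensional suborbits and satisfying $(**)$ for $l'=2$; note that $\dim\mathcal A_i=g-i+1\ge g-l+1=\dim A+1\ge 3$. If $i\le l-1$, or if $i=l$ and $\dim A\ge 3$, then $\dim\mathcal A_i\ge 4$ and Proposition \ref{mainprop} applies with $l'=2$, yielding a $\lambda$ such that $\mathcal Z_{i+1}:=\varphi_{\mathcal B^\lambda}\bigl(p_\lambda(\mathcal Z_{i,S_\lambda})\bigr)$ has relative dimension $i+1$ over the locally complete family $\mathcal A_{i+1}:=\varphi_{\mathcal B^\lambda}(\mathcal B^\lambda)$ of abelian $(g-i)$-folds and satisfies $(*)$ for $l'=2$. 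If instead $i=l$ and $\dim A=2$, then $\dim\mathcal A_l=3$, condition $(**)$ for $l'=2$ coincides with condition $(**)$ for $l=\dim\mathcal A_l-1$, and Proposition \ref{prop1} provides a $\lambda$ for which $p_\lambda(\mathcal Z_{l,s})$ varies with $s\in S_\lambda(B)$ for every $B$; then $\mathcal Z_{l+1}:=\varphi_{\mathcal B^\lambda}\bigl(p_\lambda(\mathcal Z_{l,S_\lambda})\bigr)$ has relative dimension $l+1$ over a locally complete family $\mathcal A_{l+1}$ of abelian surfaces. In either case $\mathcal Z_{i+1}$ is again foliated by positive-dimensional suborbits: $p_\lambda$ is a homomorphism and so carries suborbits to suborbits, it does not contract the suborbits foliating $\mathcal Z_{i,s}$ because $p_\lambda|_{\mathcal Z_{i,s}}$ is generically finite on its image, and $\varphi_{\mathcal B^\lambda}$ is an isomorphism on a general fibre. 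Finally, when $i+1\le l$, Lemma \ref{*implies**} upgrades $(*)$ for $l'=2$ back to $(**)$ for $l'=2$, and the induction continues. Taking $i=l+1$, we obtain $\mathcal Z_{l+1}\subset\mathcal A_{l+1}^k$ of relative dimension $l+1$ over a locally complete family $\mathcal A_{l+1}$ of abelian $(g-l)$-folds, foliated by positive-dimensional suborbits; realising a very general abelian $(g-l)$-fold $A$ as a fibre $\mathcal A_{l+1,s_0}$, the subvariety $\mathcal Z_{l+1,s_0}\subset A^k$ is $(l+1)$-dimensional and foliated by positive-dimensional suborbits, as required.

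I expect the main obstacle to be precisely what Proposition \ref{mainprop} is built to handle: after the first projection the image of $\mathcal Z_2$ need not be generically finite over the relevant Chow variety, and it is the simultaneous two-step specializations of Section \ref{salvaging} that restore condition $(*)$ at every stage. The remaining points needing attention are the boundary case $\dim A=2$ at the final step, where Proposition \ref{mainprop} is unavailable and one must appeal to Proposition \ref{prop1} directly, and the verification that passing to images under $p_\lambda$ preserves the property of being foliated by positive-dimensional suborbits, which hinges on the generic finiteness guaranteed by $(*)$.
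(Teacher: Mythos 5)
Your proof is correct and follows essentially the same route as the paper: the paper's own proof is a one-line appeal to an inductive application of Proposition \ref{mainprop}, using Lemma \ref{*implies**} to recover $(**)$ from $(*)$ after each projection, which is precisely the induction you spell out. Your explicit treatment of the terminal case $\dim A=2$, where Lemma \ref{salv} requires $l'<g-1$ so that the last step must appeal to Proposition \ref{prop1} directly rather than to Proposition \ref{mainprop}, fills in a detail the paper leaves implicit, and you handle it correctly.
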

\begin{proof}
Under the assumption of this corollary, we have a positive dimensional suborbit $\mathcal{Z}\subset \mathcal{A}^k/S$, where $\mathcal{A}\to S$ is a locally complete family of abelian $g$-folds. We can apply Proposition \ref{mainprop} inductively since condition $(**)$ follows from $(*)$ by Lemma 3.3.
\end{proof}

\begin{corollary}\label{conjpf}
Conjecture 1.3 holds: a very general abelian variety of dimension $\geq 2k-1$ has no positive dimensional orbits of degree $k$.
\end{corollary}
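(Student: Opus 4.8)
The plan is to argue by contradiction, feeding a \emph{normalized} positive-dimensional suborbit into the inductive consequence of Proposition \ref{mainprop} recorded in the corollary above (``if a very general abelian $g$-fold has a positive-dimensional orbit of degree $k$ and $A$ is a very general abelian variety of dimension $g-l\geq 2$, then $A^k$ contains an $(l+1)$-dimensional subvariety foliated by positive-dimensional suborbits''), and then invoking the bound of Corollary \ref{normabsurfbound} for normalized subvarieties of powers of an abelian surface. Tracking the normalization through the whole induction is the one point that needs attention, and it is exactly what yields the sharp constant $2k-1$: the normalized surface bound of Corollary \ref{normabsurfbound} is two better than what one gets from Lemmas \ref{vanish2} and \ref{surfbound} alone.

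So suppose, for some $g\geq 2k-1$, that a very general abelian variety of dimension $g$ carries a positive-dimensional orbit of degree $k$, and fix a locally complete family $\mathcal{A}/S$ of abelian $g$-folds. Since orbits are countable unions of Zariski closed subsets (Ro\u{\i}tman), the hypothesis yields, after a generically finite base change and passing to a component and to curves inside the orbits, a subvariety $\mathcal{Z}\subset\mathcal{A}^k$ which dominates $S$ and whose fibers are irreducible curves, each contained in a single fiber of $\mathcal{A}_s^k\to CH_0(\mathcal{A}_s)$; thus $\mathcal{Z}$ is foliated by $1$-dimensional suborbits. The sum of the $k$ coordinates is constant on each fiber, and as $[k]\colon\mathcal{A}\to\mathcal{A}$ is an isogeny we may, after a further generically finite cover, subtract a common section from all $k$ coordinates so that $\mathcal{Z}\subset\mathcal{A}^{k,\,0}$; translating all coordinates by the same element preserves rational equivalence, so $\mathcal{Z}$ remains a family of suborbits. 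Because $\mathcal{Z}$ has $1$-dimensional fibers, condition $(*)$ for $l'=2$ holds --- by Lemma \ref{dense} the curves $\mathcal{Z}_s$ are not contracted by $p_\lambda$ for a dense set of $\lambda\in\Lambda_2$, so $p_\lambda|_{\mathcal{Z}_s}$ is then generically finite onto its image --- and hence $(**)$ for $l'=2$ holds by Lemma \ref{*implies**}.

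Now run the proof of the preceding corollary with $l=g-2$, i.e. apply Proposition \ref{mainprop} inductively (with $l'=2$) starting from $\mathcal{Z}$ until the ambient abelian variety has dimension $g-l=2$. At each step the maps $p_\lambda$ and $\varphi_{\mathcal{B}^\lambda}$ restrict on fibers to coordinatewise homomorphisms of abelian varieties, so the condition that the $k$ coordinates sum to $0$ is preserved throughout. We thus obtain a very general abelian surface $A$ together with a subvariety $Z\subset A^{k,\,0}$ of dimension $g-1$ which is foliated by $d$-dimensional suborbits for some $d\geq 1$. By Corollary \ref{normabsurfbound},
\[
g-1=\dim Z\leq 2(k-1)-d\leq 2k-3,
\]
so $g\leq 2k-2$, contradicting $g\geq 2k-1$. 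Hence no very general abelian variety of dimension $\geq 2k-1$ has a positive-dimensional orbit of degree $k$; since a $k$-gonal curve in an abelian variety $A$ produces a positive-dimensional orbit of degree $k$ in $\Sym^kA$, Conjecture \ref{Vweakconj} follows.

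The main obstacle is not this final assembly but precisely what Sections \ref{specialization} and \ref{salvaging} are built to overcome: keeping condition $(**)$ --- equivalently, generic finiteness of the projections --- alive at every step, since the dimension of $\mathcal{Z}_i$ grows while that of the abelian variety shrinks, so $(*)$ cannot be guaranteed directly. That is exactly what the ``salvaging'' mechanism of Proposition \ref{mainprop} supplies (together with Lemma \ref{*implies**}, which upgrades $(*)$ to $(**)$ for families foliated by positive-dimensional suborbits). What is left here is only bookkeeping: checking that the normalization survives the induction so the sharp Corollary \ref{normabsurfbound} is available, and observing that the single inequality $g-1>2k-3$ handles all $g\geq 2k-1$ at once.
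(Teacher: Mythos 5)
Your proof is correct and follows essentially the same route as the paper: extract a one-dimensional normalized suborbit, verify $(*)$ (hence $(**)$ via Lemma \ref{*implies**}) using the genericity of linear projections, run the inductive application of Proposition \ref{mainprop} down to abelian surfaces, and contradict Corollary \ref{normabsurfbound}. The only cosmetic difference is that you keep track of general $g\geq 2k-1$ via the inequality $g-1\leq 2(k-1)-d$, whereas the paper phrases the endpoint case as $B^{k,\,0}$ itself being foliated by positive-dimensional suborbits.
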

\begin{proof}
Note that a suborbit $\mathcal{Z}\subset \mathcal{A}^k$ of relative dimension $d$ satisfies $(*)$ for $l\geq d$ since the following subset is dense:
$$\bigcup_{\lambda\in \Lambda_l}\sigma_\lambda(S_\lambda)\subset G.$$
Indeed, if $V$ is a vector space and $W\subset V^k$ has dimension $d<\dim V$, then the restriction of the projection $V^k\to (V/H)^k$ to $W$ is an isomorphism onto its image for generic $H\in \text{Gr}(\dim V-d,V)$. In particular, if $\mathcal{Z}\subset \mathcal{A}^k$ has relative dimension $1$ then it satisfies $(*)$ for any $1\leq l\leq g-1$. Hence, if a very general abelian $(2k-1)$-fold has a positive dimensional orbit of degree $k$, for a very general abelian surface $B$, the variety $B^{k,\, 0}$ is foliated by positive dimensional suborbits. By Corollary \href{absurfbound}{\ref{absurfbound}}, this does not hold for any abelian surface, let alone generically.
\end{proof}

\begin{theorem}\label{2k-2}
For $k\geq 3$, a very general abelian variety of dimension at least $2k-2$ has no positive dimensional orbits of degree $k$, i.e.
$\mathscr{G}(k)\leq 2k-2$.\end{theorem}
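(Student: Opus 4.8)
The plan is to push the specialization--projection argument of Corollary \ref{conjpf} one dimension further and then to handle the extremal configuration that arises at the end. Suppose, for contradiction, that a very general abelian variety of dimension $2k-2$ carries a positive-dimensional orbit of degree $k$. By Remark \ref{trick} we may take this orbit to be normalized, and by Ro\u{\i}tman \cite{R1} we may spread it out: there is a locally complete family $\mathcal{A}_1/S_1$ of abelian $(2k-2)$-folds together with an irreducible subvariety $\mathcal{Z}_1\subset \mathcal{A}_1^{k,\,0}$ of relative dimension $1$ over $S_1$ which is foliated by positive-dimensional suborbits. Since $\mathcal{Z}_1$ has relative dimension $1$, the vector-space observation used in the proof of Corollary \ref{conjpf} shows that $\mathcal{Z}_1$ satisfies $(*)$ for every $1\le l\le 2k-3$; being foliated by positive-dimensional suborbits, it then satisfies $(**)$ for $l=2$ by Lemma \ref{*implies**}.

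Next I would iterate Proposition \ref{mainprop} with $l'=2$. Each application lowers the dimension of the family of abelian varieties by one and raises the relative dimension of the family of subvarieties by one, while preserving all the relevant hypotheses: the new family again satisfies $(*)$ for $l'=2$ (hence $(**)$, by Lemma \ref{*implies**}), still lies in the kernel of the summation map (the projections $p_\lambda$ are fibrewise homomorphisms), and is still foliated by positive-dimensional suborbits, so Proposition \ref{mainprop} applies again. Carrying this out $2k-5$ times brings us to a locally complete family $\mathcal{A}'/S'$ of abelian $3$-folds together with an irreducible subvariety $\mathcal{Z}'\subset (\mathcal{A}')^{k,\,0}$ of relative dimension $2k-4$; a last step, now applying Proposition \ref{prop1} directly for $l=2$ (allowed since $(**)$ holds for $l=2$), produces, for a very general abelian surface $B$, a normalized subvariety $Z\subset B^{k,\,0}$ with $\dim Z=2k-3$ which is foliated by positive-dimensional suborbits and, by construction, is covered by a $(2k-4)$-parameter family of them.

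The main obstacle is the endgame on $B$, and this is exactly where the gain over Corollary \ref{conjpf} must come from. Corollary \ref{absurfbound} only yields $\dim Z\le 2(k-1)-d$, which merely forces the foliating suborbits to be $1$-dimensional and $Z$ to be a hypersurface in $B^{k,\,0}$: the inequality is saturated -- this is why the identical argument in dimension $2k-1$ lands on $Z=B^{k,\,0}$ and yields a contradiction, whereas in dimension $2k-2$ it does not, and it is also the reason for the hypothesis $k\ge 3$, the statement being false for $k=2$. To close this gap I would study the extremal configuration: the classes in $CH_0(B)$ of the $1$-dimensional suborbits foliating $Z$ are governed by the class $\gamma_s\in CH_0(\mathcal{A}_{1,s})$ of the original orbit, and the goal is to show, using that a very general abelian surface is simple so that the decomposable configurations realizing equality in Corollary \ref{absurfbound} are excluded, that the general point of $Z$ must lie on a suborbit of dimension at least $2$ -- contradicting Corollary \ref{absurfbound} -- or else to contradict directly Ro\u{\i}tman's results on zero-cycles on surfaces with $p_g\ne 0$ \cite{R2}. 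I expect this to be the hardest part of the argument.

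Finally, the statement for dimension exactly $2k-2$ upgrades to all dimensions $\ge 2k-2$: applying the corollary preceding Corollary \ref{conjpf} with $l=1$ shows that a positive-dimensional orbit of degree $k$ on a very general abelian $g$-fold forces one on a very general abelian $(g-1)$-fold, so the absence of such orbits is an upward-closed condition on the dimension. Hence $\mathscr{G}(k)\le 2k-2$.
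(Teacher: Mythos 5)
Your reduction follows the paper's route essentially verbatim up to the endgame, and you correctly identify that the endgame is where the difficulty lies --- but the paper does close the gap you leave open, and it does so with an ingredient that is already available in the construction and that you discard by counting parameters the wrong way. After the successive degenerations and projections one arrives, for a very general abelian surface $B$, at
$$Z=\bigcup_{s\in S_\lambda(B)}(\varphi_{\mathcal{B}^\lambda}\circ p_\lambda)(\mathcal{Z}_s)\subset B^{k,\,0},\qquad \dim Z=2k-3,$$
covered by the $1$-dimensional suborbits $(\varphi_{\mathcal{B}^\lambda}\circ p_\lambda)(\mathcal{Z}_s)$. You count this covering family as $(2k-4)$-dimensional (the tower of one-parameter elliptic-curve degenerations), which is exactly the borderline count: a $(2k-4)$-parameter family of curves sweeping out a $(2k-3)$-fold puts only finitely many of those curves through a general point, so you only recover the saturated case $d=1$ of Corollary \ref{absurfbound} and no contradiction --- as you observe. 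The missing idea is that the relevant parameter space is not the $(2k-4)$-dimensional tower but the whole locus $S_\lambda(B)$ for $\lambda\in\Lambda_2$, i.e.\ the locus of abelian $(2k-2)$-folds isogenous to $B\times(\text{an abelian }(2k-4)\text{-fold})$, whose dimension is that of the moduli of abelian $(2k-4)$-folds, namely $\binom{2k-3}{2}$. The iterated Proposition \ref{mainprop} shows that $s\mapsto[(\varphi_{\mathcal{B}^\lambda}\circ p_\lambda)(\mathcal{Z}_s)]$ is a $\binom{2k-3}{2}$-parameter family of $1$-dimensional suborbits whose union is nevertheless only $(2k-3)$-dimensional; since
$$\binom{2k-3}{2}+1-(2k-3)=\tfrac{(2k-4)(2k-5)}{2}\geq 1\quad\text{for }k\geq 3,$$
a positive-dimensional subfamily of these suborbits passes through a general point of $Z$. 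Their union at such a point is a suborbit of dimension at least $2$, so $Z$ is foliated by $2$-dimensional suborbits and Corollary \ref{absurfbound} forces $\dim Z\leq 2(k-1)-2=2k-4$, the desired contradiction. (This excess is precisely where the hypothesis $k\geq 3$ enters, rather than in the way you suggest.)

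By contrast, your proposed fallbacks --- analyzing the ``extremal configuration'' via simplicity of $B$, or appealing directly to Ro\u{\i}tman --- are not developed and there is no indication they would succeed; what is needed is not a refinement of Corollary \ref{absurfbound} in the saturated case but the correct count of the covering family. The rest of the proposal (normalization via Remark \ref{trick}, verification of $(*)$ and $(**)$ for the one-dimensional suborbit, the iteration of Proposition \ref{mainprop}, and the final upgrade from dimension exactly $2k-2$ to all dimensions $\geq 2k-2$) matches the paper and is fine.
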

\begin{proof}
Let $\mathcal{A}/S$ be a locally complete family of abelian $2k-2$-folds and $\mathcal{Z}\subset \mathcal{A}^{k,\, 0}$ a one-dimensional normalized suborbit. By the previous corollary, there is a $\lambda\in \Lambda_2$ such that $(\varphi_{\mathcal{B}^\lambda}\circ p_\lambda)(\mathcal{Z}_{S_\lambda})/\varphi_{\mathcal{B}^\lambda}(S_\lambda)$ has relative dimension $2k-3$. This was obtained by successive degenerations and projections. But the morphism
\begin{align*}S_\lambda&\to\text{Chow}_d\Big((\mathcal{B}^{\lambda})^{k,\, 0}\Big)\\
s&\mapsto \left[(\varphi_{\mathcal{B}^{\lambda}}\circ p_\lambda)(\mathcal{Z}_s)\right]\end{align*}
is a $\binom{2k-3}{2}$-parameter family of suborbits in $(\varphi_{\mathcal{B}^\lambda}\circ p_\lambda)(\mathcal{Z}_{S_\lambda})$. Hence, $(\varphi_\lambda\circ p_\lambda)(\mathcal{Z}_{S_\lambda})$ must be foliated by suborbits of dimension at least $2$. This contradicts Corollary \href{absurfbound}{\ref{absurfbound}}.
\end{proof}

\begin{corollary}\label{gonalitybound}
For $k\geq 3$, a very general abelian variety of dimension at least $2k-2$ has gonality at least $k+1$. In particular Conjecture \href{Vweakconj}{\ref{Vweakconj}} holds.\end{corollary}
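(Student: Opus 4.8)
The plan is to read the gonality bound off Theorem \ref{2k-2} using the elementary dictionary, recalled in the introduction, between low-gonality curves on a variety and low-degree positive-dimensional orbits of zero-cycles.

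First I would recall that for an abelian variety $A$ the gonality of $A$---the minimal gonality of an irreducible curve contained in $A$---coincides with $\textup{cov.gon}(A)$, since translating a single $m$-gonal curve by the points of $A$ produces a covering family of $m$-gonal curves. Now suppose, towards a contradiction, that for some $k\geq 3$ a very general abelian variety $A$ of dimension $g\geq 2k-2$ had gonality $m\leq k$. Then $A$ would contain an irreducible curve $C$ whose normalization $\pi\colon\widetilde C\to C$ admits a degree-$m$ morphism $f\colon\widetilde C\to\mathbb{P}^1$. The fibers of $f$ give a $\mathbb{P}^1$-parameter family of effective degree-$m$ zero-cycles on $\widetilde C$ that are mutually rationally equivalent on $\widetilde C$, hence, after pushforward along $\widetilde C\to A$, mutually rationally equivalent on $A$; this is a positive-dimensional orbit of degree $m$ on $A$. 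If $m<k$, then adding $k-m$ copies of $0_A$ keeps it positive-dimensional, exactly as in the argument that $\mathscr{G}(k)$ is non-decreasing, so in every case $A$ carries a positive-dimensional orbit of degree $k$. This contradicts Theorem \ref{2k-2}, which asserts $\mathscr{G}(k)\leq 2k-2$, i.e.\ that no very general abelian variety of dimension at least $2k-2$ has a positive-dimensional orbit of degree $k$. Hence the gonality of $A$ is at least $k+1$.

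For the last assertion: Conjecture \ref{Vweakconj} asks that a very general abelian variety of dimension at least $2k-1$ have gonality at least $k+1$, and for $k\geq 3$ this is immediate from the above since $2k-1\geq 2k-2$; the cases $k=1$ (abelian varieties contain no rational curve) and $k=2$ (Pirola's Theorem \ref{P}, via the same curve-to-orbit argument) are already known. There is no real obstacle here, all of the difficulty being packaged inside Theorem \ref{2k-2}; the only points needing a word of care are the identification of gonality with covering gonality for abelian varieties and the padding step upgrading a degree-$m$ orbit with $m<k$ to one of degree $k$.
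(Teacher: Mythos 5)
Your proposal is correct and is exactly the argument the paper intends (and leaves implicit, since the curve-to-orbit dictionary, the padding by copies of $0_A$, and the identification of gonality with covering gonality for abelian varieties are all spelled out in the introduction). Nothing further is needed.
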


We can use the same argument to rule out the existence of two-dimensional orbits of small degree.

\begin{theorem}\label{no2dim}
A very general abelian variety of dimension at least $2k-4$ does not have a $2$-dimensional orbit of degree $k$ for $k\geq 4$.
\end{theorem}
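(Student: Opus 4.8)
The plan is to run the proof of Theorem~\ref{2k-2} one notch higher: instead of the one-dimensional suborbit coming from a $k$-gonal curve, one starts from a two-dimensional suborbit and propagates two-dimensional suborbits through the whole degeneration. Suppose, for contradiction, that a very general abelian variety of dimension $2k-4$ carries a two-dimensional orbit of degree $k$. Exactly as at the beginning of the proof of Theorem~\ref{2k-2} (cf.\ the discussion in the introduction), a generically finite base change (Remark~\ref{convention}) together with the Chow-variety construction of Section~\ref{prelim} produces a locally complete family $\mathcal{A}/S$ of abelian $(2k-4)$-folds and an irreducible subvariety $\mathcal{Z}\subset\mathcal{A}^{k}$, flat of relative dimension $2$ over $S$, whose fibres $\mathcal{Z}_s$ are two-dimensional suborbits of $\mathcal{A}_s^{k}$; by the translation trick of Remark~\ref{trick} we may take $\mathcal{Z}\subset\mathcal{A}^{k,\,0}$. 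Since $\mathcal{Z}$ has relative dimension $2$ it satisfies condition $(*)$ for $l=2$ (the density argument recalled in the proof of Corollary~\ref{conjpf}), and being foliated by its own two-dimensional fibres it is foliated by positive-dimensional suborbits, so Lemma~\ref{*implies**} upgrades $(*)$ to $(**)$ for $l=2$.

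The next step is to feed $\mathcal{Z}$ into the degeneration--projection machinery of Sections~\ref{specialization} and~\ref{salvaging}, precisely as in Theorem~\ref{2k-2}: iterate Proposition~\ref{mainprop} with $l'=2$ and finish with Proposition~\ref{prop1}. Each step lowers the dimension of the abelian variety by one, raises the relative dimension of the subvariety by one, keeps it inside the kernel of summation and foliated by two-dimensional suborbits (the maps $p_\lambda$ being coordinatewise compositions of projections with isogenies, hence carrying the kernel of summation and suborbits into the same), and re-establishes $(*)$, hence $(**)$ by Lemma~\ref{*implies**}, for $l'=2$ --- which is exactly what the ``salvaging'' of Section~\ref{salvaging} provides. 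Starting from relative dimension $2$ over abelian $(2k-4)$-folds, after $2k-6$ steps one reaches the following situation: for a very general abelian surface $B$ there is an irreducible variety $W\subset B^{k,\,0}$ with $\dim W=2k-4$, foliated by two-dimensional suborbits, swept out by a family $D$ of two-dimensional suborbits; here $D$ is the image of the (finite) Chow classifying map from the locus $S_\lambda(B)$, so $D$ embeds in $\textup{Chow}_2(B^{k})$ and $\dim D=\dim S_\lambda(B)=\binom{2k-5}{2}$, the dimension of the moduli of the $(2k-6)$-dimensional complementary factor.

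It then remains to count. Let $\mathcal{Y}_D\to D$ be the universal family, of dimension $\binom{2k-5}{2}+2$, with evaluation map $e\colon\mathcal{Y}_D\to W$. For a general $z\in W$ the locus $D_z=\{[C]\in D:\,z\in C\}$ has dimension $\binom{2k-5}{2}+2-(2k-4)=(k-3)(2k-7)$, which is $\ge 1$ exactly because $k\ge 4$. All the surfaces parametrized by $D_z$ contain $z$, hence lie in the single orbit $|z|$, and they are pairwise distinct because $D\hookrightarrow\textup{Chow}_2(B^{k})$; therefore their union is contained in $|z|\cap W$ and, being the union of a positive-dimensional family of distinct irreducible surfaces, is at least $3$-dimensional. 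Thus $W$ is foliated by three-dimensional suborbits at its general point, and Corollary~\ref{absurfbound} (whose proof via Lemma~\ref{vanish2} uses only the foliation on a dense subset) forces $\dim W\le 2(k-1)-3=2k-5$, contradicting $\dim W=2k-4$. The identical argument with $g-2$ steps settles every $g\ge 2k-4$: one obtains $W\subset B^{k,\,0}$ of dimension $g$ swept by a $\binom{g-1}{2}$-parameter family of surfaces, with $\dim D_z\ge (g-2)(g-3)/2\ge 1$ since $g\ge 4$, and again $\dim W\le 2k-5<g$.

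The proof contains no genuinely new idea: the point requiring attention is that condition $(**)$ for $l'=2$ persists through every step, which is precisely the output of Section~\ref{salvaging}, the base case ``relative dimension $\le l'$ and foliated by positive-dimensional suborbits'' holding on the nose for a two-dimensional suborbit. The only place where the hypothesis $k\ge 4$ is genuinely used is the inequality $(k-3)(2k-7)\ge 1$, which also relies on $D$ being honestly $\binom{2k-5}{2}$-dimensional with distinct members, so that the surfaces through a general point of $W$ cannot collapse to a single surface; this range is sharp, since for $k=3$ one has $2k-4=2$ and a very general abelian surface does carry a two-dimensional orbit of degree $3$ (Example~\ref{HSL}).
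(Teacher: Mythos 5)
Your proposal is correct and follows exactly the paper's route: the paper's own proof of this theorem is just "verify $(*)$ (hence $(**)$) for the $2$-dimensional suborbit via the linear-algebra density argument of Corollary~\ref{conjpf}, then follow the proof of Theorem~\ref{2k-2}," and your write-up is a faithful expansion of that, with the correct dimension count $\binom{2k-5}{2}+2-(2k-4)=(k-3)(2k-7)\ge 1$ forcing a foliation by $3$-dimensional suborbits and a contradiction with Corollary~\ref{absurfbound}. No discrepancies to report.
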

\begin{proof}
Suppose $\mathcal{A}/S$ is a locally complete family of $(2k-4)$-dimensional abelian varieties and that $\mathcal{Z}\subset \mathcal{A}^k$ is a $2$-dimensional suborbit. Using the same argument as in the proof of Corollary \href{conjpf}{\ref{conjpf}}, we see that $\mathcal{Z}$ satisfies $(*)$, and thus $(**)$. We can therefore follow the proof of Theorem \href{2k-2}{\ref{2k-2}}.
\end{proof}

As mentioned in the introduction, our method yields stronger results if we only consider orbits of the form $|\sum_{i=1}^{k-l}\{a_i\}+l\{0_A\}|$.

\begin{theorem}\label{k+1} A very general abelian variety $A$ of dimension at least $2k+2-l$ does not have a positive dimensional orbit of the form $|\sum_{i=1}^{k-l}\{a_i\}+l\{0_A\}|$, i.e. $\mathscr{G}_{l}(k)\leq 2k+2-l.$ \\
Moreover, if $A$ is a very general abelian variety of dimension at least $k+1$, the orbit $|k\{0_A\}|$ is countable, i.e. $\mathscr{G}_{k}(k)\leq k+1.$
\end{theorem}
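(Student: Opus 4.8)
The plan is to mimic the inductive scheme used for Theorem \ref{2k-2}, but to exploit the fact that when $l$ of the points in the cycle are required to coincide (and equal $0_A$ after normalization), one gains extra room at each specialization step. Concretely, suppose $A$ is very general of dimension $g$ and admits a positive-dimensional orbit $|\sum_{i=1}^{k-l}\{a_i\}+l\{0_A\}|$. Translating as in Remark \ref{trick}, we may take this orbit to be normalized, so we obtain a locally complete family $\mathcal{A}/S$ of abelian $g$-folds together with a positive-dimensional suborbit $\mathcal{Z}\subset\mathcal{A}^{k,\,0}$ whose fibers over $S$ lie in the locus where $l$ of the coordinates are constantly equal to $0_A$ — that is, $\mathcal{Z}$ is contained in $\mathcal{A}^{k-l,\,0}\times\{0\}^l\subset\mathcal{A}^{k,\,0}$ (after translation, using the hypothesis on the form of the orbit). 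The point is that $\mathcal{Z}$ is then effectively a suborbit of degree $k-l$ for $\mathcal{A}$, sitting inside $\mathcal{A}^{k-l}$, rather than degree $k$.

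Next I would run the specialization–projection machinery of Sections \ref{specialization} and \ref{salvaging} on this smaller power $\mathcal{A}^{k-l}$. Starting from a family of curves ($d=1$), the relative-dimension bound argument from Corollary \ref{conjpf} gives condition $(*)$ for all $l'\leq g-1$ when $d=1$, and Lemma \ref{*implies**} upgrades this to $(**)$. Applying Proposition \ref{mainprop} inductively, at the $i$-th step one descends to a locally complete family of abelian $(g-i)$-folds carrying an $i$-dimensional subvariety of $(\,\cdot\,)^{k-l}$ foliated by positive-dimensional suborbits, with the constant-$0$ coordinates preserved throughout. Carrying this out until the abelian variety has dimension $2$: if $g\geq 2k+2-l$, i.e. $g-2\geq 2(k-l-1)$, then after $g-2$ steps we obtain a $(g-2)$-dimensional subvariety $Z\subset B^{k-l,\,0}$ foliated by positive-dimensional suborbits on a very general abelian surface $B$. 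But $\dim B^{k-l,\,0}=2(k-l)-2=2(k-l-1)$, and foliation by $1$-dimensional suborbits forces, by Corollary \ref{normabsurfbound}, $\dim Z\leq 2(k-l-1)-1<2(k-l-1)$; since the iteration does reach relative dimension $g-2=2(k-l-1)$ one gets the contradiction provided $g-2\geq 2(k-l-1)$. This yields $\mathscr{G}_l(k)\leq 2k+2-l$, the slack of one extra dimension relative to Theorem \ref{2k-2} coming precisely from replacing $k$ by $k-l$ in the degree while the ambient surface bound $2(k-1)-d$ of Corollary \ref{normabsurfbound} is replaced by the degree-$(k-l)$ version.

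For the second assertion, $\mathscr{G}_k(k)\leq k+1$: here all $k$ points are required to be $0_A$, so a positive-dimensional orbit $|k\{0_A\}|$ is literally a positive-dimensional constant-cycle locus, and the associated suborbit lives in $\mathcal{A}^{0}$ — informally, one is asking for a positive-dimensional family of zero-cycles rationally equivalent to $k\{0_A\}$, which by Roĭtman's theorem forces a corresponding positive-dimensional orbit structure one dimension lower each time we project. Running the same induction, if $\dim A\geq k+1$ one descends to an abelian surface $B$ carrying a positive-dimensional $Z\subset B^{k,\,0}$ with all points forced into the single rational-equivalence class $k\{0_B\}$, foliated by positive-dimensional suborbits of dimension $\dim Z$ itself; applying Corollary \ref{normabsurfbound} with $d=\dim Z$ gives $\dim Z\leq 2(k-1)-\dim Z$, hence $\dim Z\leq k-1$, while the iteration produces $\dim Z=\dim A-2\geq k-1$ only in the boundary case — a more careful count (using that the constant-$0$ constraint removes a further dimension at the base, as in the $\mathscr{G}_l$ estimate) shows $\dim A\geq k+1$ already forces $\dim Z=0$, i.e. $|k\{0_A\}|$ is countable.

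\textbf{Main obstacle.} The delicate point is bookkeeping the "constant coordinates" through the specialization–projection steps: one must check that when $\mathcal{A}_s\sim D\times F\times E_s$ and we project $\mathcal{A}_s^k\to D^k$, the coordinates that were identically $0_A$ remain identically $0_D$, so that the descended family genuinely continues to live in the degree-$(k-l)$ (resp. the "all points $0$") stratum, and that the non-degeneracy conditions $(*)$/$(**)$ and the relative-dimension count of Proposition \ref{mainprop} still hold for the restricted family $\mathcal{Z}\subset\mathcal{A}^{k-l}$ rather than $\mathcal{A}^k$. This is routine once one notes that the embedding $\mathcal{A}^{k-l}\hookrightarrow\mathcal{A}^k$ fixing the last $l$ coordinates at $0$ is equivariant for all the projections $p_\lambda$, $p_{\lambda,\mu}$, but it requires stating Proposition \ref{prop1} and Proposition \ref{mainprop} for arbitrary $k$ (which the excerpt does) and re-indexing carefully.
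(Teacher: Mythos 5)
Your reduction at the very first step is not valid, and it is the crux of the matter. A positive dimensional orbit ``of the form $|\sum_{i=1}^{k-l}\{a_i\}+l\{0_A\}|$'' means that the \emph{full fiber} of $A^k\to CH_0(A)$ over the class $\sum_{i=1}^{k-l}\{a_i\}+l\{0_A\}$ is positive dimensional; it does not mean that this fiber meets the stratum $A^{k-l}\times\{0_A\}^l$ in a positive dimensional set. (Think of the motivating example: a $g^1_k$ with one totally ramified fiber gives a $\mathbb{P}^1$ of rationally equivalent divisors, exactly one of which has the special shape.) So you cannot take $\mathcal{Z}\subset\mathcal{A}^{k-l,\,0}\times\{0\}^l$ and treat it as a suborbit of degree $k-l$. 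If that reduction were legitimate, Theorem~\ref{2k-2} applied in degree $k-l$ would immediately give $\mathscr{G}_l(k)\leq 2(k-l)-2$, which is far stronger than the claimed $2k+2-l$ --- a sign that something has been given away for free. Your dimension count also betrays the problem: with $g=2k+2-l$ the iteration produces relative dimension $g-2=2k-l$, which you equate with $2(k-l-1)=2k-2l-2$; these agree for no $l\geq 0$, and indeed $2k-l$ exceeds $\dim B^{k-l,\,0}=2(k-l)-2$, so the descended variety cannot live where you place it.

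The paper's proof keeps $\mathcal{Z}$ as a genuine one-dimensional suborbit of degree $k$ in $\mathcal{A}^k$, recording only that each fiber $\mathcal{Z}_s$ \emph{meets} $\mathcal{A}_s^{k-l}\times\{0_{\mathcal{A}_s}\}^l$. It then runs the inductive specialization--projection (Proposition~\ref{mainprop}) down to an abelian surface $B$, obtaining $(\varphi_{\mathcal{B}^\lambda}\circ p_\lambda)(\mathcal{Z}_{S_\lambda(B)})\subset B^k$ of dimension $2k+1-l$. The special shape of the orbit is used only now: the points $\underline{b}=(b_1,\ldots,b_{k-l},0_B,\ldots,0_B)$ sweep out a locus of dimension only $2(k-l)$, and for fixed $\underline{b}$ the union $\bigcup_{s\in S_\lambda(B,\underline{b})}(\varphi_{\mathcal{B}^\lambda}\circ p_\lambda)(\mathcal{Z}_s)$ is a single suborbit because all these curves pass through $\underline{b}$ and hence lie in one fiber of $B^k\to CH_0(B)$. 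Thus the $(2k+1-l)$-dimensional image is foliated by suborbits of codimension at most $2(k-l)$, i.e.\ of dimension at least $l+1$, contradicting Corollary~\ref{absurfbound}. Your proposal is missing this mechanism entirely, and the same misconception (``the orbit $|k\{0_A\}|$ lives in $\mathcal{A}^0$'') undermines your sketch of the second assertion; the orbit $|k\{0_A\}|$ is a subset of $A^k$ and can a priori be positive dimensional without any coordinate being constant.
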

\begin{proof}
By the results of \cite{V}, it suffices to show that a very general abelian variety of dimension $2k+2-l$ has no positive dimensional orbits of the form $|\sum_{i=1}^{k-l}\{a_i\}+l\{0_A\}|$. If this were not the case, we could find $\mathcal{A}/S$, a locally complete family of $(2k+2-l)$-dimensional abelian varieties, and $\mathcal{Z}\subset \mathcal{A}^k$, a one-dimensional suborbit such that
$$\mathcal{A}_s^{k-l}\times \{0_{\mathcal{A}_s}\}^l\cap \mathcal{Z}_s\neq \emptyset,\qquad \forall s\in S.$$
By Proposition \href{mainprop}{\ref{mainprop}}, there is a $\lambda\in \Lambda_2$ such that $(\varphi_{\mathcal{B}^\lambda}\circ p_{\lambda})(\mathcal{Z}_{S_\lambda})$ has relative dimension $2k+1-l$. Given $B$ in the family $\mathcal{B}^\lambda$ and $\underline{b}=(b_1,\ldots, b_{k-l},0_{B},\ldots, 0_{B})
\in B^{k}$, consider
$$S_{\lambda}(B,\underline{b}):=\{s\in S_\lambda(B): \underline{b}\in (\varphi_{\mathcal{B}^\lambda}\circ p_{\lambda})(\mathcal{Z}_{s})\}.$$
Clearly, $(\varphi_{\mathcal{B}^\lambda}\circ p_{\lambda})(\mathcal{Z}_{S_\lambda(B,\underline{b})})$ is a suborbit. In particular, $(\varphi_{\mathcal{B}^\lambda}\circ p_{\lambda})(\mathcal{Z}_{S_\lambda(B)})$ is foliated by suborbits of codimension at most $2(k-l)$. This contradicts Corollary \href{absurfbound}{\ref{absurfbound}}. A similar argument shows that $\mathscr{G}_k(k)\leq k+1$.
\end{proof}

\vspace{1em}
\section{Further observations \& results}\label{further}
\vspace{0.5em}
We have seen how the minimal degree of a positive dimensional orbit gives a lower bound on the gonality of a smooth projective variety and used this to provide a new lower bound on the gonality of very general abelian varieties. In this section, we show how one can use results about the maximal dimension of orbits for rational equivalence of small degree in order to give lower bounds on other measures of irrationality for very general abelian varieties. We finish by discussing another conjecture of Voisin from \cite{V} and its implication for the gonality of very general abelian varieties.

\subsection{Measures of irrationality} Recall the definitions of some of the measures of irrationality for irreducible $n$-dimensional projective varieties:
\begin{align*}
\text{irr}(X)&:=\min \left\{\delta \mid \exists \text{ a degree } \delta\text{ rational covering } X\dashrightarrow \mathbb{P}^n\right\},\\
\text{cov.gon}(X)&:= \min \begin{cases}\delta\;\bigg|\begin{rcases} \text{for a generic point }
x\in X \text{, there is a curve}\\ C\subset X
\text{ such that } x\in C \text{ and } \text{gon}(C)=\delta\end{rcases}.\end{cases}
\end{align*}
We will also consider the following measure of irrationality which interpolates between the \textit{degree of irrationality} $\text{irr}(X)=\text{irr}_n(X)$ and the \textit{covering gonality} $\text{gon}(X)=\text{irr}_1(X)$:
$$\text{irr}_d(X):=\min \begin{cases}\delta\;\bigg|\begin{rcases}\text{for a general point }
x\in X \text{, there is a }d\text{-dimensional}\\\text{subvariety } Z\subset X
\text{ such that } x\in Z \text{ and } \text{irr}(Z)=\delta\end{rcases}.\end{cases}$$

Applying the methods of the previous section, we obtain the following:

\begin{corollary}
If $A$ is a very general abelian variety of dimension at least $2k-4$ and $k\geq 4$, then
$$\text{\textup{irr}}_2(A)\geq k+1.$$
\end{corollary}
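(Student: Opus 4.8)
The argument mirrors the proof of Corollary~\ref{sommese}, with a general $2$-dimensional subvariety playing the role taken there by $A$ itself. Suppose for contradiction that a very general abelian variety $A$ of dimension $g\geq 2k-4$ satisfies $\text{irr}_2(A)\leq k$. Then there is an integer $\delta\leq k$ such that through a general point of $A$ there passes a surface $Z\subset A$ with $\text{irr}(Z)=\delta$. Choosing a resolution and composing with the inclusion $Z\hookrightarrow A$, we obtain a smooth projective surface $\widetilde Z$, a generically finite (hence generically injective) morphism $\nu\colon\widetilde Z\to A$ with image $Z$, and a dominant morphism $g\colon\widetilde Z\to\mathbb P^2$ of degree $\delta$.

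The key observation is that pulling back rational equivalence of zero-cycles along $g$ produces a $2$-dimensional suborbit for $A$. Since any two points of $\mathbb P^2$ are rationally equivalent, $g^*(p)=g^*(q)$ in $CH_0(\widetilde Z)$ for all $p,q$; for general $p$ the cycle $g^*(p)$ is a sum of $\delta$ distinct reduced points whose images under $\nu$ are distinct. Hence $p\mapsto \nu_*g^*(p)$ is a family of effective degree-$\delta$ zero-cycles on $A$ all contained in a single fiber of $\text{Sym}^\delta A\to CH_0(A)$. As distinct general fibers of $g$ are disjoint in $\widetilde Z$, the induced rational map $\mathbb P^2\dashrightarrow\text{Sym}^\delta A$ is generically injective, so its image is a $2$-dimensional suborbit of degree $\delta$ meeting a general point of $A$. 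Adjoining $k-\delta$ copies of $\{0_A\}$ to each member of this family yields a $2$-dimensional suborbit of degree $k$ for $A$.

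Since $A$ is very general, the locus of abelian $g$-folds admitting such a surface is not contained in a countable union of proper closed subvarieties; after a generically finite base change we therefore obtain a locally complete family $\mathcal A/S$ of abelian $g$-folds together with a subvariety $\mathcal Z\subset\mathcal A^k$ with irreducible fibers of relative dimension $2$ over $S$ that is a suborbit of degree $k$. Because a suborbit of relative dimension $2$ satisfies condition $(*)$, hence $(**)$, for every $l\geq 2$ — by the argument of Corollary~\ref{conjpf} — we may apply Proposition~\ref{mainprop} inductively and invoke Corollary~\ref{absurfbound} exactly as in the proof of Theorem~\ref{no2dim} to reach a contradiction; here we use $g\geq 2k-4$ and $k\geq 4$. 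The delicate point is this spreading-out step together with the verification of the non-degeneracy hypotheses; these run exactly as in Theorem~\ref{no2dim}, and the genuinely new ingredient is the pullback construction of the second paragraph. We conclude that $\text{irr}_2(A)\geq k+1$.
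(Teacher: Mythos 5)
Your proposal is correct and follows essentially the same route as the paper: the paper's proof simply notes that a surface of degree of irrationality at most $k$ yields a $2$-dimensional suborbit of degree $k$ and then cites Theorem \ref{no2dim}, whereas you spell out that construction (pulling back points of $\mathbb{P}^2$, pushing forward to $A$, padding with copies of $\{0_A\}$) and re-run the proof of Theorem \ref{no2dim} instead of just invoking it. The extra details you supply (generic injectivity of $\mathbb{P}^2\dashrightarrow \mathrm{Sym}^\delta A$, the spreading-out over a locally complete family, and the verification of $(*)$ via Corollary \ref{conjpf}) are all consistent with the paper's machinery.
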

\begin{proof}
A surface with degree of irrationality $k$ in a smooth projective variety $X$ provides a $2$-dimensional suborbit of degree $k$. The result then follows from Theorem \href{no2dim}{\ref{no2dim}}.
\end{proof}
Similarly, we can use upper bounds on the dimension of suborbits of degree $k$ to obtain lower bounds on the degree of irrationality of abelian varieties. To our knowledge, the best bound currently in the literature is 
$$\text{irr}(A)\geq \dim A+1,$$
for any abelian variety $A$. This result is due to Alzati and Pirola and is a consequence of their study of the behavior of the holomorphic length under dominant rational maps (see \cite{AP2}). It is an interesting fact that this bound also follows easily from Voisin's Theorem \href{k-1}{\ref{k-1}}. Indeed, a dominant rational map from $A$ to $\mathbb{P}^{\dim A}$ of degree at most $\dim A$ provides a $(\dim A)$-dimensional suborbit of degree $\dim A$. Note that Yoshihara and Tokunaga-Yoshihara (\cite{Y},\cite{TY}) give examples of abelian surfaces $A$ with $\text{irr}(A)=3$, so that the Alzati-Pirola bound is sharp for $\dim A=2$.\\

Our results allow us to show that the Alzati-Pirola bound is not optimal, at least for very general abelian varieties.
\begin{theorem}\label{no(k-1)}
Orbits of degree $k$ on a very general abelian variety of dimension at least $k-1$ have dimension at most $k-2$, for $k\geq 4$.
\end{theorem}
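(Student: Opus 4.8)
The plan is to argue by contradiction, parallel to the proof of Theorem \ref{2k-2}. Since Voisin's Theorem \ref{k-1} already bounds the dimension of an orbit of degree $k$ by $k-1$, what must be excluded is an orbit of dimension \emph{exactly} $k-1$. So suppose a very general abelian variety of dimension $g\geq k-1$ carries one. Then one gets a locally complete family $\mathcal{A}/S$ of abelian $g$-folds and, after a generically finite base change, a subvariety $\mathcal{Z}\subset\mathcal{A}^k$ with irreducible $(k-1)$-dimensional fibers over $S$ such that each $\mathcal{Z}_s$ is an orbit, hence a suborbit, of $\mathcal{A}_s$. The first move I would make is to translate, using Remark \ref{trick}, so that $\mathcal{Z}\subset\mathcal{A}^{k,\,0}$ is normalized. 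This normalization is the decisive point, and the essential difference from the proof of Corollary \ref{conjpf}: an un-normalized $(k-1)$-dimensional suborbit satisfies condition $(*)$ only for $l\geq k-1$, which is unavailable when $g=k-1$; but for a normalized $\mathcal{Z}$, a generic projection $\mathcal{A}_s^k\to(\mathcal{B}_s^\lambda)^k$ carries $\mathcal{Z}_s$ into $(\mathcal{B}_s^\lambda)^{k,\,0}$, of dimension $(k-1)\dim\mathcal{B}_s^\lambda\geq k-1=\dim\mathcal{Z}_s$, so (arguing genericity exactly as in the proof of Corollary \ref{conjpf}, via the density statement behind Lemma \ref{dense}) that restriction is generically finite on its image. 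Hence $\mathcal{Z}$ satisfies $(*)$ for every $l$ with $2\leq l\leq g-1$, and so also $(**)$, by Lemma \ref{*implies**}.

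Next I would reduce to the case $g=k-1$. If $g>k-1$, then $(**)$ holds for $l=k-1$ and $k-1\leq g-1$, so along a locus $S_\lambda$ where $\mathcal{A}_s\sim\mathcal{B}_s^\lambda\times\mathcal{E}_s^\lambda$ with $\dim\mathcal{B}_s^\lambda=k-1$, the projection $p_\lambda$ is generically finite on $\mathcal{Z}_s$ for $s$ in a nonempty open subset of $S_\lambda$; since $\mathcal{B}^\lambda/S_\lambda$ is locally complete, the images $p_\lambda(\mathcal{Z}_s)\subset(\mathcal{B}_s^\lambda)^{k,\,0}$ then furnish a $(k-1)$-dimensional normalized suborbit of degree $k$ on a very general abelian $(k-1)$-fold. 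So from now on $g=k-1$.

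With $g=k-1$, I would follow the proof of Theorem \ref{2k-2}: apply Proposition \ref{mainprop} with $l'=2$ repeatedly --- using Lemma \ref{*implies**} to recover $(**)$ from $(*)$ at each stage --- descending through locally complete families of abelian varieties of dimensions $k-1,k-2,\dots,3$, the relative dimension of the successive normalized suborbits increasing by one at each step, and then make the final descent from abelian $3$-folds to abelian surfaces by invoking Proposition \ref{prop1} directly with $l=2$ (Proposition \ref{mainprop} requires $l'<g-1$ and so cannot perform that last step). This produces a locally complete family $\mathcal{B}/T$ of abelian surfaces and a normalized $\mathcal{W}\subset\mathcal{B}^{k,\,0}$ whose fiber over a very general surface $B$ is $\mathcal{W}_{[B]}=\bigcup_s p(\mathcal{Z}_s)\subset B^{k,\,0}$, where $p$ denotes the composite of all the specializations and projections. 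Since each $\mathcal{Z}_s$ is $(k-1)$-dimensional and $p$ is generically finite on it, every $p(\mathcal{Z}_s)$ is a $(k-1)$-dimensional suborbit, so $\mathcal{W}_{[B]}$ is foliated by $(k-1)$-dimensional suborbits; and tracking the relative dimensions through the descent, together with the variation of the projected suborbits with the parameter given by Proposition \ref{prop1} (exactly as in Theorem \ref{2k-2}), one finds $\dim\mathcal{W}_{[B]}=2k-4$. But Corollary \ref{absurfbound} forces a subvariety of $B^{k,\,0}$ foliated by $(k-1)$-dimensional suborbits to have dimension at most $2(k-1)-(k-1)=k-1$, and $2k-4>k-1$ precisely when $k\geq 4$ --- a contradiction.

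The hardest part will be to see that condition $(**)$ genuinely survives every application of Proposition \ref{mainprop}; this is why the reduction to $g=k-1$ is carried out first, so that at most $k-3$ descent steps are needed and the relative dimension of the successive normalized suborbits never exceeds the dimension $m(k-1)$ of the ambient $(\cdot)^{k,\,0}$ at each stage (a family of abelian $m$-folds). The remaining numerical bookkeeping --- the relative dimensions at each stage, the dimensions of the strata $S_\lambda$ and $S_\lambda(B)$, and the resulting dimension of $\mathcal{W}_{[B]}$ --- is routine and runs parallel to the corresponding computation in the proof of Theorem \ref{2k-2}.
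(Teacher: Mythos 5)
Your overall architecture (normalize, reduce to $g=k-1$, then specialize-and-project) is reasonable, and the reduction to $g=k-1$ is fine, but there is a genuine gap at the step you identify as "the decisive point." You claim that because $\mathcal{Z}$ is normalized and $\dim(\mathcal{B}_s^\lambda)^{k,\,0}=(k-1)\dim\mathcal{B}_s^\lambda\geq k-1=\dim\mathcal{Z}_s$, the restriction of $p_\lambda$ to $\mathcal{Z}_s$ is generically finite for generic $\lambda$. This dimension count does not establish condition $(*)$. The projections $p_\lambda$ are not generic linear projections of $\mathcal{A}_s^{k,\,0}$: they are $k$-fold products of a single projection $\mathcal{A}_s\to\mathcal{B}_s^\lambda$, with kernel $(\ker p_{\lambda,s})^k$, and the genericity available (Lemma \ref{dense}) lives in $\mathrm{Gr}(g-l,T_{\mathcal{A}_s})$, not in a Grassmannian of subspaces of $T_{\mathcal{A}_s}^{k,\,0}$. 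The vector-space lemma behind Corollary \ref{conjpf} therefore requires $\dim\mathcal{Z}_s<\dim\mathcal{A}_s$, which fails exactly when $g=k-1$. Concretely, if a tangent space $T_{\mathcal{Z}_s,z}$ is of the diagonal form $(T_{\mathcal{A}_s})_M$ with $M\in\C^{k,\,0}$, then \emph{every} product projection kills a positive-dimensional subspace of it, no matter how large the target $(\mathcal{B}_s^\lambda)^{k,\,0}$ is; normalization does not help, since $(T_{\mathcal{A}_s})_M\subset T_{\mathcal{A}_s}^{k,\,0}$ whenever $\sum_i m_i=0$.

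Ruling out this diagonal degeneration is precisely the hard content of the paper's proof: one shows that failure of $(*)$ forces $T_{\mathcal{Z}_s,z}=(T_{\mathcal{A}_s})_M$ at every smooth point, producing a map $(\mathcal{Z}_s)_{\mathrm{sm}}\to\mathbb{P}(\C^{k,\,0})$; simplicity of $\mathcal{A}_s$ plus Lemma \ref{absub} imply $\mathcal{Z}_s$ is not stabilized by an abelian subvariety, so its Gauss map is generically finite, hence the image contains an open set and therefore a real point $M\in\mathbb{P}(\mathbb{R}^{k,\,0})$; but $(T_{\mathcal{A}_s})_M$ is then not isotropic for $\omega_k$, contradicting Proposition \ref{vanish}. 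None of this appears in your proposal. Separately, your endgame is more laborious than necessary: once $(*)$ (hence $(**)$) is in hand, a \emph{single} application of Proposition \ref{prop1} with $l=k-2$ produces a $k$-dimensional subvariety of $B^{k,\,0}$ foliated by $(k-1)$-dimensional suborbits, i.e.\ a one-parameter family of $(k-1)$-dimensional normalized suborbits, which Corollary \ref{1dimfam} already forbids; there is no need to descend to abelian surfaces and re-verify $(**)$ at each of $k-3$ further steps.
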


\begin{corollary}\label{sommese}
If $A$ is a very general abelian variety of dimension $g\geq 3$, then
$$\text{\textup{irr}}(A)\geq g+2.$$
\end{corollary}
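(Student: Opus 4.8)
The plan is to argue by contradiction, extracting a large orbit from a low-degree rational map to projective space exactly as outlined in the paragraph preceding Theorem \ref{no(k-1)}.

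First I would reduce to the borderline case. By the Alzati--Pirola bound $\textup{irr}(A)\geq g+1$ (which, as remarked above, also follows from Voisin's Theorem \ref{k-1} applied to the orbit constructed below), it suffices to rule out $\textup{irr}(A)=g+1$; note also that $g\geq 3$ forces $\textup{irr}(A)\geq g+1\geq 4$. So suppose $f\colon A\dashrightarrow \mathbb{P}^g$ is a dominant rational map of degree $g+1$.

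Next I would build an orbit out of $f$. Choose a resolution of indeterminacy $\pi\colon \widetilde{A}\to A$ with $\tilde f=f\circ\pi\colon \widetilde A\to \mathbb{P}^g$ a morphism; for a general $p\in\mathbb{P}^g$ the fiber $\tilde f^{-1}(p)$ is a reduced set of $g+1$ points disjoint from the exceptional locus, so $\pi_*[\tilde f^{-1}(p)]=[f^{-1}(p)]$. Since any two points of $\mathbb{P}^g$ are rationally equivalent, the cycles $\tilde f^{-1}(p)$ are all rationally equivalent on $\widetilde A$, and pushing forward, the zero-cycles $f^{-1}(p)$ are all rationally equivalent in $CH_0(A)$. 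The assignment $p\mapsto f^{-1}(p)$ defines a rational map $\mathbb{P}^g\dashrightarrow \Sym^{g+1}A$ which is generically injective, since distinct points of $\mathbb{P}^g$ have disjoint $f$-fibers and hence distinct supports; thus its image has dimension $g$. Lifting this image to $A^{g+1}$ along the finite map $A^{g+1}\to\Sym^{g+1}A$ yields an irreducible $g$-dimensional subvariety $Z\subset A^{g+1}$. By Ro\u{\i}tman's description of the fibers of $A^{g+1}\to CH_0(A)$ as countable unions of Zariski closed sets (see \cite{R1}), the fact that a dense open subset of $Z$ maps to a single point of $CH_0(A)$ forces $Z$ to lie in a single such fiber; that is, $Z$ is contained in an orbit of degree $k:=g+1$.

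Finally, since $g\geq 3$ we have $k=g+1\geq 4$ and $\dim A=g=k-1$, so Theorem \ref{no(k-1)} applies to the very general $A$ and bounds the dimension of every degree-$k$ orbit by $k-2=g-1<g=\dim Z$, a contradiction. Hence $\textup{irr}(A)\geq g+2$. The only substantial ingredient is Theorem \ref{no(k-1)}, whose hypothesis $\dim A\geq k-1$ is met precisely because we first reduced to $\textup{irr}(A)=g+1$; the generic injectivity of $p\mapsto f^{-1}(p)$ and the rational equivalence of the fibers after resolving the indeterminacy of $f$ are routine.
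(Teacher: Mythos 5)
Your proof is correct and follows exactly the route the paper intends (the paper states the corollary without proof, relying on the remark that a degree-$\delta$ dominant rational map $A\dashrightarrow\mathbb{P}^{\dim A}$ yields a $(\dim A)$-dimensional suborbit of degree $\delta$, combined with Theorem \ref{no(k-1)}). Your reduction to the borderline case $\textup{irr}(A)=g+1$ via Alzati--Pirola, so that $k=g+1\geq 4$ and $\dim A=k-1$ meet the hypotheses of Theorem \ref{no(k-1)}, and your careful construction of the $g$-dimensional suborbit are precisely the details the paper leaves implicit.
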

\begin{proof}[Proof of Theorem \href{no(k-1)}{\ref{no(k-1)}}]
Consider $\mathcal{A}/S$, a locally complete family of abelian $g$-folds, and $\mathcal{Z}\subset \mathcal{A}^{k,\, 0}/S$, a $(k-1)$-dimensional suborbit. We claim that $\mathcal{Z}$ satisfies $(*)$ for $l=g-1$ if $g\geq k-1$. This is obvious if $g>k-1$ and will be shown below for $g=k-1$. Assuming this, by Lemma \ref{*implies**} and Proposition \ref{prop1}, for appropriate $\lambda\in \Lambda_{(k-2)}$ and $B$ in the family $\mathcal{B}^\lambda$, the subvariety $(\varphi_{\mathcal{B}^\lambda}\circ p_{\lambda})(\mathcal{Z}_{S_\lambda(B)})\subset B^{k,\, 0}$ is $k$-dimensional and foliated by $(k-1)$-dimensional suborbits. This contradicts Corollary \href{1dimfam}{\ref{1dimfam}}.\\

We now assume that $g=k-1$. To show that $\mathcal{Z}$ satisfies $(*)$ for $l=k-2$, we will need the following easy lemma which we give without proof:
\begin{lemma}
Consider a $g$-dimensional vector space $V,$ a positive integer $r$, and a $g$-dimensional subspace $W\subset V^r$. If the restriction of the projection $\pi_L: V^r\to (V/L)^r$ to $W$ is not an isomorphism for any $L\in \mathbb{P}(V)$, then $W=V_M$ for some $M\in \C^r$.
\end{lemma}

Suppose $\mathcal{Z}$ fails to satisfy $(*)$ for $l=k-2$. Then, for any $s\in S$ and $z\in (\mathcal{Z}_s)_{\text{sm}}$, the tangent space $T_{\mathcal{Z}_s,z}$ must be of the form $(T_{\mathcal{A}_s})_M\subset T_{\mathcal{A}_s}^{k,\, 0}$ for some $M\in \C^{k,\, 0}:=\{(x_1,\ldots, x_n)\in \C^n: \sum_{i=1}^n x_i=0\}$. Thus, for each $s\in S$, we get a morphism $(\mathcal{Z}_s)_{\text{sm}}\to \mathbb{P}(\C^{k,\, 0})$. For very general $s\in S$, the abelian variety $\mathcal{A}_s$ is simple and so Lemma \ref{absub} implies that $\mathcal{Z}_s$ cannot be stabilized by an abelian subvariety of $\mathcal{A}_s^k$. Hence, the Gauss map of $\mathcal{Z}_s$ is generically finite on its image and so is the morphism $(\mathcal{Z}_s)_{\text{sm}}\to \mathbb{P}(\C^{k,\, 0})$. It follows that the image of this morphism must contain an open in $\mathbb{P}(\C^{k,\, 0})$ and so a point $M\in \mathbb{P}(\mathbb{R}^{k,\, 0})$. Since the subspace $(T_{\mathcal{A}_s})_M\subset T_{\mathcal{A}_s}^{k,\, 0}$ is not totally isotropic for $\omega_k$ for any non-zero $\omega\in H^0(\mathcal{A}_s,\Omega^2)$, this provides the desired contradiction.
\end{proof}

Corollary \ref{sommese} is improved in the preprint \cite{CMNP} written in collaboration with E. Colombo, J. C. Naranjo, and G. P. Pirola. For a discussion of the degree of irrationality of abelian surfaces see the preprints \cite{C} and \cite{Martin}. In particular, in \cite{Martin} Corollary \ref{sommese} is extended to very general $(1,d)$-polarized abelian surfaces for $d\nmid 6$.\\

\subsection{Existence results for positive dimensional orbits}
When seeking lower bounds on measures of irrationality one is led to rule out existence of large dimensional orbits of small degree. Since the study of orbits for rational equivalence is interesting in its own right, one could instead seek existence results for subvarieties of $A^k$ foliated by $d$-dimensional suborbits. Alzati and Pirola show in Examples 5.2 and 5.3 of \cite{AP} that any abelian surface $A$ admits a $2$-dimensional suborbit of degree $3$ and a threefold in $A^{3,\, 0}$ foliated by suborbits of positive dimension. In particular, using the argument from Remark \href{trick}{\ref{trick}}, we see that Corollary \href{normabsurfbound}{\ref{normabsurfbound}} is sharp for $d=0,1,2$.

\begin{example}\label{HSL}
In \cite{HSL} Lin shows that Corollary \href{normabsurfbound}{\ref{normabsurfbound}} is sharp for every $d$. 
\end{example}

The methods of \cite{HSL} can be used to show the following:
\begin{proposition} \label{curvecase}
Let $A$ be an abelian $g$-fold which is a quotient of the Jacobian of a smooth genus $g'$ curve $C$. For any $k\geq g'+d-1$, the variety $A^{k,\, 0}$ contains a $(g(k+1-g'-d)+d)$-dimensional subvariety foliated by $d$-dimensional suborbits.
\end{proposition}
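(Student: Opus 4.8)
The plan is to adapt Lin's construction from \cite{HSL}. The subvariety of $A^{k,\,0}$ will be assembled from four kinds of moduli: divisors on $C$ moving in a linear system, which supply the $d$ ``suborbit directions''; translates of $\phi(C)\subset A$, which supply $g$ further directions; freely moving points of $A$, which supply $g(m-2)$ directions, where $m:=k+1-g'-d$; and one point pinned down by the normalization condition $\sum(\,\cdot\,)=0$.

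I would begin by writing $\nu\colon J(C)\to A$ for the quotient and $\phi\colon C\to A$ for the Abel--Jacobi map followed by $\nu$, so that $\phi$ is finite onto its image and $\phi(C)$ generates $A$. The two facts to keep in mind are that $\phi_*$ preserves rational equivalence and that each translation $t_a$ acts on $CH_0(A)$ as an automorphism, whence for effective divisors $D\sim D'$ of the same degree on $C$ and any $a\in A$ one has $\sum_{p\in D}\{\phi(p)+a\}=\sum_{p\in D'}\{\phi(p)+a\}$ in $CH_0(A)$. Riemann--Roch gives $h^0(L)\ge d+1$ for every $L\in\operatorname{Pic}^{g'+d}C$, with equality generically, so $\operatorname{Sym}^{g'+d}C\to\operatorname{Pic}^{g'+d}C$ has all fibers of dimension $\ge d$ and general fiber $\mathbb{P}^d$. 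Assuming $m\ge 2$, I would fix a $g$-dimensional subvariety $\mathcal{L}\subset\operatorname{Pic}^{g'+d}C$ (all of it when $g=g'$, a general one otherwise), let $\mathcal{W}\subset\operatorname{Sym}^{g'+d}C$ be its preimage (so $\dim\mathcal{W}=g+d$), pass to the ordered cover $\widetilde{\mathcal{W}}$ of $\mathcal{W}$, and define $\Phi\colon A\times\widetilde{\mathcal{W}}\times A^{m-2}\dashrightarrow A^{k}$ by $(a,(q_i)_i,(y_j)_j)\mapsto(\phi(q_1)+a,\dots,\phi(q_{g'+d})+a,y_1,\dots,y_{m-2},y_k)$, where $y_k$ is forced by normalization; the image $Z$ lies in $A^{k,\,0}$.

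The two things left to check are the dimension and the foliation. For the foliation: fixing $a$, the $y_j$, and a class $L\in\mathcal{L}$, and letting $D$ run over $|L|\subset\mathcal{W}$, the partial sum $\sum_i\phi(q_i)$ and hence $y_k$ stay fixed while the first $g'+d$ coordinates sweep out a family that, by the displayed identity, lies in a single fiber of $A^k\to CH_0(A)$; these suborbits have dimension $\ge d$ and cover $\operatorname{im}\Phi$, and a routine argument as in \cite{HSL} extends the conclusion to $Z$. For the dimension I would show $\Phi$ is generically finite onto its image: the $y_j$ are read off the image directly, and since $\phi(C)$ generates $A$ it is, for $g\ge 2$, not invariant under translation by any positive-dimensional subgroup, so $a$ and then $(q_i)$ are recovered up to finitely many choices; this yields $\dim Z=g+(g+d)+g(m-2)=g(k+1-g'-d)+d$.

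I expect the main obstacle to be the low-degree boundary, where the uniform construction breaks down. When $k=g'+d$ one simply drops the free points and lets $a$ depend on the class of $D$ through $(g'+d)a=-\nu(\operatorname{cl}(D)-(g'+d)p_0)$, still obtaining $\dim Z=\dim\mathcal{W}=g+d=g(k+1-g'-d)+d$. When $k=g'+d-1$ the target is a single $d$-dimensional suborbit: for $d\le g'-1$ this comes from a line bundle $L$ of degree $g'+d-1$ with $h^1(L)\ge 1$ (such $L$ exist because $K_C-L$ is then effective of degree $g'-d-1\ge0$), taking a $\mathbb{P}^d$ inside $|L|$ and normalizing by a suitable translate; but for $d\ge g'$ no linear system on $C$ is large enough, and here one must fall back on the explicit constructions of \cite{HSL} — for $g=g'=2$ this case is precisely the sharpness of Corollary \ref{normabsurfbound}. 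Finally, $g=1$ is handled trivially, since then $A^{k,\,0}$ is itself a single $(k-1)$-dimensional suborbit, so one only needs $g'\ge 2$ to fit a subvariety of the prescribed dimension inside it.
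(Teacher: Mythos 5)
Your construction for $k\ge g'+d$ is sound and takes a genuinely different route from the paper's: the paper never builds the full-dimensional subvariety directly, but instead produces a single $d$-dimensional suborbit in $A^{g'+d-1,\,0}$ and then inflates it by iterating the translation trick of Remark \ref{trick}, gaining $g$ dimensions per unit of degree until it reaches $g(k+1-g'-d)+d$. The real problem is that your proposal has a genuine gap precisely at the case your own analysis isolates: $k=g'+d-1$ with $d\ge g'$. There you ``fall back on the explicit constructions of \cite{HSL}'', but \cite{HSL} only concerns abelian surfaces (it is the sharpness of Corollary \ref{normabsurfbound}), so for instance $g=g'=3$, $d=3$, $k=5$ --- a $3$-dimensional suborbit in $J(C)^{5,\,0}$ for a genus-$3$ curve, where every $|L|$ with $\deg L=5$ has dimension at most $2$ --- is left unproved. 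Since the paper's entire argument reduces to this boundary case, the missing step is the heart of the matter, not a peripheral edge case.

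The idea you are missing is to let the translation point itself move along $C$ and to let the normalization condition couple it to the divisor class, rather than fixing a translate $a$ and relying on a single linear system of dimension $d$. Concretely, the paper takes $\psi\colon C\times C^{g'+d-1}\to A^{g'+d-1}$, $(c_0,(c_i))\mapsto (c_i-c_0)$, which is generically finite onto its $(g'+d)$-dimensional image; intersecting with $A^{g'+d-1,\,0}$ cuts the dimension by at most $g\le g'$, leaving a locus of dimension $\ge d$, and on that locus $\sum c_i=(g'+d-1)c_0$, so that $\sum\{c_i\}=(g'+d-1)\{c_0\}$ in $CH_0(C)$ and, after translating by $-c_0$, every point of the intersection is rationally equivalent to $(g'+d-1)\{0_A\}$: the \emph{whole} intersection is one suborbit. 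This circumvents your obstruction exactly: the fibre of $\mathrm{Sym}^{g'+d-1}C\to\mathrm{Pic}^{g'+d-1}C$ over $(g'+d-1)c_0$ contributes $d-1$ directions and the moving point $c_0$ contributes the last one, while translation by $-c_0$ keeps the class pinned. With this base case in hand, Remark \ref{trick} gives the general statement uniformly and your separate treatments of $k=g'+d$, of $k=g'+d-1$ with $d\le g'-1$, and of $g=1$ become unnecessary.
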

\begin{proof}
To simplify notation we identify $C$ with its image in $J(C)$. We can assume that $0_A\in C$. Recall that the summation map $\text{Sym}^lC\to J(C)$ has fibers $\mathbb{P}^{l-g'}$ for all $l\geq g'$. Moreover, if $(c_1,\ldots, c_l)$ and $(c_1',\ldots, c_l')$ are such that $\sum c_i=\sum c_i'\in J(C)$, then the zero-cycles $\sum \{c_i\}$ and $\sum \{c_i\}'$ are equal in $CH_0(C)$, and thus in $CH_0(A)$.\\

In light of Remark \href{trick}{\ref{trick}}, it suffices to show that $A^{g'+d-1,\, 0}$ contains a $d$-dimensional suborbit. Consider the following map:
\begin{align*}\psi: C\times C^{g'+d-1}&\to A^{g'+d-1}\\
(c_0,(c_1,\ldots, c_{g'+d-1}))&\mapsto (c_1-c_0,\ldots, c_{g'+d-1}-c_0).\end{align*}

This morphism is generically finite on its image since the restriction of the summation map $A^2\to A$ to $C^2\subset A^2$ is generically finite.
The intersection of the image of $\psi$ with $A^{g'+d-1,\, 0}$ is a $d$-dimensional suborbit. Indeed, given 
$$(c_1-c_0,\ldots, c_{g'+d-1}-c_0)\in \text{Im}(\psi)\cap A^{g'+d-1,\, 0},$$\
we have
$$\sum_{i=1}^{g'+d-1} c_i=(g'+d-1)c_0,$$
so that
$$\sum_{i=1}^{g'+d-1} \{c_i\}=(g'+d-1)\{c_0\}\in CH_0(C).$$
It follows that
$$
\sum_{i=1}^{g'+d-1}\{c_i-c_0\}= (g'+d-1)\{0_A\}\in CH_0(A).$$\end{proof}

\begin{remark}
The previous proposition is by no means optimal. Indeed, it provides a one-dimensional suborbit of degree $3$ for a very general abelian $3$-fold while, as pointed out by a referee, such an abelian variety admits a one-dimensional family of one-dimensional suborbits. To see this, observe that a very general abelian $3$-fold is isogenous to the Jacobian of a quartic plane curve $C$. Projecting from a point $c\in C$ gives a degree $3$ rational map $\varphi_c: C\dashrightarrow \mathbb{P}^1$. Let $Z\subset C\times \text{Sym}^3 C$ be the image of the rational map $C\times \mathbb{P}^1\dashrightarrow C\times \text{Sym}^3C$ taking a generic point $(c,t)\in C\times \mathbb{P}^1$ to $(c,\varphi_c^{-1}(t))$. The image of $Z$ under the map
\begin{align*}C\times &\text{Sym}^3C\longrightarrow \text{Sym}^3A\\
(c,x+y+z)&\mapsto \{3x+c\}+\{3y+c\}+\{3z+c\}\end{align*}
is easily checked to be a surface foliated by positive dimensional suborbits and contained in a fiber of the albanese.\\
\end{remark}

\subsection{Further conjectures}
We believe that Theorem \href{2k-2}{\ref{2k-2}} can be improved. In fact, though Conjecture \href{Vweakconj}{\ref{Vweakconj}} is the main conjecture of \cite{V}, it is not the most ambitious. Voisin proposes to attack Conjecture \href{Vweakconj}{\ref{Vweakconj}} by studying the locus $Z_A$ of positive dimensional normalized orbits of degree $k$:
$$Z_A:=\bigg\{a_1\in A: \exists\; a_2,\ldots, a_{k-1}: \dim\Big|\{a_1\}+\ldots+\{a_{k-1}\}+\Big\{-\sum_{i=1}^k a_i\Big\}\Big|>0\bigg\}.$$
In particular, she suggests to deduce Conjecture \href{Vweakconj}{\ref{Vweakconj}} from the following:
\begin{conjecture}[Voisin, Conj. 6.2 in \cite{V}]\label{norm}
If $A$ is a very general abelian variety then
$$\dim Z_A\leq k-1.$$
\end{conjecture}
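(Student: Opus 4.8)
\emph{Plan of proof.} The first point is that $Z_A$ is a naturally defined subset of $A$ in the sense of \cite{V}: it depends only on the abelian variety structure, so for a locally complete family $\mathcal{A}/S$ the loci $Z_{\mathcal{A}_s}$ fit into a constructible subvariety $\mathcal{Z}\subset\mathcal{A}$ over $S$. The plan is therefore to run Voisin's specialization and projection argument in its original form for subvarieties of $A$ (rather than of $A^k$), where the generic finiteness of the restriction of $p_\lambda$ is automatic (Lemma 2.5 of \cite{V}), so that none of the conditions $(*),(**),(***)$ intervene. Concretely one has: if $\dim Z_B\le d$ for a very general abelian $(g-1)$-fold $B$ regardless of polarization type and $0<d<g-1$, then $\dim Z_A\le d-1$ for a very general abelian $g$-fold $A$. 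Since Theorem \ref{2k-2} already gives $Z_A=\emptyset$ for $\dim A\ge 2k-2$, and the cases $\dim A\le k-1$ are vacuous, this inequality bootstraps the whole statement from a single nontrivial input, namely the bound $\dim Z_A\le k-2$ (equivalently $\operatorname{codim}_A Z_A\ge 2$) for a very general abelian $k$-fold: the estimate $\dim Z_A\le (k-2)-(\dim A-k)$ then follows for all $\dim A\ge k$ and in particular implies $\dim Z_A\le k-1$ everywhere.

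\emph{Attacking the base case.} For the remaining inequality I would argue in the spirit of the proof of Theorem \ref{no(k-1)}. Write $Z_A=\bigcup_i\pr_i(\mathcal{W}_A)=\pr_1(\mathcal{W}_A)$, where $\mathcal{W}_A\subset A^{k,\,0}$ is the union of all positive-dimensional normalized orbits of degree $k$; by Ro\u\i tman this is a countable union of subvarieties, so it suffices to bound $\pr_1$ of each irreducible family $\mathcal{O}\to T$ of such orbits. On each orbit $\mathcal{O}_t$ the form $\omega_k$ restricts to zero for every $\omega\in H^0(A,\Omega^q)$ and every $q\ge 1$ (Proposition \ref{vanish} applied to a zero-parameter family), and over $T$ the form $\omega_k$ pulls back to a form vanishing along the fibers of $\mathcal{O}\to T$. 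One then studies the Gauss map of $\pr_1(\mathcal{O})$: if $\dim\pr_1(\mathcal{O})\ge k-1$, then for very general $s$ the abelian variety $\mathcal{A}_s$ is simple, so $\pr_1(\mathcal{O})$ is not stabilized by a positive-dimensional abelian subvariety, its Gauss map is generically finite on its image, and — via the vector-space lemma used in the proof of Theorem \ref{no(k-1)} — a general tangent space to $\pr_1(\mathcal{O})$ would be forced to have the special form $(T_A)^r_M$. Such a subspace is never totally isotropic for $\omega_k$ (Lemma \ref{absub}), and combining this with the isotropy along the orbit directions and a count of the transverse directions, packaged through the powers $\omega_k^{\lceil\,\cdot\,\rceil}$ exactly as in Lemmas \ref{vanish2} and \ref{surfbound} but now on a $k$-dimensional abelian variety, should produce the desired contradiction (with Corollary \ref{1dimfam} absorbing the borderline case).

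\emph{Where the real difficulty lies.} The crux — and the reason this is a proposal rather than a proof — is that, unlike the orbit directions, the $\pr_1$-directions are \emph{not} $\omega_k$-isotropic in general: already for $k=2$ and a hyperelliptic curve on an abelian surface the two coordinate projections pull $\omega$ back to nonzero forms that merely sum to zero. Controlling $\pr_1$ thus requires exploiting the way the orbit $\mathcal{O}_t$ genuinely moves with $t\in T$, which in turn forces a non-degeneracy (generic finiteness) hypothesis on the covering family $\mathcal{O}\to T$ — precisely the kind of assumption, condition $(*)$, that is the technical heart of Sections \ref{specialization}--\ref{salvaging} of this paper, but which must now be secured for the a priori high-dimensional family of \emph{all} positive-dimensional orbits meeting a general fibre of $\mathcal{Z}\to S$, where the salvaging trick of Section \ref{salvaging} (based on a second elliptic degeneration) is not evidently available. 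I expect that clearing this hurdle, and with it the base case $\dim Z_A\le k-2$ in dimension $k$, is exactly where a new idea is needed; once it is in place, the inductive superstructure described above is routine.
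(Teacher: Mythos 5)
The first thing to say is that this statement is not proved in the paper: it is Voisin's Conjecture 6.2, recorded verbatim as Conjecture \ref{norm} and left open. The paper only observes that it would imply the stronger Conjecture \ref{Vstrongconj} (via Remark \ref{trick}); there is no argument in the text to compare yours against. Your submission is, as you say yourself, a plan rather than a proof, and the gap you name at the end is real and is essentially the whole problem. The inductive superstructure (specialize to $A\sim B\times E$, project to $B$, use that $p_\lambda(Z_{\mathcal{A}_s})$ varies with $s\in S_\lambda(B)$ to drop the dimension of $Z_A$ by one each time $\dim A$ goes up by one) is exactly Voisin's intended route, and the base case in dimension $k$ --- $Z_A\neq A$, or your stronger $\dim Z_A\le k-2$ --- is precisely what is missing and what makes this a conjecture rather than a theorem. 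Your sketch of an attack on the base case does not close it: as you note, the $\pr_1$-directions are not $\omega_k$-isotropic, so the form-counting of Lemmas \ref{vanish2} and \ref{surfbound} does not constrain $\pr_1(\mathcal{O})$, and no substitute argument is supplied.

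Two further cautions on the part you present as routine. First, the inductive step needs more than Lemma 2.5 of \cite{V}: that lemma gives generic finiteness of $p_\lambda$ restricted to the subset $Z_{\mathcal{A}_s}\subset \mathcal{A}_s$, but to conclude $p_\lambda(Z_{\mathcal{A}_s})\subset Z_B$ you must also know that the positive-dimensional orbits in $\mathcal{A}_s^k$ witnessing membership in $Z_{\mathcal{A}_s}$ are not contracted by $p_\lambda:\mathcal{A}_s^k\to B^k$. That is a non-degeneracy condition on subvarieties of $A^k$ --- exactly the species of hypothesis that conditions $(*)$--$(***)$ and Sections \ref{specialization}--\ref{salvaging} exist to secure --- and while it is plausibly easier here (a positive-dimensional suborbit can be contracted by $p_\lambda$ for essentially only one elliptic curve $E_\lambda$), it must be verified uniformly over the family of witnessing orbits and survives only after the usual spreading-out and specialization care; it does not simply "not intervene." Second, the specialization inequality is only usable while $0<d<g-1$ and must be run on an algebraic family $\mathcal{Z}\subset\mathcal{A}$ whose very general fiber contains $Z_{\mathcal{A}_s}$, since $Z_{\mathcal{A}_s}$ itself is a countable union and the fibers over the special loci $S_\lambda$ need not compute $Z$ of a very general abelian variety. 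In short: the reduction to the base case is the right strategy but is stated more cleanly than it can currently be justified, and the base case itself remains open.
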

Voisin shows that this conjecture implies Conjecture \href{Vweakconj}{\ref{Vweakconj}} but it in fact implies the following stronger conjecture:
\begin{conjecture}\label{Vstrong} A very general abelian variety of dimension at least $k+1$ does not have a positive dimensional orbit of degree $k$, i.e. $\mathscr{G}(k)\leq k+1$.\label{Vstrongconj}
\end{conjecture}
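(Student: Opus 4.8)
The plan is to reduce this to the dimension bound of Conjecture~\ref{norm} (applied with $k$ replaced by $k+1$), and then to outline how one might attack that bound directly. For the reduction, suppose a very general abelian $g$-fold $A$ with $g\ge k+1$ carried a positive-dimensional orbit $|\{b_1\}+\cdots+\{b_k\}|$ of degree $k$. In the spirit of Remark~\ref{trick}, for every $c\in A$ the orbit
$$\Big|\{b_1+c\}+\cdots+\{b_k+c\}+\{-kc-b_1-\cdots-b_k\}\Big|$$
has degree $k+1$, is normalized, is positive-dimensional (its fiber over $CH_0(A)$ contains a translate of the original one times a point), and has first coordinate $b_1+c$, which runs over all of $A$ as $c$ does. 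Hence the locus $Z_A$ attached to degree $k+1$ equals $A$, so $\dim Z_A=g\ge k+1>(k+1)-1$, contradicting Conjecture~\ref{norm}. Thus it suffices to prove $\dim Z_A\le k-1$ for $A$ very general, for every $k$.

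To attack this, I would mimic the proof of Voisin's Proposition~4.2 (quoted just before Corollary~\ref{1dimfam}), which says that a closed $W\subset A^k$ with $\omega_k|_W=0$ for every $\omega\in H^0(A,\Omega^q)$ and every $q\ge 1$ has $\dim W\le k-1$. After passing to a component of the incidence variety of positive-dimensional normalized suborbits, one obtains a closed $W\subset A^{k,\,0}$, foliated by positive-dimensional suborbits and dominating $Z_A$ under $\mathrm{pr}_1$; it is then enough to check $\omega_k|_W=0$ for all $q\ge 1$, since that forces $\dim Z_A\le\dim W\le k-1$. For $q=1$ there is nothing to do, as $\omega_k=m^*\omega$ for the summation map $m$ and $W\subset\ker m=A^{k,\,0}$. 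For $q\ge 2$, Proposition~\ref{vanish} of Alzati--Pirola already gives $\omega_k|_W=0$ whenever $q$ exceeds the number of parameters of a covering family of suborbits of $W$, and in particular $\omega_k$ vanishes on the tangent directions to each individual suborbit. What is missing is the vanishing of $\omega_k$ on the directions transverse to the suborbit foliation once this parameter count is at least $2$.

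To try to produce that transverse vanishing I would feed $W$ into the specialization--projection machinery of Sections~\ref{specialization} and~\ref{salvaging}: spread $W$ out into a family $\mathcal{Z}\subset\mathcal{A}^{k,\,0}/S$ over a locally complete family of abelian $g$-folds, note that condition $(**)$ holds automatically for $l\ge 2$ by Lemma~\ref{*implies**} and is propagated through the induction by Proposition~\ref{mainprop}, and repeatedly degenerate and project, using Proposition~\ref{prop1} to record that each image varies in its Chow variety. Every such varying direction should, via Proposition~\ref{vanish} applied to the resulting one-parameter family of suborbits, kill $\omega_k(v,\,\cdot\,)$ for the corresponding tangent vector $v$, and the hope would be that enough of these relations accumulate to annihilate $\omega_k$ on all of $T_{W,z}$.

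The hard part --- and the reason this stays conjectural --- is precisely that this accumulation does not close. Carrying the induction all the way down to a very general abelian surface $B$ yields only a normalized suborbit of relative dimension $k$ inside $B^{k,\,0}$, still foliated by positive-dimensional suborbits, and for $k\ge 3$ one has $k\le 2k-3$, so Corollary~\ref{absurfbound} is not contradicted; this is exactly the numerical reason the method of the present paper plateaus at $\mathscr{G}(k)\le 2k-2$. Pushing to $\mathscr{G}(k)\le k+1$ therefore requires a genuinely new input controlling the transverse-to-foliation behaviour of $\omega_k$ on $W$ --- equivalently a direct proof that $\dim Z_A\le k-1$ --- which, as Voisin notes in~\cite{V}, would itself follow from a further conjecture formulated there. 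Supplying that missing vanishing is the main obstacle.
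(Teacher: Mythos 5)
Your argument coincides with the paper's own treatment: this statement is left as a conjecture there as well, and the paper's only content about it is precisely your reduction to Conjecture~\ref{norm} --- the translation device of Remark~\ref{trick} showing that a positive-dimensional orbit of degree $k$ would sweep the corresponding locus $Z_A$ (for degree $k+1$, in your indexing) out to all of $A$, violating the conjectured bound. Your closing diagnosis is likewise consistent with the paper: the bound $\dim Z_A\le k-1$ is not proved there, and the specialization--projection induction indeed only yields a $k$-dimensional suborbit in $B^{k,\,0}$, which Corollary~\ref{absurfbound} does not exclude, so the statement remains open.
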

To see how Conjecture \ref{Vstrong} follows from Conjecture \ref{norm}, suppose that $A$, a very general abelian variety of dimension $k$, has a normalized positive dimensional orbit $|\{a_1\}+\ldots+\{a_{k-1}\}|$. By Remark \ref{trick}, for any $a\in A$, the normalized orbit $|\{(k-1)a\}+\{a_1-a\}+\ldots+\{a_{k-1}-a\}|$ is positive dimensional. It follows that $Z_A=A$ and so $\dim Z_A=k>k-1$.\\

As mentioned above, the results of Pirola and Alzati-Pirola give $\mathscr{G}(2)\leq 3$ and $\mathscr{G}(3)\leq 4$. Our main theorem provides the bound $\mathscr{G}(4)\leq 6$. An interesting question is to determine if $\mathscr{G}(4)\leq 5$. This would provide additional evidence in favor of Conjecture \href{Vstrongconj}{\ref{Vstrongconj}}.

 \subsection*{Aknowledgements} This paper owes a lot to the work of Alzati, Pirola, and Voisin. I thank M. Nori and A. Beilinson for countless useful and insightful conversations as well as for their support. I would also like to extend my gratitude to C. Voisin for bringing my attention to this circle of ideas by writing \cite{V}, and for kindly answering some questions about the proof of Theorem 1.1 from that article. Finally, I am grateful for the advice of the referees which considerably improved the exposition.

\bibliography{biblio}

\end{document}